\documentclass[11pt]{article}
\usepackage{amssymb}
\usepackage{amsmath}
\usepackage{mathrsfs}
\usepackage{graphics}
\usepackage{graphicx}
\usepackage{xcolor}
\usepackage{subfigure}
\usepackage[T1]{fontenc}
\usepackage{latexsym,amssymb,amsmath,amsfonts,amsthm}\usepackage{txfonts}
\topmargin =0mm \headheight=0mm \headsep=0mm \textheight =220mm
\textwidth =160mm \oddsidemargin=0mm\evensidemargin =0mm
\sloppy \brokenpenalty=10000

\newcommand{\be}{\begin{eqnarray}}
\newcommand{\ben}{\begin{eqnarray*}}
\newcommand{\en}{\end{eqnarray}}
\newcommand{\enn}{\end{eqnarray*}}

\newtheorem{theorem}{Theorem}[section]
\newtheorem{lemma}{Lemma}[section]
\newtheorem{prp}[theorem]{Proposition}
\newtheorem{thm}[theorem]{Theorem}

\newtheorem{dfn}{Definition}[section]

\newtheorem{remark}{Remark}

\begin{document}
\renewcommand{\theequation}{\arabic{section}.\arabic{equation}}
\begin{titlepage}
\title{\bf On the small time asymptotics of 3D stochastic primitive equations
}
\author{Zhao Dong$^{1,2}$  Rangrang Zhang$^{3,}$\thanks{Corresponding author.}\\
{\small $^1$ RCSDS, Academy of Mathematics and Systems Science, Chinese Academy of Sciences, Beijing 100190, China.}\\
{\small $^2$  School of Mathematical Sciences, University of Chinese Academy of Sciences.}\\
{\small $^3$ Department of  Mathematics,
Beijing Institute of Technology, Beijing, 100081, China.}\\
({\sf dzhao@amt.ac.cn}, {\sf rrzhang@amss.ac.cn} )}
\date{}
\end{titlepage}
\maketitle

\noindent\textbf{Abstract}:\ \ In this paper, we establish a small time large deviation principle for the strong solution of 3D stochastic primitive equations driven by multiplicative noise, which not only involves the study of small noise, but also the challenging nonlinear drift terms.

%
\noindent\textbf{AMS Subject Classification}:\ \  60F10, 60H15, 60G40.

\noindent\textbf{Keywords}:\ \ primitive equation; small time asymptotics; large deviations.
\section{Introduction}
In this paper, we are concerned with the small time asymptotics of the primitive equations, which are a basic model in the study of large scale oceanic and atmospheric dynamics. This model forms the analytical core of the most general circulation models, which has been intensively investigated because of its challenging nonlinear terms and anisotropic structure (see \cite{L-T-W-1,L-T-W-2,JP} and references therein). The mathematical study of the primitive equations started in a series of pioneer articles by Lions et al. in the early 1990s (see \cite{L-T-W-1,L-T-W-2,L-T-W-3,L-T-W-4}), where
they defined the notions of weak and strong solutions and also showed the global existence of weak solutions. Taking the advantage of the fact that the pressure is essentially two dimensional, Cao and Titi obtained the global well-posedness for the primitive equations in three dimensional case in \cite{C-T-1}. Lately, Hu et al.
\cite{H-T-Z} proved the global existence and uniqueness of strong solutions to the primitive equations under the small depth hypothesis.

\
Due to the existence of some uncertainties, it is natural and reasonable to consider the primitive equations in random case (see \cite{F-H,Guo,M-M,P} and the references therein). In the past two decades, there are numerous works about the stochastic primitive equations.  Guo and Huang \cite{Guo} obtained
the existence of universal random attractor of strong solution to 3D stochastic primitive equations under the assumptions that the momentum
equation is driven by an additive stochastic forcing and the
thermodynamical equation is under a fixed heat source. Debussche et al. \cite{D-G-T-Z} established the global well-posedness of strong solution of the 3D primitive
equations driven by a nonlinear, multiplicative white noise. However, the uniqueness of weak solutions of the 3D primitive equations is open. Based on \cite{D-G-T-Z}, Dong et al. \cite{RR} showed that all special weak solutions obtained by Galerkin approximations shared the same invariant measure. Moreover, Dong and Zhang \cite{RR-1} made Markov selection of weak solutions of the 3D primitive equations and further established that the selected Markov solutions have $\mathcal{W}-$strong Feller property. The Freidlin-Wentzell's large deviation principles (LDP) for the primitive equations also have attracted a lot of people's attention. We mention some of them. Gao and Sun \cite{G-S}  proved that the LDP holds for weak solutions of the 2D stochastic primitive equations. Dong et al. \cite{RR-LDP} obtained the same result for the strong solution of 3D stochastic primitive equations.

The purpose of this paper is to study the small time asymptotics for the strong solution (in the sense of probability) of 3D stochastic primitive equations, which describes the behavior of the fluid at very small time. Specifically, we focus on the limiting behavior of the solution in time interval $[0,t]$ as $t$ goes to zero. An important motivation to consider such problem comes from Varadhan identity
\begin{eqnarray*}
\lim_{t\rightarrow 0}2t\log P(Y(0)\in B,\ Y(t)\in C)=-d^2(B,C),
\end{eqnarray*}
where $d$ is an appropriate Riemann distance associated with the diffusion generated by $Y$.
 The mathematical study of the small time asymptotics for finite dimensional processes was initiated by Varadhan \cite{V}. For the infinite dimensional diffusion processes, the readers can refer to \cite{A-K, A-Z, F-Z, H-R, ZTS} and references therein.

Up to now, there are several works about the small time asymptotics for fluid dynamical models. For example, Xu and Zhang \cite{X-Z} established the small time asymptotics of 2D Navier-Stokes equations in the state space $C([0,\Upsilon];H)$. Liu et al. \cite{Liu} studied 2D quasi-geostrophic equations in the subcritical case, where they obtained the small time asymptotics in the state space $L^{\infty}([0,\Upsilon];H)$ and $L^{\infty}([0,\Upsilon];H^{-\frac{1}{2}})$ for regular initial value in $H^{\delta}\cap L^p$ and $L^p$, respectively. In this paper, we devote to proving the small time asymptotics of the 3D stochastic primitive equations. Apart from the above motivations, its challenging nonlinear drift terms also stimulate our interest. For the small time asymptotics of the strong solution of the 3D stochastic primitive equations, the state space is chosen to be $C([0,\Upsilon];V)$, which requires some higher Sobolev norm estimates. For instance, a key step during the proof process is to show that the probability of the strong solution staying outside an energy ball in $V\subset H^1$ is exponentially small. It's difficult to achieve this directly like 2D Navier-Stokes equations or 2D quasi-geostrophic equations, since the nonlinear terms of 3D primitive equations have no cancellation property in $V$ and $L^p-$norm are not strong enough to control them  (see Theorem \ref{thm-2}). Fortunately, we can overcome this difficulty by introducing appropriate stopping times. Under this circumstance, we have to make additional estimates on those stopping times. These are highly nontrivial. All details are presented in Sect. \ref{Sect3.2}.

\

This paper is organized as follows. The mathematical framework of 3D stochastic primitive equations is introduced in Sect. 2. In Sect. 3, the main result of this paper is stated and the proof of small time asymptotics of 3D stochastic primitive equations is given.

\section{The mathematical framework}
Let $D$ be a bounded open domain with smooth boundary in $\mathbb{R}^2$. Set $\mathcal{O}=D\times (-1,0)$. Let $\Upsilon>0$ and $[0, \Upsilon]$ be an time interval. Consider the following
3D primitive equations on $\mathcal{O}\times [0,\Upsilon]$ driven by a stochastic forcing in a Cartesian system
\begin{eqnarray}\label{eq-1}
\frac{\partial \mathbf{v}}{\partial t}+(\mathbf{v}\cdot \nabla)\mathbf{v}+\theta\frac{\partial \mathbf{v}}{\partial z}+f{k}\times \mathbf{v} +\nabla P +L_1\mathbf{v} &=&\psi_1(t,\mathbf{v},T)\frac{dW_1}{dt},\\
\label{eq-2}
\partial_{z}P+T&=&0,\\
\label{eq-3}
\nabla\cdot \mathbf{v}+\partial_{z}\theta&=&0, \\
\label{eq-4}
\frac{\partial T}{\partial t}+(\mathbf{v}\cdot\nabla)T+\theta\frac{\partial T}{\partial z}+L_2 T&=&\psi_2(t,\mathbf{v},T)\frac{dW_2}{dt},
\end{eqnarray}
where the horizontal velocity field $\mathbf{v}=(v_{1},v_{2})$, the vertical velocity field\ $\theta$, the temperature\ $T$ and the pressure\ $P$ are all unknown functionals. $f$ is the Coriolis parameter. ${k}$ is vertical unit vector. $W_1$ and $W_2$ are two independent cylindrical Winner processes on $U_1$ and $U_2$, respectively. $U_1$ and $U_2$ will be given in Sect. \ref{sect}.
$\nabla=(\partial x,\partial y)$, $\Delta=\partial^{2}_{x}+\partial^{2}_{y}$. The viscosity and the heat diffusion operators $L_1$ and $L_2$ are given by
\begin{eqnarray*}
L_1\mathbf{v}&=&-A_1\Delta \mathbf{v} -A_2\frac{\partial^2 \mathbf{v}}{\partial z^2},\\
L_2T&=&-K_1\Delta T -K_2\frac{\partial^2 T}{\partial z^2},
\end{eqnarray*}
where $A_1$, $A_2$ are positive molecular viscosities and $K_1$, $K_2$ are positive conductivity constants. Without loss of generality, we assume that
$$
A_1=A_2=K_1=K_2=1.
$$
We impose the same boundary conditions as \cite{D-G-T-Z},
\begin{eqnarray}\label{eq1}
 \partial_{z}\mathbf{v}=0, \ \theta=0,\  \partial_{z}T=0  & &  {\rm {on}} \ \Gamma_{\textit{u}}:=\textit{D}\times\{0\},\\
 \label{eq2}
 \partial_{z}\mathbf{v}=0,\  \theta=0,\  \partial_{z}T=0        & &   {\rm {on}}\ \Gamma_{\textit{b}}:=\textit{D}\times\{-1\}\\
 \label{eq3}
 \mathbf{v}=0,\ \frac{\partial T}{\partial \mathbf{n}}=0  & &   {\rm {on}}\   \Gamma_{l}:=\partial \textit{D}\times [-1,0],
 \end{eqnarray}
 where $\mathbf{n}$ is the outward normal vector to $\Gamma_{l}$.

Integrating (\ref{eq-3}) from $-1$ to $z$ and using  (\ref{eq1}), (\ref{eq2}), we have
\begin{equation}
\theta(t,x,y,z):=\Phi(\mathbf{v} )(t,x,y,z)=-\int^{z}_{-1}\nabla\cdot \mathbf{v} (t,x,y,z')dz',
\end{equation}
moreover,
\[
\int^{0}_{-1}\nabla\cdot {\mathbf{v}}  dz=0.
\]
Integrating (\ref{eq-2}) from $-1$ to $z$, set $p_{b}$ be a certain unknown function at $\Gamma_{b}$ satisfying
\begin{eqnarray}\label{e-51}
P(x,y,z,t)= p_{b}(x,y,t)-\int^{z}_{-1} T(x,y,z',t) dz'.
\end{eqnarray}
Then, (\ref{eq-1})-(\ref{eq3}) can be rewritten as
 \begin{eqnarray}\label{eq5-1}
&\frac{\partial \mathbf{v}}{\partial t}+(\mathbf{v}\cdot \nabla)\mathbf{v}+\Phi(\mathbf{v})\frac{\partial \mathbf{v}}{\partial z}+f{k}\times \mathbf{v} +\nabla p_{b}-\int^{z}_{-1}\nabla T dz' + L_1\mathbf{v} =\psi_1(t,\mathbf{v},T)\frac{dW_1}{dt},&\\
\label{eq-6-1}
&\frac{\partial T}{\partial t}+(\mathbf{v}\cdot\nabla)T+\Phi(\mathbf{v})\frac{\partial T}{\partial z}+ L_2T=\psi_2(t,\mathbf{v},T)\frac{dW_2}{dt},&\\
\label{eq-7-1}
&\int^{0}_{-1}\nabla\cdot \mathbf{v}  dz=0.&
\end{eqnarray}
\begin{eqnarray}\label{eq-8-1}
 \partial_{z}\mathbf{v}=0,\ \partial_{z}T=0    &&      {\rm on}\ \Gamma_{\textit{u}},\\
 \label{eq-9-1}
 \partial_{z}\mathbf{v}=0,\ \partial_{z}T=0     &&   {\rm on}\ \Gamma_{\textit{b}},\\
 \label{eq-10-1}
 \mathbf{v}=0,\ \frac{\partial T}{\partial \mathbf{n}}=0      &&   {\rm on}\ \Gamma_{l}.
 \end{eqnarray}
Denote $Y =({\mathbf{v}}, T)$ and the initial value conditions
 \begin{equation}\label{eq-11-1}
 Y(0)=({\mathbf{v}}(0), T(0))=\gamma:=(\gamma_1,\gamma_2).
 \end{equation}
\subsection{Some functional spaces}\label{sect}
Let $\mathcal{L}(K_1;K_2)$ (resp. $\mathcal{L}_2(K_1;K_2)$) be the space of bounded (resp. Hilbert-Schmidt) linear operators from the Hilbert space $K_1$ to $K_2$, whose norm is denoted by $\|\cdot\|_{\mathcal{L}(K_1;K_2)}$(resp. $\|\cdot\|_{\mathcal{L}_2(K_1;K_2)})$. For $p\in \mathbb{Z}^+$, set
\begin{eqnarray*}
 |\phi|_p=\left\{
            \begin{array}{ll}
              \Big(\int_{\mathcal{O}}|\phi(x,y,z)|^pdxdydz\Big)^{\frac{1}{p}}, & \phi\in L^p(\mathcal{O}), \\
               \Big(\int_{D}|\phi(x,y)|^pdxdy\Big)^{\frac{1}{p}}, &  \phi\in L^p(D).
            \end{array}
          \right.
\end{eqnarray*}
In particular, $|\cdot|$ and $(\cdot,\cdot)$ represent norm and inner product of $L^2(\mathcal{O})$ (or $L^2(D)$), respectively. For the classical Sobolev space $H^{m}(\mathcal{O})$, $m\in \mathbb{N}_+$,
\begin{equation}\notag
\left\{
  \begin{array}{ll}
    H^{m}(\mathcal{O})=\Big\{Y\Big| \partial_{\alpha}Y\in (L^2(\mathcal{O}))^3\ {\rm for} \ |\alpha|\leq m\Big\},&  \\
    |Y|^2_{H^{m}(\mathcal{O})}=\sum_{0\leq|\alpha|\leq m}|\partial_{\alpha}U|^2. &
  \end{array}
\right.
\end{equation}
It's known that $(H^{m}(\mathcal{O}), |\cdot|_{H^{m}(\mathcal{O})})$ is a Hilbert space. $|\cdot|_{H^p(D)}$ stands for the norm of $H^p(D)$ for $p\in \mathbb{Z}^+$.

In the following, we adopt the framework of (\ref{eq5-1})-(\ref{eq-11-1}) in \cite{D-G-T-Z}. Define
 \begin{eqnarray}
H:=\Big\{({\mathbf{v}},T)\in (L^2(\mathcal{O}))^3:\nabla\cdot \int^0_{-1}\mathbf{v}dz'=0 \ {\rm in}\ D,\ \mathbf{n} \cdot \int^0_{-1}\mathbf{v}dz'=0 \ {\rm on}\  \partial D, \ \int_{\mathcal{O}}T dxdydz=0\Big\}.
 \end{eqnarray}
$H$ is equipped with $L^2(\mathcal{O})$ inner product. Define $P_H$ to be the Leray type projection operator from $(L^2(\mathcal{O}))^3$ onto $H$. Consider the subspace of $(H^1(\mathcal{O}))^3$,
 \begin{eqnarray}
V:=\Big\{(\mathbf{v},T)\in (H^1(\mathcal{O}))^3:\nabla\cdot \int^0_{-1}\mathbf{v}dz'=0 \ {\rm in}\ D,\ \mathbf{v}=0 \ {\rm on}\  \Gamma_l , \ \int_{\mathcal{O}}T dxdydz=0\Big\}.
 \end{eqnarray}
$V$ is equipped with the inner product
 \begin{eqnarray*}
 ((Y,\tilde{Y}))&:=&((\mathbf{v},\tilde{\mathbf{v}}))_1+((T,\tilde{T}))_2,\\
 ((\mathbf{v},\tilde{\mathbf{v}}))_1&:=&\int_{\mathcal{O}}(\nabla \mathbf{v}\cdot \nabla \tilde{\mathbf{v}}+\partial_z \mathbf{v}\cdot \partial_z \tilde{\mathbf{v}})dxdydz,\\
 ((T,\tilde{T}))_2&:=&\int_{\mathcal{O}}(\nabla T\cdot \nabla \tilde{T}+\partial_z T \partial_z \tilde{T})dxdydz,
 \end{eqnarray*}
 and take $\|\cdot\|=\sqrt{((\cdot, \cdot))}$. For the individual components of the solution $Y=({\mathbf{v}},T)$, for convenience, $|\cdot|$ and $\|\cdot\|$ are also used for ${\mathbf{v}}=(v_1, v_2)$ and $T$. Moreover, for $p\geq 1$, let
\[
|{\mathbf{v}}|_{p}:=\left(\int_{\mathcal{O}}(|v_1|^p+|v_2|^p)dxdydz\right)^{\frac{1}{p}}.
\]
The principle linear portion of the equation is defined by
\begin{eqnarray*}
AY:=P_H \left(                 
  \begin{array}{c}   
   L_1 {\mathbf{v}}\\  
    L_2 T \\  
  \end{array}
\right) \quad {\rm{for}} \ Y=({\mathbf{v}},T)\in D(A)
\end{eqnarray*}
where
\begin{eqnarray*}
D(A)&=&\Big\{Y=({\mathbf{v}},T)\in (H^2(\mathcal{O}))^3: \partial_z {\mathbf{v}}=\partial_z T=0 \ \ {\rm{on}} \ \Gamma_u,\ \frac{\partial T}{\partial \mathbf{n}}=0 \ \ {\rm{on}}\ \Gamma_l,\\
&& \ \partial_z {\mathbf{v}}=\partial_z T=0 \ \ {\rm{on}} \ \Gamma_b\Big\}.
\end{eqnarray*}
It is well-known that $A$ is a self-adjoint and positive definite operator. Due to the regularity properties of the Stokes problem of geophysical fluid dynamics, we have $|AY|\cong |Y|_{H^2(\mathcal{O})}$, see \cite{Z}.\\
For $Y=({\mathbf{v}},T)$, $\tilde{Y}=(\tilde{{\mathbf{v}}},\tilde{T})\in D(A)$, define
\begin{eqnarray*}
B(Y, \tilde{Y}):=P_H \left(                 
  \begin{array}{c}   
   (\mathbf{v}\cdot \nabla) \tilde{\mathbf{v}}+\Phi(\mathbf{v})\frac{\partial \tilde{\mathbf{v}}}{\partial z}\\  
    (\mathbf{v}\cdot \nabla) \tilde{T}+\Phi(\mathbf{v})\frac{\partial \tilde{T}}{\partial z} \\  
  \end{array}
\right).
\end{eqnarray*}
For the Coriolis forcing term and the second component of pressure in (\ref{e-51}), define
\begin{eqnarray*}
G(Y):=P_H \left(                 
  \begin{array}{c}   
   fk\times {\mathbf{v}}-\int^z_{-1}\nabla T dz'\\  
    0\\  
  \end{array}
\right) \quad {\rm{for}} \ Y=({\mathbf{v}},T)\in V.
\end{eqnarray*}
For stochastic terms, we shall fix a single stochastic basis $\mathcal{T}:=(\Omega, \mathcal{F}, \{\mathcal{F}_t\}_{t\geq 0}, P, W)$.
\[
W=\left(                 
  \begin{array}{c}   
    W_1\\  
    W_2 \\  
  \end{array}
\right)
 \]
 is a cylindrical Brownian motion with the form $W(t,\omega)=\sum_{i\geq1}r_iw_i(t,\omega)$, where $\{r_i\}_{i\geq 1}$ is a complete orthonormal basis of a Hilbert space $U=U_1\times U_2$, $U_1$ and $U_2$ are separable Hilbert spaces, $\{w_i\}_{i\geq1}$ is a sequence of independent one-dimensional standard Brownian motions on $(\Omega, \mathcal{F}, \{\mathcal{F}_t\}_{t\geq 0}, P)$.
Define
\begin{equation}\notag
\psi(t,Y):=P_H\left(                 
  \begin{array}{cc}   
   \psi_1(t,Y) & 0 \\  
   0 & \psi_2(t,Y) \\  
  \end{array}
\right).
\end{equation}
Using the above operators, we reformulate (\ref{eq5-1})-(\ref{eq-11-1}) as the following abstract evolution system
\begin{eqnarray}\label{equ-7}
\left\{
  \begin{array}{ll}
    dY(t)+AY(t)dt+B(Y(t),Y(t))dt+G(Y(t))dt=\psi(t,Y(t)) dW(t), \\
    Y(0)=\gamma.
  \end{array}
\right.
\end{eqnarray}
\subsection{Hypothesis}
Recall the definition of strong solution to (\ref{equ-7}) stated in \cite{D-G-T-Z}.
\begin{dfn}\label{dfn-3}
Let $\mathcal{T}=(\Omega, \mathcal{F}, \{\mathcal{F}_t\}_{t\geq 0}, P, W)$ be a fixed stochastic basis and suppose that $\gamma$ is a $V-$valued $\mathcal{F}_0-$measurable random variable with $E\|\gamma\|^2<\infty$.
$Y$ is called a strong solution of (\ref{equ-7}) if $Y(\cdot)$ is an $\mathcal{F}_t-$adapted process in $V$,  such that
\[
Y(\cdot)\in L^2(\Omega; C([0,\Upsilon];V))\bigcap L^2(\Omega; L^2([0,\Upsilon];D(A)))\quad \forall T>0,
\]
and for every $t\geq 0$,
\begin{eqnarray}\label{eq-5}
 Y(t)+\int^{t}_{0}\Big(AY+B(Y,Y)+G(Y)\Big)ds=\gamma+\int^{t}_{0}\psi(s,Y(s)) dW(s),
 \end{eqnarray}
 holds in $V'$, $P-$ a.s..
\end{dfn}
In order to obtain the global well-posedness of strong solution to (\ref{equ-7}), the following Hypothesis H is required by \cite{D-G-T-Z}.

Given any pair of Banach spaces $\mathcal{X}$ and $\mathcal{Y}$,  $Bnd_u(\mathcal{X}, \mathcal{Y})$ stands for the collection of all continuous mappings $\psi: [0,\infty)\times \mathcal{X} \rightarrow \mathcal{Y}$ such that
\[
\|\psi(t,x)\|_{\mathcal{Y}}\leq L(1+\|x\|_{\mathcal{X}}) \quad x\in \mathcal{X}, \ t\geq 0,
\]
where the constant $L$ is independent of $t$. We say $\psi\in Lip_u(\mathcal{X},\mathcal{Y})$, if in addition,
\[
\|\psi(t,x)-\psi(t,y)\|_{\mathcal{Y}}\leq L\|x-y\|_{\mathcal{X}} \quad x,y\in \mathcal{X}, \ t\geq 0.
\]
\begin{description}
  \item[Hypothesis H] $\psi: [0,\infty)\times H\rightarrow \mathcal{L}_2(U; H)$ satisfies
\begin{eqnarray*}
\psi\in Lip_u(H, \mathcal{L}_2(U; H))\cap Lip_u(V, \mathcal{L}_2(U; V))\cap Bnd_u(V, \mathcal{L}_2(U; D(A))).
\end{eqnarray*}
\end{description}

The following result is given by Theorem 2.1 in \cite{D-G-T-Z}.
\begin{prp}\label{prp-1}
For any  $\mathcal{F}_0-$measurable $\gamma\in L^2(\Omega;V)$, under Hypothesis H, there exists a unique global solution $Y$ of (\ref{equ-7}) with $Y(0)=\gamma$.
\end{prp}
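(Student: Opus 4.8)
The plan is to follow the spectral Galerkin scheme combined with anisotropic a priori estimates of Cao--Titi type, adapted to the stochastic framework as in \cite{D-G-T-Z}. First I would let $\{e_i\}$ be the orthonormal eigenbasis of the self-adjoint positive operator $A$, set $H_n=\mathrm{span}\{e_1,\dots,e_n\}$ with projection $P_n$, and consider the finite-dimensional system
\begin{eqnarray*}
dY_n + AY_n\,dt + P_nB(Y_n,Y_n)\,dt + P_nG(Y_n)\,dt = P_n\psi(t,Y_n)\,dW,\qquad Y_n(0)=P_n\gamma .
\end{eqnarray*}
The drift is locally Lipschitz on $H_n$ and, by the $Bnd_u$ part of Hypothesis H, the diffusion has linear growth, so this SDE admits a unique global solution $Y_n$.

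The heart of the proof is uniform-in-$n$ bounds. Applying the It\^o formula to $|Y_n|^2$ and using the cancellations $(B(Y,Y),Y)=0$, $(G(Y),Y)=0$ together with the $Lip_u(H,\mathcal L_2(U;H))$ hypothesis gives a uniform bound in $L^2(\Omega;C([0,\Upsilon];H))\cap L^2(\Omega;L^2(0,\Upsilon;V))$. The difficult step is the $V$-level ($H^1$) estimate: since the 3D primitive nonlinearity has no cancellation property in $V$, one must first bootstrap through the Cao--Titi $L^6$ bound on $\mathbf v_n$ (exploiting that the pressure is two-dimensional, splitting $\mathbf v$ into its barotropic/baroclinic parts), and only then run the It\^o formula for $\|Y_n\|^2$, using the $L^6$ bound with anisotropic Ladyzhenskaya/Agmon inequalities to absorb $\langle B(Y_n,Y_n),AY_n\rangle$ into the dissipation, while $Lip_u(V,\mathcal L_2(U;V))$ controls the martingale term; a $D(A)$ bound then follows using $Bnd_u(V,\mathcal L_2(U;D(A)))$. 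To make the chain rigorous in the stochastic setting I would localize with stopping times $\tau_n^R=\inf\{t\ge 0:\|Y_n(t)\|\ge R\}$, close the estimates on $[0,\tau_n^R]$ via Gronwall and Burkholder--Davis--Gundy, and send $R\to\infty$. I expect this $V$-level estimate to be the main obstacle, precisely as flagged in the introduction.

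Given these bounds, tightness of the laws of $\{Y_n\}$ in $C([0,\Upsilon];H)\cap L^2(0,\Upsilon;V)$ follows from an Aubin--Lions/Simon compactness embedding (the $L^2(0,\Upsilon;D(A))$ bound plus a fractional-in-time bound read off from the equation) together with Prokhorov's theorem; Skorokhod's representation then produces a.s.-convergent copies whose limit satisfies \eqref{eq-5}, the nonlinear term passing to the limit by the strong $L^2(0,\Upsilon;V)$ convergence. Finally, pathwise uniqueness is obtained by applying the It\^o formula to $\|Y^{(1)}-Y^{(2)}\|^2$ for two solutions, using the $H^1$ estimates and the Lipschitz hypothesis on $\psi$ to close a Gronwall inequality with an a.s.-finite random coefficient; the Yamada--Watanabe theorem then upgrades weak existence plus pathwise uniqueness to the unique probabilistically strong solution with $Y(0)=\gamma$.
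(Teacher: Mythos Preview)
The paper does not actually prove this proposition; it simply records it as Theorem~2.1 of \cite{D-G-T-Z}. Your sketch is essentially the strategy carried out in that reference (Galerkin approximations, Cao--Titi-type anisotropic estimates localized by stopping times, tightness/Skorokhod, pathwise uniqueness plus Yamada--Watanabe), so there is nothing substantive to compare at the level of the present paper.

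Two minor corrections to your outline: first, $(G(Y),Y)\neq 0$ in general, since $G$ contains the buoyancy term $-\int_{-1}^z\nabla T\,dz'$ in addition to the Coriolis force; only the latter is orthogonal to $\mathbf v$. This does not affect the argument, as the offending term is linear and easily absorbed by Young's inequality. Second, \cite{D-G-T-Z} (and the later estimates in this very paper, cf.\ Sect.~\ref{Sect3.2}) bootstrap through $|\mathbf v|_4^4$ and $|\partial_z Y|^2$ rather than an $L^6$ bound before closing the $V$-estimate; your $L^6$ route is the original Cao--Titi device and would also work, but if you want to match the reference you should use the $L^4$/$\partial_z$ hierarchy.
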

Moreover, we recall both Theorem 3.1 and Lemma 5.1 in \cite{D-G-T-Z} satisfied by $Y$. For the readers' convenience, we only state Theorem 3.1 in \cite{D-G-T-Z} here.
\begin{prp}\label{prp-2}
 If for some $p\geq 2$, $\gamma\in L^p(\Omega; V)$, then, under Hypothesis H, for any $\Upsilon>0$, there exists constant $C(p)$ such that
\begin{eqnarray}\label{eq-9}
E\Big(\sup_{t\in[0,\Upsilon]}|Y|^p+\int^\Upsilon_0 \|Y\|^2|Y|^{p-2}dt\Big)\leq C(p),
\end{eqnarray}
and
\begin{eqnarray}\label{eq-10}
E\Big(\int^\Upsilon_0\|Y\|^2dt\Big)^{\frac{p}{2}}\leq C(p).
\end{eqnarray}
\end{prp}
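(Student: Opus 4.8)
This statement is Theorem 3.1 of \cite{D-G-T-Z}, and the plan is to reproduce the a priori energy estimate behind it, working at the $L^2$ level where the primitive-equations nonlinearity is orthogonal. First I would apply It\^o's formula to $t\mapsto|Y(t)|^2$ in (\ref{equ-7}); using $(AY,Y)=\|Y\|^2$ this gives
\[
d|Y|^2+2\|Y\|^2\,dt+2\big(B(Y,Y),Y\big)\,dt+2\big(G(Y),Y\big)\,dt=2\big(Y,\psi(t,Y)\,dW\big)+\|\psi(t,Y)\|_{\mathcal{L}_2(U;H)}^2\,dt.
\]
The three ingredients that close this are: the cancellation $\big(B(Y,Y),Y\big)=0$; the bound $|(G(Y),Y)|\le\tfrac12\|Y\|^2+C|Y|^2$, which follows from $(fk\times\mathbf{v},\mathbf{v})=0$, the estimate $|\int_{-1}^z\nabla T\,dz'|\le|\nabla T|$ and Young's inequality; and $\|\psi(t,Y)\|_{\mathcal{L}_2(U;H)}^2\le 2L^2(1+|Y|^2)$, which is Hypothesis H (the $Lip_u(H,\mathcal{L}_2(U;H))$ part, hence $Bnd_u$). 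Together these give
\[
d|Y|^2+\|Y\|^2\,dt\le C\big(1+|Y|^2\big)\,dt+2\big(Y,\psi(t,Y)\,dW\big).
\]

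The only model-specific point is $\big(B(Y,Y),Y\big)=0$. Integrating by parts the convective terms $(\mathbf{v}\cdot\nabla)\mathbf{v}+\Phi(\mathbf{v})\partial_z\mathbf{v}$ against $\mathbf{v}$ (and, with the identical computation, $(\mathbf{v}\cdot\nabla)T+\Phi(\mathbf{v})\partial_zT$ against $T$), the horizontal part contributes $-\tfrac12\int_{\mathcal{O}}(\nabla\cdot\mathbf{v})|\mathbf{v}|^2$ and the vertical part $+\tfrac12\int_{\mathcal{O}}(\nabla\cdot\mathbf{v})|\mathbf{v}|^2$, so they cancel; the boundary terms at $z=0$ and $z=-1$ vanish because $\Phi(\mathbf{v})$ vanishes there (the lower endpoint of its defining integral, and the constraint $\int_{-1}^0\nabla\cdot\mathbf{v}\,dz=0$), and the lateral term vanishes because $\mathbf{v}=0$ on $\Gamma_l$. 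This is precisely the cancellation that, as emphasised in the introduction, has no analogue in the $V$-norm, which is why the estimate must be run in $H$.

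Next, for general $p\ge2$, I would apply It\^o's formula to $r\mapsto r^{p/2}$ at $r=|Y|^2$: the first-order term reproduces the inequality above multiplied by $\tfrac p2|Y|^{p-2}$, and the second-order term is at most $C_p|Y|^{p-2}\|\psi(t,Y)\|_{\mathcal{L}_2(U;H)}^2\le C_p(1+|Y|^p)$ (using Hypothesis H and $|Y|^{p-2}\le1+|Y|^p$), which yields
\[
d|Y|^p+p\|Y\|^2|Y|^{p-2}\,dt\le C_p\big(1+|Y|^p\big)\,dt+p|Y|^{p-2}\big(Y,\psi(t,Y)\,dW\big).
\]
To make the next steps rigorous I would localise with the stopping times $\tau_N:=\inf\{t\ge0:|Y(t)|\ge N\}$, which are finite a.s.\ and increase to $\infty$ because $t\mapsto Y(t)$ is continuous in $H$, carry out the estimates on $[0,\Upsilon\wedge\tau_N]$ (where all the relevant moments are finite), and finish by Fatou's lemma. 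Integrating on $[0,t\wedge\tau_N]$, taking $\sup_{t\le\Upsilon}$ and then expectations, the martingale term is controlled by the Burkholder--Davis--Gundy inequality: factoring $\sup_t|Y|^p$ out of the quadratic variation and using $\|\psi\|_{\mathcal{L}_2(U;H)}^2\le C(1+|Y|^2)$ gives a bound $\tfrac12 E\sup_{t\le\Upsilon\wedge\tau_N}|Y|^p+C_pE\int_0^\Upsilon(1+|Y|^p)\,dt$, whose first summand is absorbed on the left. Gronwall's inequality applied to $t\mapsto E\sup_{s\le t\wedge\tau_N}|Y(s)|^p$, together with $\gamma\in L^p(\Omega;V)$, then produces a bound independent of $N$, and $N\to\infty$ gives (\ref{eq-9}). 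For (\ref{eq-10}) I would integrate the $|Y|^2$-inequality on $[0,\Upsilon\wedge\tau_N]$, drop the nonnegative term $|Y(\Upsilon\wedge\tau_N)|^2$, raise the resulting inequality to the power $p/2$ and take expectations; the deterministic part is handled by H\"older's inequality on the finite interval $[0,\Upsilon]$ and (\ref{eq-9}), and the stochastic part by the BDG inequality with exponent $p/2$, Young's inequality and (\ref{eq-9}), after which $N\to\infty$.

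I expect the cancellation $(B(Y,Y),Y)=0$, together with the vanishing of the associated boundary terms, to be the main point to get right: it is the algebraic feature that makes the whole scheme work and the reason one cannot argue directly in $V$. The only other mildly delicate issue is the stopping-time truncation, which is needed because the regularity in Definition \ref{dfn-3} does not by itself make finite the $p$-th moments being estimated, so it must precede the application of Gronwall's inequality.
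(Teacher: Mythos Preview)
Your argument is correct and is the standard $H$-level energy estimate for the primitive equations: the cancellation $(B(Y,Y),Y)=0$, the bound on $(G(Y),Y)$, Hypothesis~H for the noise, It\^o on $|Y|^p$, BDG plus stopping-time localisation, and Gronwall are exactly the ingredients. Note, however, that the paper does not prove this proposition at all --- it is simply quoted as Theorem~3.1 of \cite{D-G-T-Z} --- so there is no in-paper proof to compare with; your write-up is a faithful reconstruction of the argument behind the cited result. One harmless slip of wording: the $\tau_N$ need not be finite a.s.; what you actually need and use is only $\tau_N\uparrow\infty$ a.s., which follows from $Y\in C([0,\Upsilon];V)\subset C([0,\Upsilon];H)$.
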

\begin{remark}\label{r-1}
If $\gamma\in L^{\infty}(V)$, then for any $p\geq 1$,
 \begin{eqnarray}\label{eq-29}
E\Big(\int^\Upsilon_0\|Y\|^2dt\Big)^{p}\leq C(p).
\end{eqnarray}
\end{remark}

From now on, throughout the whole paper, we always assume Hypothesis H holds. It's worth mentioning that no extra conditions on $\psi$ are needed to obtain the main result of this paper (see Theorem \ref{thm-1}).
\section{Small time asymptotics}
Let $\varepsilon>0$, by the scaling property of the Brownian motion, it is easy to see that $Y(\varepsilon t)$ coincides in law with the solution of the following equation:
\begin{eqnarray}\label{eq-6}
Y^{\varepsilon}(t)+\varepsilon\int^{t}_{0}\Big(AY^{\varepsilon}+B(Y^{\varepsilon},Y^{\varepsilon})+G(Y^{\varepsilon})\Big)ds=\gamma+\sqrt{\varepsilon}\int^{t}_{0}\psi(\varepsilon s,Y^{\varepsilon}(s)) dW(s).
\end{eqnarray}
Let $\mu^{\varepsilon}_{\gamma}$ be the law of $Y^{\varepsilon}(\cdot)$ on $C([0,\Upsilon];V)$ with initial data $Y^\varepsilon(0)=\gamma$. Define a functional $I(g)$ on $C([0,\Upsilon];V)$ by
\begin{eqnarray}\label{eq-33}
I(g)=\inf_{h\in\Gamma_g}\left\{\frac{1}{2}\int^\Upsilon_0|\dot{h}(t)|^2_{U}dt\right\},
\end{eqnarray}
where
\begin{eqnarray*}
\Gamma_g&=&\Big\{h\in C([0,\Upsilon];V): h(\cdot) \ {\rm{is\ absolutely\ continuous\ and\ such\ that}}\ \\
&& g(t)=\gamma+\int^t_0 \psi(s, g(s))\dot{h}(s)ds,\  0\leq t\leq \Upsilon \Big \}.
\end{eqnarray*}
The main result of this paper reads as
\begin{thm}\label{thm-1}
For any initial value $\gamma\in L^{\infty}( V)$, $\mu^{\varepsilon}_\gamma$ satisfies a large deviation principle with the rate function $I(\cdot)$ defined by (\ref{eq-33}), that is,
\begin{description}
  \item[(i)] For any closed subset $F\subset C([0,\Upsilon];V) $,
  \[
  \lim_{\varepsilon\rightarrow 0}\sup_{\gamma_n\rightarrow \gamma}\varepsilon \log \mu^{\varepsilon}_{\gamma_n}(F)\leq -\inf_{g\in F}I(g).
  \]
  \item[(ii)] For any open subset $G\subset C([0,\Upsilon];V) $,
  \[
  \lim_{\varepsilon\rightarrow 0}\inf_{\gamma_n\rightarrow \gamma}\varepsilon \log \mu^{\varepsilon}_{\gamma_n}(G)\geq -\inf_{g\in G}I(g).
  \]
\end{description}
\end{thm}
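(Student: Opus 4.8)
The plan is to follow the by-now standard strategy for small time large deviations of SPDEs (as in Xu--Zhang \cite{X-Z} and Liu et al.\ \cite{Liu}), splitting the proof into a linearization step and a negligibility step, but carrying the estimates in the strong norm of $V$. First I would introduce the linear stochastic process $Z^{\varepsilon}$ solving $Z^{\varepsilon}(t)=\gamma+\sqrt{\varepsilon}\int_0^t\psi(\varepsilon s, Z^{\varepsilon}(s))\,dW(s)$ (or, closer to the structure needed, with the frozen or suitably approximated diffusion coefficient), and recall from the general theory of Gaussian-type small-noise diffusions that the family $\mathrm{Law}(Z^{\varepsilon})$ on $C([0,\Upsilon];V)$ satisfies an LDP with rate function $I(\cdot)$; this uses Hypothesis H, which guarantees $\psi\in Lip_u(V,\mathcal{L}_2(U;V))$ so that $Z^{\varepsilon}$ genuinely lives in $C([0,\Upsilon];V)$ and the Freidlin--Wentzell machinery (contraction principle from the Schilder LDP for $\sqrt{\varepsilon}W$) applies. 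The bulk of the work is then to show that $Y^{\varepsilon}$ and $Z^{\varepsilon}$ are exponentially equivalent in $C([0,\Upsilon];V)$ at speed $\varepsilon$, i.e.\ for every $\rho>0$,
\begin{eqnarray*}
\limsup_{\varepsilon\rightarrow 0}\ \sup_{\gamma_n\rightarrow\gamma}\ \varepsilon\log P\Big(\sup_{t\in[0,\Upsilon]}\|Y^{\varepsilon}(t)-Z^{\varepsilon}(t)\|>\rho\Big)=-\infty,
\end{eqnarray*}
after which the LDP for $\mu^{\varepsilon}_{\gamma_n}$ with the same rate function follows from the standard transfer lemma, and the uniformity over $\gamma_n\rightarrow\gamma$ is handled exactly as in \cite{X-Z}.

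To prove the exponential equivalence I would set $R^{\varepsilon}=Y^{\varepsilon}-Z^{\varepsilon}$, which satisfies
\begin{eqnarray*}
R^{\varepsilon}(t)+\varepsilon\int_0^t\Big(AY^{\varepsilon}+B(Y^{\varepsilon},Y^{\varepsilon})+G(Y^{\varepsilon})\Big)ds=\sqrt{\varepsilon}\int_0^t\big(\psi(\varepsilon s,Y^{\varepsilon}(s))-\psi(\varepsilon s,Z^{\varepsilon}(s))\big)dW(s),
\end{eqnarray*}
and I would apply It\^o's formula to $\|R^{\varepsilon}(t)\|^2$ (the $V$-norm), using self-adjointness of $A$, the Lipschitz property of $\psi$ on $V$, and the bound on $G$, to reduce matters to controlling the genuinely dangerous term $\varepsilon\int_0^t \langle B(Y^{\varepsilon},Y^{\varepsilon}), AR^{\varepsilon}\rangle\,ds$ (or the analogous pairing in $V$). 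Because the nonlinearity of the 3D primitive equations has no cancellation in $V$ and cannot be absorbed by $L^p$ bounds — the obstruction flagged in the introduction — I would localize by stopping times: define $\tau_M^{\varepsilon}=\inf\{t: \|Y^{\varepsilon}(t)\|^2 + \varepsilon\int_0^t|AY^{\varepsilon}(s)|^2\,ds > M\}\wedge\Upsilon$ (and a companion stopping time for $Z^{\varepsilon}$), establish a Gronwall-type differential inequality for $\|R^{\varepsilon}(t\wedge\tau_M^{\varepsilon})\|^2$ with $M$-dependent coefficients, and thereby get an exponential tail for $\sup_{t\leq\tau_M^{\varepsilon}}\|R^{\varepsilon}(t)\|$ via exponential martingale (Bernstein-type) inequalities with the $\varepsilon$ prefactor producing the required $-\infty$ rate.

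The remaining — and I expect hardest — step is precisely the one the authors advertise in the introduction and in Theorem \ref{thm-2}: showing that the localization is harmless, i.e.
\begin{eqnarray*}
\limsup_{\varepsilon\rightarrow 0}\ \varepsilon\log P\big(\tau_M^{\varepsilon}<\Upsilon\big)\longrightarrow -\infty \quad\text{as } M\rightarrow\infty,
\end{eqnarray*}
uniformly in $\gamma_n\rightarrow\gamma$. This requires an energy estimate in the $V$-norm (indeed an $H^1$/$D(A)$ estimate) for $Y^{\varepsilon}$ together with an exponential-moment control: one would apply It\^o to $\|Y^{\varepsilon}(t)\|^2$, integrate by parts against $AY^{\varepsilon}$, and face the term $\varepsilon\int_0^t\langle B(Y^{\varepsilon},Y^{\varepsilon}),AY^{\varepsilon}\rangle ds$, which for the 3D primitive equations must be split into the barotropic and baroclinic modes (the Cao--Titi splitting \cite{C-T-1}) and estimated using anisotropic Ladyzhenskaya/Agmon inequalities, the key point being that the $\varepsilon$ in front of the deterministic part buys smallness while the square-root $\varepsilon$ in the stochastic part, combined with the exponential martingale inequality, still yields probabilities decaying faster than $e^{-c/\varepsilon}$ for every $c$. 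Getting a genuinely closed (rather than merely a.s.\ finite) exponential estimate here — controlling the nonlinear term uniformly in $\varepsilon$ and in the initial data, with all constants explicit enough to survive the $\varepsilon\log(\cdot)$ limit and then the $M\rightarrow\infty$ limit — is the technical heart of the paper, and is where the auxiliary stopping-time bookkeeping announced in Sect.\ \ref{Sect3.2} will be needed.
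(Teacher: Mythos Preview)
Your overall strategy---reduce to the LDP for the linear process $Z^\varepsilon$ and then prove exponential equivalence of $Y^\varepsilon$ and $Z^\varepsilon$ in $C([0,\Upsilon];V)$---matches the paper exactly. But there is a genuine regularity gap in your execution of the exponential equivalence. When you apply It\^o's formula to $\|R^\varepsilon\|^2=\|Y^\varepsilon-Z^\varepsilon\|^2$ in the $V$-norm, the drift produces (after splitting $AY^\varepsilon=AR^\varepsilon+AZ^\varepsilon$) the cross term $-2\varepsilon\langle AZ^\varepsilon, AR^\varepsilon\rangle$, and controlling this by Cauchy--Schwarz requires $|AZ^\varepsilon|\in L^2$. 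Since $Z^\varepsilon(t)=\gamma+\sqrt{\varepsilon}\int_0^t\psi\,dW$ has no smoothing drift, $Z^\varepsilon$ inherits only the regularity of $\gamma$; for $\gamma\in V\setminus D(A)$ the quantity $|AZ^\varepsilon|$ is not defined in $H$, and any ``companion stopping time'' based on it would fire at $t=0$, making the localization vacuous. The paper resolves this by first approximating $\gamma\in V$ by $\gamma_n\in D(A)$ and breaking the exponential equivalence into three pieces via the triangle inequality: $\|Y^\varepsilon-Y^\varepsilon_n\|$ (Lemma~\ref{lem-6}, which uses Theorem~\ref{thm-2} for both $Y^\varepsilon$ and $Y^\varepsilon_n$), $\|Y^\varepsilon_n-Z^\varepsilon_n\|$ (Lemma~\ref{lem-8}, which is essentially your argument but run with $\gamma_n\in D(A)$ so that the stopping time on $|AZ^\varepsilon_n|^2$ is now meaningful thanks to Lemma~\ref{lem-5}), and $\|Z^\varepsilon_n-Z^\varepsilon\|$ (Lemma~\ref{lem-7}). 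This extra layer is not cosmetic; without it your Gronwall step for $\|R^\varepsilon\|^2$ does not close.

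Your sketch of the $V$-energy exponential estimate (Theorem~\ref{thm-2}) is also underspecified in a way that hides the actual difficulty. The paper does not localize on $\|Y^\varepsilon\|^2+\varepsilon\int|AY^\varepsilon|^2$ (that is the quantity to be bounded) but on the auxiliary quantities $|\mathbf{v}^\varepsilon|_4^4$ and $|\partial_z Y^\varepsilon|^2+\varepsilon\int\|\partial_z Y^\varepsilon\|^2$, chosen so that the specific estimate $|\langle B(Y^\varepsilon,Y^\varepsilon),AY^\varepsilon\rangle|\le \tfrac14|AY^\varepsilon|^2+C(|\mathbf{v}^\varepsilon|_4^8+|\partial_zY^\varepsilon|^2\|\partial_zY^\varepsilon\|^2)\|Y^\varepsilon\|^2$ from \cite{D-G-T-Z} closes under Gronwall. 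One then has to show separately (Propositions~\ref{lem-2} and~\ref{lem-3}) that these auxiliary stopping times are themselves exponentially negligible, which in turn requires introducing the Ornstein--Uhlenbeck-type auxiliary process $\breve Y^\varepsilon$ and splitting $Y^\varepsilon=\hat Y^\varepsilon+\breve Y^\varepsilon$; the ``Cao--Titi barotropic/baroclinic splitting'' you invoke is not what is used here, and a direct exponential-martingale attack on $\|Y^\varepsilon\|^2$ would not succeed for the reason the paper flags: $B$ has no cancellation in $V$.
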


\begin{proof}
 Let $Z^{\varepsilon}=(Z^{1,\varepsilon}, Z^{2,\varepsilon})$ be the solution of the stochastic equation
\begin{eqnarray}\label{eq-7}
Z^{\varepsilon}(t)=\gamma+\sqrt{\varepsilon}\int^t_0\psi(\varepsilon s, Z^{\varepsilon}(s))dW(s),
\end{eqnarray}
and $\nu^{\varepsilon}$ be the law of $Z^{\varepsilon}(\cdot)$ on $C([0,\Upsilon];V)$. By \cite{Daprato}, we know that $\nu^{\varepsilon}$ satisfies a large deviation principle with the rate function $I(\cdot)$. Based on Theorem 4.2.13 in \cite{DZ}, we only need to show that two families of the probability measures $\mu^{\varepsilon}$ and $\nu^{\varepsilon}$ are exponentially equivalent, that is, for any $\delta >0$,
\begin{eqnarray}\label{eq-8}
\lim_{\varepsilon\rightarrow 0}\varepsilon \log P(\sup_{0\leq t\leq \Upsilon}\|Y^{\varepsilon}(t)-Z^{\varepsilon}(t)\|^2>\delta)=-\infty,
\end{eqnarray}
which will be proved in Sect. \ref{sect3.3}.
\end{proof}

\subsection{Energy estimates}\label{Sect3.2}
In order to prove  (\ref{eq-8}), we need to make some a priori estimates.
Notice that the only differences  between (\ref{eq-5}) and (\ref{eq-6}) are the constant coefficients, so Proposition \ref{prp-2} and Remark \ref{r-1} still hold for $Y^\varepsilon$.

\
The following is the main result in this part, which gives the probability of $Y^\varepsilon$ leaves an energy ball in $V$. Set
\[
(|Y^{\varepsilon}|_V(\Upsilon))^2:=\sup_{0\leq t\leq \Upsilon}\|Y^{\varepsilon}(t)\|^2+\varepsilon\int^\Upsilon_0|AY^{\varepsilon}|^2dt.
\]
Then, we claim that
\begin{thm}\label{thm-2}
\begin{eqnarray}\label{eeee-2}
\lim_{M\rightarrow \infty}\sup_{0<\varepsilon\leq 1}\varepsilon\log P\Big((|Y^{\varepsilon}|_V(\Upsilon))^2>M\Big)=-\infty.
\end{eqnarray}
\end{thm}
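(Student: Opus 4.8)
The plan is to derive an energy estimate for $Y^\varepsilon$ in the $V$-norm that, after localization by stopping times, yields an exponential tail bound. First I would apply It\^o's formula to $\|Y^\varepsilon(t)\|^2$ (equivalently, to $(AY^\varepsilon, Y^\varepsilon)$ since $A$ is self-adjoint and positive definite), using equation (\ref{eq-6}). This produces
\begin{eqnarray*}
\|Y^\varepsilon(t)\|^2 + 2\varepsilon\int_0^t |AY^\varepsilon|^2\,ds
&=& \|\gamma\|^2 - 2\varepsilon\int_0^t (B(Y^\varepsilon,Y^\varepsilon), AY^\varepsilon)\,ds
 - 2\varepsilon\int_0^t (G(Y^\varepsilon), AY^\varepsilon)\,ds\\
&& + \varepsilon\int_0^t \|\psi(\varepsilon s, Y^\varepsilon)\|^2_{\mathcal{L}_2(U;V)}\,ds
 + 2\sqrt{\varepsilon}\int_0^t ((\psi(\varepsilon s,Y^\varepsilon)\,dW(s), Y^\varepsilon(s))).
\end{eqnarray*}
The linear term $G$ is easy: by Hypothesis~H-type bounds it is controlled by $C\varepsilon\|Y^\varepsilon\|^2 + \tfrac{\varepsilon}{4}\int_0^t|AY^\varepsilon|^2\,ds$, and the $\psi$ term is bounded by $C\varepsilon\int_0^t(1+\|Y^\varepsilon\|^2)\,ds$ using the $Lip_u(V,\mathcal{L}_2(U;V))$ part of Hypothesis~H. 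The martingale term will be handled by exponential martingale (Markov-type) estimates after the stopping-time localization described below.

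The genuine difficulty, flagged in the introduction, is the nonlinear term $(B(Y^\varepsilon,Y^\varepsilon), AY^\varepsilon)$: the 3D primitive-equation nonlinearity has no cancellation in $V$, and the usual interpolation inequalities only let us bound it by something like $C|Y^\varepsilon|\,\|Y^\varepsilon\|\,|AY^\varepsilon|^{3/2}$ or $C\|Y^\varepsilon\|^3 + \tfrac14|AY^\varepsilon|^2$ — cubic in $\|Y^\varepsilon\|$, which is not absorbable by a Gronwall argument and not integrable against the exponential we want. To get around this I would introduce stopping times: for fixed $N$, set
\[
\tau_N := \inf\Big\{t\ge 0: \int_0^t \|Y^\varepsilon(s)\|^2\,ds > N\Big\}\wedge \Upsilon,
\]
and run the energy estimate only up to $\tau_N$. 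On $[0,\tau_N]$ the cubic term becomes $C\|Y^\varepsilon\|^2\cdot\|Y^\varepsilon\|\cdot\mathbf{1}_{[0,\tau_N]}$, but more usefully, after splitting $\|Y^\varepsilon\|^3 = \|Y^\varepsilon\|^2\cdot\|Y^\varepsilon\|$ and using Young's inequality with the $|AY^\varepsilon|^2$ term, the remaining $L^1_t$ factor $\int_0^{\tau_N}\|Y^\varepsilon\|^4\,ds$ is bounded by $N\cdot\sup_s\|Y^\varepsilon(s)\|^2$, so a Gronwall-in-$\sup\|Y^\varepsilon\|^2$ argument closes on the event $\{\tau_N = \Upsilon\}$ with a constant depending on $N$ and $\|\gamma\|_{L^\infty(V)}$. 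Thus on $\{\tau_N=\Upsilon\}$ one obtains a closed bound of the form $(|Y^\varepsilon|_V(\Upsilon))^2 \le C_N$ plus a martingale remainder.

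With that in hand the proof splits into two probabilistic pieces. First, $P\big((|Y^\varepsilon|_V(\Upsilon))^2 > M\big) \le P(\tau_N < \Upsilon) + P\big((|Y^\varepsilon|_V(\Upsilon))^2 > M,\ \tau_N=\Upsilon\big)$. For the second term, choosing $N$ so that $C_N < M$, the event forces the martingale remainder to be large, and an exponential Markov / Bernstein inequality for the stochastic integral (whose quadratic variation is $\le C\varepsilon\int_0^{\Upsilon}\|Y^\varepsilon\|^2(1+\|Y^\varepsilon\|^2)\,ds$, itself controlled on $\{\tau_N=\Upsilon\}$ by a deterministic constant times $\varepsilon$) gives a bound like $\exp(-c/\varepsilon)$; taking $\varepsilon\log$ and then $M\to\infty$ sends this to $-\infty$. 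For the first term, $P(\tau_N<\Upsilon) = P\big(\int_0^\Upsilon\|Y^\varepsilon\|^2\,ds > N\big)$, which I would control using Remark~\ref{r-1}: for $\gamma\in L^\infty(V)$ one has $E\big(\int_0^\Upsilon\|Y^\varepsilon\|^2\,ds\big)^p \le C(p)$ for every $p\ge 1$. This alone gives only polynomial decay in $N$, not the required $\varepsilon\log \to -\infty$; the resolution is that the constant $C(p)$ and hence the bound is uniform in $\varepsilon\le 1$, and — more to the point — one should run a separate Gronwall/exponential estimate directly on $\int_0^\Upsilon\|Y^\varepsilon\|^2\,ds$ (which in 3D primitive equations satisfies an a priori energy inequality with small parameter $\varepsilon$ in front of the bad terms), so that $P(\tau_N<\Upsilon)$ also decays like $\exp(-cN/\varepsilon)$ or at least fast enough that $\lim_{N\to\infty}\sup_\varepsilon \varepsilon\log P(\tau_N<\Upsilon) = -\infty$. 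The main obstacle is precisely making these two $M$- (resp. $N$-) dependent exponential estimates compatible — i.e., choosing the stopping level $N = N(M)$ so that both $C_{N(M)} < M$ and the $\tau_{N(M)}$-exit probability still vanishes appropriately — and carefully tracking the powers of $\varepsilon$ through the nonlinear Gronwall step so the final $\varepsilon\log$ limit is genuinely $-\infty$ uniformly in $\varepsilon\in(0,1]$.
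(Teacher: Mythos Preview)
Your overall architecture --- It\^o in $V$, localize by a stopping time, split the probability, control both pieces --- matches the paper. The gap is in the specific nonlinear estimate and hence in the choice of stopping time.

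You assume a bound of the type $|(B(Y,Y),AY)| \le \tfrac14|AY|^2 + C\|Y\|^3$. For the 3D primitive equations no such estimate is available: the vertical advection $\Phi(\mathbf v)\partial_z Y$, where $\Phi(\mathbf v)=-\int_{-1}^z\nabla\!\cdot\mathbf v\,dz'$ already carries a full horizontal derivative, cannot be controlled by powers of $\|Y\|$ alone together with a sub-quadratic power of $|AY|$. The estimate that does work (Theorem~3.2 in \cite{D-G-T-Z}, which the paper invokes) is anisotropic:
\[
|\langle B(Y,Y),AY\rangle|\le \tfrac14|AY|^2 + C\big(|\mathbf v|_4^8 + |\partial_z Y|^2\|\partial_z Y\|^2\big)\|Y\|^2.
\]
Because the prefactor involves $|\mathbf v|_4^4$ and $|\partial_z Y|$-norms rather than $\|Y\|$, your stopping time $\tau_N=\inf\{t:\int_0^t\|Y^\varepsilon\|^2>N\}$ does not make the Gronwall close. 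The paper instead localizes with
\[
\tau_K^{(1)}=\inf\{t:|\mathbf v^\varepsilon|_4^4>K\},\qquad
\tau_K^{(2)}=\inf\Big\{t:|\partial_z Y^\varepsilon|^2>K\ \text{or}\ \varepsilon\!\int_0^t\|\partial_z Y^\varepsilon\|^2>K\Big\},
\]
so that on $[0,\tau_K]$ the prefactor is bounded by $K^2+K\|\partial_z Y^\varepsilon\|^2$ and Gronwall yields a closed bound with constants depending on $K$ (Proposition~\ref{lem-1}).

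The second place your sketch is thin --- controlling the exit probability of the stopping time --- is in fact where most of the work lies. Showing $\sup_{\varepsilon}\varepsilon\log P(\tau_K^{(i)}<\Upsilon)\to-\infty$ as $K\to\infty$ is not a one-line consequence of Remark~\ref{r-1}; for $\tau_K^{(1)}$ the paper introduces an auxiliary linear process $\breve Y^\varepsilon$ solving $d\breve Y^\varepsilon+\varepsilon A\breve Y^\varepsilon dt=\sqrt\varepsilon\,\psi(\varepsilon t,Y^\varepsilon)dW$, bounds $|\mathbf v^\varepsilon|_4$ via $\|\breve Y^\varepsilon\|$ and a separate deterministic $L^4$ estimate on $\hat Y^\varepsilon=Y^\varepsilon-\breve Y^\varepsilon$ (Lemmas~\ref{lem-4}--\ref{lem-9}, using Proposition~4.3 of \cite{D-G-T-Z}), and then takes $p=1/\varepsilon$ in a Chebyshev argument with explicit $p$-dependence. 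A parallel argument handles $\tau_K^{(2)}$ (Proposition~\ref{lem-3}). Your ``separate Gronwall/exponential estimate on $\int\|Y^\varepsilon\|^2$'' would not produce these, because the relevant quantities are $|\mathbf v^\varepsilon|_4^4$ and $|\partial_z Y^\varepsilon|$-norms, not $\int\|Y^\varepsilon\|^2$.
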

\

It's difficult to prove Theorem  \ref{thm-2} directly like 2D Navier-Stokes equations since the nonlinear terms of 3D primitive equations have no cancellation property in $V$. To overcome this difficulty, we introduce stopping times $\tau^{(1)}_K$, $\tau^{(2)}_K$  and $\tau_K$. Further,  we verify Theorem \ref{thm-2} holds for $|Y^{\varepsilon}|_V(\Upsilon\wedge \tau_K)$ (see Proposition \ref{lem-1}). To achieve the result of Proposition \ref{lem-1}, some additional exponential moment estimates of $\tau^{(1)}_K$ and $\tau^{(2)}_K$ are required (see Proposition \ref{lem-2} and Proposition \ref{lem-3}).
\

For some constant $K>0$, define the following stopping times
\begin{eqnarray*}
\tau_K&:=&\tau^{(1)}_K\wedge \tau^{(2)}_K,\\
\tau^{(1)}_K&:= &\inf\Big\{t: |\mathbf{v}^{\varepsilon}|^4_{4}> K\Big\},\\
\tau^{(2)}_K&:= &\inf\Big\{t: |\partial_z Y^{\varepsilon}|^2>K,\ {\rm{or}}\ \varepsilon\int^t_0\|\partial_z Y^{\varepsilon}\|^2ds> K\Big\}.
\end{eqnarray*}
Note that
\begin{eqnarray}\notag
&&\varepsilon\log P\Big((|Y^{\varepsilon}|_V(\Upsilon))^2>M\Big)\\ \notag
&\leq& \varepsilon\log P\Big((|Y^{\varepsilon}|_V(\Upsilon))^2>M,\sup_{t\in[0,\Upsilon]}|\mathbf{v}^{\varepsilon}|^4_4\leq K,\ \sup_{t\in[0,\Upsilon]}|\partial_z Y^{\varepsilon}|^2+\varepsilon\int^\Upsilon_0 \|\partial_z Y^{\varepsilon}\|^2dt\leq K\Big)\\ \notag
&&+ \varepsilon\log P\Big((|Y^{\varepsilon}|_V(\Upsilon))^2>M,\sup_{t\in[0,\Upsilon]}|\mathbf{v}^{\varepsilon}|^4_4> K\Big)\\ \notag
&& + \varepsilon\log P\Big((|Y^{\varepsilon}|_V(\Upsilon))^2>M,\sup_{t\in[0,\Upsilon]}|\partial_z Y^{\varepsilon}|^2+\varepsilon\int^\Upsilon_0 \|\partial_z Y^{\varepsilon}\|^2dt> K\Big)\\ \notag
&\leq& \varepsilon\log P\Big((|Y^{\varepsilon}|_V(\Upsilon\wedge \tau_K))^2>M\Big)+\varepsilon\log P\Big(\sup_{t\in[0,\Upsilon]}|\mathbf{v}^{\varepsilon}|^4_4> K\Big)\\
\label{e-10}
&&+\varepsilon\log P\Big(\sup_{t\in[0,\Upsilon]}|\partial_z Y^{\varepsilon}|^2+\varepsilon\int^\Upsilon_0 \|\partial_z Y^{\varepsilon}\|^2dt> K\Big).
\end{eqnarray}
Thus, in order to establish Theorem \ref{thm-2}, we need to prove
\begin{eqnarray}\label{eeee-3}
\lim_{M\rightarrow \infty}\sup_{0<\varepsilon\leq1}\varepsilon\log P\Big((|Y^{\varepsilon}|_V(\Upsilon\wedge \tau_K))^2>M\Big)=-\infty,
\end{eqnarray}
\begin{eqnarray}\label{eq-12}
\lim_{K\rightarrow \infty}\sup_{0<\varepsilon\leq1}\varepsilon\log P\Big(\sup_{t\in[0,\Upsilon]}|\mathbf{v}^{\varepsilon}|^4_4>K\Big)=-\infty,
\end{eqnarray}
and
\begin{eqnarray}\label{eq-13}
\lim_{K\rightarrow \infty}\sup_{0<\varepsilon\leq1}\varepsilon\log P\Big(\sup_{t\in[0,\Upsilon]}|\partial_z Y^{\varepsilon}|^2+\varepsilon\int^\Upsilon_0 \|\partial_z Y^{\varepsilon}\|^2dt> K\Big)=-\infty.
\end{eqnarray}
The above (\ref{eeee-3})-(\ref{eq-13}) will be proved by the following Proposition \ref{lem-1} - Proposition \ref{lem-3}, respectively.

Firstly, for (\ref{eeee-3}),
\begin{prp}\label{lem-1}
\begin{eqnarray}
\lim_{M\rightarrow \infty}\sup_{0<\varepsilon\leq1}\varepsilon\log P\Big((|Y^{\varepsilon}|_V(\Upsilon\wedge \tau_K))^2>M\Big)=-\infty.
\end{eqnarray}
\end{prp}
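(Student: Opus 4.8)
\textbf{Proof proposal for Proposition \ref{lem-1}.}

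The plan is to apply It\^o's formula to $\|Y^{\varepsilon}(t)\|^2$ (i.e. to the $V$-norm) and control every term up to the stopping time $\tau_K$, so that the troublesome trilinear terms coming from $B(Y^\varepsilon,Y^\varepsilon)$ can be absorbed. First I would write the energy identity
\begin{eqnarray*}
\|Y^{\varepsilon}(t\wedge\tau_K)\|^2 + 2\varepsilon\int_0^{t\wedge\tau_K}|AY^{\varepsilon}|^2\,ds
&=& \|\gamma\|^2 - 2\varepsilon\int_0^{t\wedge\tau_K}\langle B(Y^\varepsilon,Y^\varepsilon),AY^\varepsilon\rangle\,ds\\
&& {} - 2\varepsilon\int_0^{t\wedge\tau_K}\langle G(Y^\varepsilon),AY^\varepsilon\rangle\,ds
+ 2\sqrt{\varepsilon}\int_0^{t\wedge\tau_K}((\psi(\varepsilon s,Y^\varepsilon)\,dW,Y^\varepsilon))\\
&& {} + \varepsilon\int_0^{t\wedge\tau_K}\|\psi(\varepsilon s,Y^\varepsilon)\|^2_{\mathcal{L}_2(U;V)}\,ds,
\end{eqnarray*}
using $|AY^\varepsilon|\cong|Y^\varepsilon|_{H^2}$ to provide the dissipation. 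The key point of introducing $\tau_K$ is that on $[0,\tau_K]$ one has the pointwise bounds $|\mathbf{v}^\varepsilon|_4^4\le K$, $|\partial_z Y^\varepsilon|^2\le K$ and $\varepsilon\int_0^t\|\partial_z Y^\varepsilon\|^2\le K$, which are exactly the quantities that appear when one estimates the nonlinearity of the 3D primitive equations (following the splitting of $B$ into barotropic/baroclinic parts as in \cite{C-T-1,D-G-T-Z}). Using those bounds together with Young's inequality, the nonlinear term should satisfy an estimate of the schematic form $2|\langle B(Y^\varepsilon,Y^\varepsilon),AY^\varepsilon\rangle|\le \tfrac{1}{2}|AY^\varepsilon|^2 + C(K)(1+\|Y^\varepsilon\|^2)\|Y^\varepsilon\|^2 $ plus lower-order terms, so that half the dissipation absorbs the top-order piece and the remainder is quadratic-times-$\|Y^\varepsilon\|^2$ with a $K$-dependent (but $\varepsilon$-independent) constant; the linear term $G$ is handled the same way with constant $1$, and the It\^o correction is $\le C\varepsilon(1+\|Y^\varepsilon\|^2)$ by Hypothesis H ($\psi\in Lip_u(V,\mathcal{L}_2(U;V))$, hence bounded growth in $V$).

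Having arrived at a differential inequality of the form $d\|Y^\varepsilon\|^2 + \varepsilon|AY^\varepsilon|^2\,dt \le C(K)(1+\|Y^\varepsilon\|^2)\,dt + (\text{martingale})$ on $[0,\tau_K]$, I would then run the standard exponential (Gr\"onwall / Chebyshev–exponential) argument that underlies small-time and small-noise estimates. Concretely, set $X(t):=\|Y^\varepsilon(t\wedge\tau_K)\|^2 e^{-C(K)(t\wedge\tau_K)}$ (or work directly with $\|Y^\varepsilon\|^2$ and absorb the deterministic growth); then for a parameter $\lambda>0$, apply It\^o to $\exp(\tfrac{\lambda}{\varepsilon}X(t))$ and use that the quadratic variation of the stochastic integral is controlled by $\varepsilon\int\|\psi\|^2_{\mathcal{L}_2(U;V)}\|Y^\varepsilon\|^2\,ds\le C\varepsilon\int(1+\|Y^\varepsilon\|^4)\,ds$. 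Choosing $\lambda$ small enough (depending on $\Upsilon$, $K$, and the Lipschitz/growth constant $L$, but not on $\varepsilon$) makes the drift of $\exp(\tfrac{\lambda}{\varepsilon}X(t))$ nonpositive up to a deterministic factor, so that $E\exp\big(\tfrac{\lambda}{\varepsilon}\sup_{t\le\Upsilon}\|Y^\varepsilon(t\wedge\tau_K)\|^2\big)\le e^{C(K,\Upsilon)/\varepsilon}$ after a supremum martingale (Doob) step. The dissipative integral $\varepsilon\int_0^{\Upsilon\wedge\tau_K}|AY^\varepsilon|^2\,dt$ is controlled at the same time because it sits on the good side of the energy identity. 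Then Chebyshev gives
\[
\varepsilon\log P\big((|Y^\varepsilon|_V(\Upsilon\wedge\tau_K))^2>M\big)\le \varepsilon\log\Big(e^{-\lambda M/\varepsilon}\,E e^{\frac{\lambda}{\varepsilon}(|Y^\varepsilon|_V(\Upsilon\wedge\tau_K))^2}\Big)\le -\lambda M + C(K,\Upsilon),
\]
uniformly in $0<\varepsilon\le 1$, and letting $M\to\infty$ finishes the proof.

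The main obstacle I anticipate is the nonlinear estimate on $\langle B(Y^\varepsilon,Y^\varepsilon),AY^\varepsilon\rangle$: in $V=H^1$ the trilinear form for the 3D primitive equations genuinely has no cancellation (unlike the $H$-level estimate, where $\langle B(Y,Y),Y\rangle=0$), so one must decompose $\mathbf{v}^\varepsilon$ into its vertical average $\bar{\mathbf{v}}^\varepsilon$ and the fluctuation $\tilde{\mathbf{v}}^\varepsilon=\mathbf{v}^\varepsilon-\bar{\mathbf{v}}^\varepsilon$, and bound the various pieces using anisotropic Ladyzhenskaya/Agmon-type inequalities — this is precisely where the quantities $|\mathbf{v}^\varepsilon|_4$ and $|\partial_z Y^\varepsilon|$ enter, and why the stopping times were designed the way they are. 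The secondary technical point is making sure all constants produced are independent of $\varepsilon$ (they may, and will, depend on $K$), and that the exponential-moment parameter $\lambda$ can be chosen uniformly over $\varepsilon\in(0,1]$; this is routine once the differential inequality has the right structure. The estimates controlling $P(\sup|\mathbf{v}^\varepsilon|_4^4>K)$ and $P(\sup|\partial_z Y^\varepsilon|^2+\varepsilon\int\|\partial_z Y^\varepsilon\|^2>K)$, which justify discarding the events outside $\tau_K$, are deferred to Proposition \ref{lem-2} and Proposition \ref{lem-3}.
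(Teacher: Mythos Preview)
Your energy identity and the handling of the trilinear term on $[0,\tau_K]$ are essentially the same as in the paper (the paper invokes Theorem~3.2 in \cite{D-G-T-Z} to get $|\langle B(Y^\varepsilon,Y^\varepsilon),AY^\varepsilon\rangle|\le\tfrac14|AY^\varepsilon|^2+C(|\mathbf{v}^\varepsilon|_4^8+|\partial_zY^\varepsilon|^2\|\partial_zY^\varepsilon\|^2)\|Y^\varepsilon\|^2$, which is exactly the structure you describe). The gap is in the second half: the exponential-moment step as you have written it does not close. Under Hypothesis~H the diffusion coefficient has genuine linear growth in $V$, $\|\psi(\cdot,Y)\|_{\mathcal{L}_2(U;V)}\le L(1+\|Y\|)$, so the quadratic variation of your $X(t)=\|Y^\varepsilon(t\wedge\tau_K)\|^2e^{-C(K)t}$ is of order $\varepsilon(1+X^2)$, not $\varepsilon(1+X)$. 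Applying It\^o to $\exp(\tfrac{\lambda}{\varepsilon}X)$ therefore produces a drift term $\tfrac{\lambda^2}{2\varepsilon}C(1+X^2)\,e^{\lambda X/\varepsilon}$, which no choice of small $\lambda$ can make ``nonpositive up to a deterministic factor'': the stopping time $\tau_K$ controls $|\mathbf{v}^\varepsilon|_4$ and $|\partial_zY^\varepsilon|$, but not $\|Y^\varepsilon\|$ itself, so $X^2$ remains genuinely unbounded. (For a one-dimensional sanity check: if $dX=\sqrt{\varepsilon}\,X\,dW$, then $X$ is lognormal and $E\exp(\lambda X^2/\varepsilon)=+\infty$ for every $\lambda>0$.)

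The paper avoids this by using high moments instead of exponential moments. After the same differential inequality and a first Gronwall, it takes the $L^p$-norm with $p=1/\varepsilon$ and estimates the stochastic integral via the Barlow--Yor/Davis inequality \eqref{eq-11}, $(E|M^*_t|^p)^{1/p}\le C\sqrt{p}\,(E\langle M\rangle_t^{p/2})^{1/p}$. The point is that the martingale carries a factor $\sqrt{\varepsilon}$, so the combination $\sqrt{\varepsilon}\cdot\sqrt{p}=\sqrt{\varepsilon p}$ stays bounded when $p=1/\varepsilon$; this turns the quadratic-in-$\|Y^\varepsilon\|^2$ quadratic variation into a term that can be fed back into a second Gronwall for $(E(|Y^\varepsilon|_V(t\wedge\tau_K))^{2p})^{2/p}$ with an $\varepsilon$-independent constant. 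Chebyshev then gives $\varepsilon\log P\le -\log M+C(K,\Upsilon)$. If you prefer an exponential-martingale argument, you would need to apply it to $\log(1+\|Y^\varepsilon\|^2)$ (whose quadratic variation is bounded) rather than to $\|Y^\varepsilon\|^2$ itself; the resulting tail bound is then logarithmic in $M$, matching the paper's, and not the linear $-\lambda M$ you wrote.
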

\begin{proof}
 Applying It\^{o} formula to $\|Y^{\varepsilon}\|^2$, we deduce that
\begin{eqnarray*}
&&d\|Y^{\varepsilon}\|^2+2\varepsilon|AY^{\varepsilon}|^2dt\\
&=&-2\varepsilon\langle B(Y^{\varepsilon},Y^{\varepsilon} ), AY^{\varepsilon}\rangle dt
-2\varepsilon\langle G(Y^{\varepsilon}), AY^{\varepsilon}\rangle dt\\
&& +\varepsilon \|\psi(\varepsilon t, Y^{\varepsilon})\|^2_{\mathcal{L}_2(U;V)}dt
+2\sqrt{\varepsilon}\langle A^{\frac{1}{2}}\psi(\varepsilon t, Y^{\varepsilon})dW, \ A^{\frac{1}{2}}Y^{\varepsilon} \rangle.
\end{eqnarray*}
Referring to Theorem 3.2 in \cite{D-G-T-Z}, it gives that
\[
|\langle B(Y^{\varepsilon},Y^{\varepsilon} ), AY^{\varepsilon}\rangle|
\leq \frac{1}{4}|AY^{\varepsilon}|^2+C(|\mathbf{v}^{\varepsilon}|^8_{4}+|\partial_z Y^{\varepsilon}|^2\|\partial_z Y^{\varepsilon}\|^2)\|Y^{\varepsilon}\|^2.
\]
By the Cauchy-Schwarz inequality and the Young's inequality, we have
\[
|\langle G(Y^{\varepsilon}), AY^{\varepsilon}\rangle |
\leq \frac{1}{4}|AY^{\varepsilon}|^2+C(1+\|Y^{\varepsilon}\|^2).
\]
Then, we deduce from Hypothesis H that
\begin{eqnarray*}
&&\|Y^{\varepsilon}(t)\|^2+\varepsilon\int^t_0|AY^{\varepsilon}(s)|^2ds\\
&\leq& \|\gamma\|^2+2\varepsilon Ct+\varepsilon L^2t+2\varepsilon C\int^t_0(|\mathbf{v}^{\varepsilon}|^8_4
+|\partial_z Y^{\varepsilon}|^2\|\partial_z Y^{\varepsilon}\|^2+1)\|Y^{\varepsilon}\|^2ds\\
&&+\varepsilon L^2\int^t_0\|Y^{\varepsilon}\|^2ds
+2\sqrt{\varepsilon}|\int^t_0\langle A^{\frac{1}{2}}\psi(\varepsilon s, Y^{\varepsilon})dW, \ A^{\frac{1}{2}}Y^{\varepsilon} \rangle|.
\end{eqnarray*}
In view of the definition of $\tau_K$, we have
\begin{eqnarray}\notag
&&(|Y^{\varepsilon}|_V(\Upsilon\wedge \tau_K))^2\\ \notag
&\leq& \|\gamma\|^2+2\varepsilon C\Upsilon+\varepsilon L^2\Upsilon+2\varepsilon C\int^{\Upsilon\wedge \tau_K}_0(K^2
+K\|\partial_z Y^{\varepsilon}\|^2+1)(|Y^{\varepsilon}|_V(s))^2ds\\
\label{e-2}
&&+\varepsilon L^2\int^{\Upsilon\wedge \tau_K}_0(|Y^{\varepsilon}|_V(s))^2ds
+2\sqrt{\varepsilon}\sup_{t\in[0,\Upsilon\wedge \tau_K]}|\int^t_0\langle A^{\frac{1}{2}}\psi(\varepsilon s, Y^{\varepsilon})dW,\  A^{\frac{1}{2}}Y^{\varepsilon} \rangle|.
\end{eqnarray}
Applying Gronwall inequality to (\ref{e-2}), we have
\begin{eqnarray*}
(|Y^{\varepsilon}|_V(\Upsilon\wedge \tau_K))^2
&\leq& \Big(\|\gamma\|^2+2\varepsilon C\Upsilon+\varepsilon L^2\Upsilon+2\sqrt{\varepsilon}\sup_{t\in[0,\Upsilon\wedge \tau_K]}|\int^t_0\langle A^{\frac{1}{2}}\psi(\varepsilon s, Y^{\varepsilon})dW, \ A^{\frac{1}{2}}Y^{\varepsilon} \rangle|\Big)\\
&&\cdot \exp\Big\{2\varepsilon CK^2 \Upsilon+2\varepsilon C K\int^{\Upsilon\wedge \tau_K}_0\|\partial_z Y^{\varepsilon}\|^2ds+2\varepsilon C\Upsilon+\varepsilon L^2\Upsilon\Big\}\\
&\leq& \Big(\|\gamma\|^2+2\varepsilon C\Upsilon+\varepsilon L^2\Upsilon+2\sqrt{\varepsilon}\sup_{t\in[0,\Upsilon\wedge \tau_K]}|\int^t_0\langle A^{\frac{1}{2}}\psi(\varepsilon s, Y^{\varepsilon})dW, \ A^{\frac{1}{2}}Y^{\varepsilon} \rangle|\Big)\\
&&\cdot \exp\Big\{2\varepsilon CK^2 \Upsilon+2\varepsilon C K^2+2\varepsilon C\Upsilon+\varepsilon L^2\Upsilon\Big\}.
\end{eqnarray*}
Hence, by H\"{o}lder inequality, we have for $p\geq 2$,
\begin{eqnarray}\notag
&&(E(|Y^{\varepsilon}|_V(\Upsilon\wedge \tau_K))^{2p})^{\frac{1}{p}}\\ \notag
&\leq& \Big[\|\gamma\|^2+2\varepsilon C\Upsilon+\varepsilon L^2\Upsilon+2\sqrt{\varepsilon}\Big(E\big(\sup_{t\in[0,\Upsilon\wedge \tau_K]}|\int^t_0\langle A^{\frac{1}{2}}\psi(\varepsilon s, Y^{\varepsilon})dW,\ A^{\frac{1}{2}}Y^{\varepsilon} \rangle|^p\big)\Big)^{\frac{1}{p}}\Big]\\
\label{e-3}
&&\cdot \exp\Big\{2\varepsilon CK^2 \Upsilon+2\varepsilon C K^2+2\varepsilon C\Upsilon+\varepsilon L^2\Upsilon\Big\}.
\end{eqnarray}
To estimate the stochastic integral term in (\ref{e-3}), we will use the following remarkable result from \cite{B-Y,Davis} that there exists a universal constant $C$ such that, for any $p\geq 2$ and for any continuous martingale $M_t$ with $M_0=0$,
\begin{eqnarray}\label{eq-11}
(E(|M^*_t|^p))^{\frac{1}{p}}\leq Cp^{\frac{1}{2}}(E\langle M\rangle^{\frac{p}{2}}_t)^{\frac{1}{p}},
\end{eqnarray}
where $M^*_t=\sup_{s\in [0,t]}|M_s|$.

Using (\ref{eq-11}), we deduce that
\begin{eqnarray}\notag
&&\Big(E\big(\sup_{t\in[0,\Upsilon\wedge \tau_K]}|\int^t_0\langle A^{\frac{1}{2}}\psi(\varepsilon s, Y^{\varepsilon})dW, \ A^{\frac{1}{2}}Y^{\varepsilon} \rangle|\big)^p\Big)^{\frac{1}{p}}\\ \notag
&\leq& C\sqrt{p}\Big(E(\int^{\Upsilon\wedge \tau_K}_0 \|Y^{\varepsilon}\|^2\|\psi(\varepsilon s,Y^\varepsilon)\|^2_{\mathcal{L}_2(U;V)} ds)^{\frac{p}{2}}\Big)^{\frac{1}{p}}\\ \notag
&\leq& C\sqrt{p}L\Big(E\big(\int^{\Upsilon\wedge \tau_K}_0\|Y^{\varepsilon}\|^2(1+\|Y^{\varepsilon}\|^2) ds\big)^{\frac{p}{2}}\Big)^{\frac{1}{p}}\\ \notag
&\leq& C\sqrt{p}L\Big[\Big(E\big(\int^{\Upsilon\wedge \tau_K}_0(1+\|Y^{\varepsilon}\|^2)^2 ds\big)^{\frac{p}{2}}\Big)^{\frac{2}{p}}\Big]^{\frac{1}{2}}\\ \notag
&\leq& C\sqrt{p}L\Big[\Big(E\big(\int^{\Upsilon\wedge \tau_K}_0(1+\|Y^{\varepsilon}\|^4) ds\big)^{\frac{p}{2}}\Big)^{\frac{2}{p}}\Big]^{\frac{1}{2}}\\
\label{e-4}
&\leq& C\sqrt{p}L\Big[\int^{\Upsilon\wedge \tau_K}_01+(E\|Y^{\varepsilon}(s)\|^{2p})^{\frac{2}{p}}ds\Big]^{\frac{1}{2}}.
\end{eqnarray}
As a result of (\ref{e-3}) and (\ref{e-4}), we have
\begin{eqnarray}\notag
&&(E(|Y^{\varepsilon}|_V(\Upsilon\wedge \tau_K))^{2p})^{\frac{2}{p}}\\ \notag
&\leq& \Big[(\|\gamma\|^2+2\varepsilon C\Upsilon+\varepsilon L^2\Upsilon)^2+4C\varepsilon p\Upsilon L^2+4C{\varepsilon p}L^2\int^{\Upsilon\wedge \tau_K}_0(E(|Y^{\varepsilon}|_V(t))^{2p})^{\frac{2}{p}}dt\Big]\\
\label{e-5}
&&\cdot \exp\Big\{4\varepsilon CK^2 \Upsilon+4\varepsilon C K^2+4\varepsilon C\Upsilon+2\varepsilon L^2\Upsilon\Big\}.
\end{eqnarray}
Applying Gronwall inequality to (\ref{e-5}), we get
\begin{eqnarray*}
&&(E(|Y^{\varepsilon}|_V(\Upsilon\wedge \tau_K))^{2p})^{\frac{2}{p}}\\
&\leq& C\exp\Big\{4\varepsilon CK^2 \Upsilon+4\varepsilon C K^2+4\varepsilon C\Upsilon+2\varepsilon L^2\Upsilon\Big\}\\
&&\cdot \Big[(\|\gamma\|^2+2\varepsilon C\Upsilon+\varepsilon L^2\Upsilon)^2+4C\varepsilon p\Upsilon L^2\Big]\cdot \exp\Big\{4C{\varepsilon pL^2\Upsilon}\cdot{\rm{e}}^{4\varepsilon CK^2 \Upsilon+4\varepsilon C K^2+4\varepsilon C\Upsilon+2\varepsilon L^2\Upsilon}\Big\}.
\end{eqnarray*}
Taking $p=\frac{1}{\varepsilon}$ and utilizing the Chebyshev inequality, we obtain
 \begin{eqnarray}\notag
&&\varepsilon \log P((|Y^{\varepsilon}|_V(\Upsilon\wedge \tau_K))^2>M)\\ \notag
&\leq& -\log M+\log(E(|Y^{\varepsilon}|_V(\Upsilon\wedge \tau_K))^{2p})^{\frac{1}{p}}\\ \notag
&\leq& -\log M+\log \sqrt{(\|\gamma\|^2+2\varepsilon C\Upsilon+\varepsilon L^2\Upsilon)^2+4C\varepsilon p\Upsilon L^2} +2\varepsilon CK^2 \Upsilon+2\varepsilon C K^2+2\varepsilon C\Upsilon\\
\label{e-1}
&&+\varepsilon L^2\Upsilon+2C{\varepsilon pL^2\Upsilon}\cdot{\rm{e}}^{4\varepsilon CK^2 \Upsilon+4\varepsilon C K^2+4\varepsilon C\Upsilon+2\varepsilon L^2\Upsilon}.
\end{eqnarray}
Let $M\rightarrow \infty$ on both sides of (\ref{e-1}), we complete the proof.
\end{proof}

 For (\ref{eq-12}), we have
\begin{prp}\label{lem-2}
\begin{eqnarray*}
\lim_{K\rightarrow \infty}\sup_{0<\varepsilon\leq1}\varepsilon\log P\Big(\sup_{t\in[0,\Upsilon]}|\mathbf{v}^{\varepsilon}|^4_4>K\Big)=-\infty.
\end{eqnarray*}
\end{prp}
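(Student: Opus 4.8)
The plan is to mimic the scheme of Proposition~\ref{lem-1}, applying It\^o's formula to the functional $\mathbf{v}\mapsto|\mathbf{v}|_4^4=\int_{\mathcal{O}}(|v_1|^4+|v_2|^4)\,dxdydz$ along the $\mathbf{v}$-component of (\ref{eq-6}) (equivalently, of (\ref{eq5-1}) with rescaled constants). What makes an $L^4$ estimate possible is that, although the transport nonlinearity has no cancellation in $V$, it \emph{does} cancel componentwise against $|\mathbf{v}^\varepsilon|^2\mathbf{v}^\varepsilon$: since $(\mathbf{v}^\varepsilon,\Phi(\mathbf{v}^\varepsilon))$ is divergence free in $\R^{3}$ by (\ref{eq-7-1}), while $\mathbf{v}^\varepsilon=0$ on $\Gamma_l$ and $\Phi(\mathbf{v}^\varepsilon)=0$ on $\Gamma_u\cup\Gamma_b$, an integration by parts gives, for $i=1,2$,
\[
\Big\langle|v_i^\varepsilon|^2 v_i^\varepsilon,\ (\mathbf{v}^\varepsilon\cdot\nabla)v_i^\varepsilon+\Phi(\mathbf{v}^\varepsilon)\partial_z v_i^\varepsilon\Big\rangle=\frac14\int_{\partial\mathcal{O}}\big((\mathbf{v}^\varepsilon,\Phi(\mathbf{v}^\varepsilon))\cdot\mathbf{n}\big)|v_i^\varepsilon|^4\,d\sigma=0 ,
\]
the Cao--Titi structure. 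The viscous term produces a non-negative dissipation $\varepsilon D(t)$ which, by the three-dimensional Sobolev embedding, controls $\varepsilon|\mathbf{v}^\varepsilon|_{12}^4$ modulo $\varepsilon|\mathbf{v}^\varepsilon|_4^4$.

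I would then estimate the remaining drift terms as in Lemma~5.1 of \cite{D-G-T-Z}: the Coriolis term by $C\varepsilon|\mathbf{v}^\varepsilon|_4^4$; the buoyancy coupling $4\varepsilon\langle\int_{-1}^z\nabla T^\varepsilon\,dz',|\mathbf{v}^\varepsilon|^2\mathbf{v}^\varepsilon\rangle$ via $|\int_{-1}^z\nabla T^\varepsilon\,dz'|\le\|T^\varepsilon\|$, the interpolation $|\mathbf{v}^\varepsilon|_6^3\le|\mathbf{v}^\varepsilon|_4^{3/2}|\mathbf{v}^\varepsilon|_{12}^{3/2}$ and Young's inequality; the It\^o correction by $C\varepsilon|\mathbf{v}^\varepsilon|_4^2\,\|\psi(\varepsilon t,Y^\varepsilon)\|_{\mathcal{L}_2(U;V)}^2$ using Hypothesis~H and $V\hookrightarrow L^4(\mathcal{O})$; and the barotropic pressure term $-4\varepsilon\langle\nabla p_b^\varepsilon,|\mathbf{v}^\varepsilon|^2\mathbf{v}^\varepsilon\rangle$ --- which, unlike in the two-dimensional Navier--Stokes case, does \emph{not} vanish, because $|\mathbf{v}^\varepsilon|^2\mathbf{v}^\varepsilon$ is not annihilated by $P_H$ --- by the argument of \cite{C-T-1,D-G-T-Z} exploiting the two-dimensional elliptic equation satisfied by $p_b^\varepsilon$. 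Each of these is bounded by a fraction of $\varepsilon D(t)$ plus a term of the form $C\varepsilon(1+\|Y^\varepsilon\|^2)(1+|\mathbf{v}^\varepsilon|_4^4)$, with $\varepsilon$-powers chosen so as to be absorbable. Since $\gamma\in L^\infty(V)$ forces $|\gamma_1|_4^4\le C\|\gamma\|^4$ to be deterministically bounded, one arrives, after discarding the surviving dissipation, at
\[
|\mathbf{v}^\varepsilon(t)|_4^4\le C\|\gamma\|^4+C\varepsilon\Upsilon+C\varepsilon\int_0^t(1+\|Y^\varepsilon(s)\|^2)(1+|\mathbf{v}^\varepsilon(s)|_4^4)\,ds+4\sqrt{\varepsilon}\,M_t ,
\]
with $M_t=\int_0^t\langle|\mathbf{v}^\varepsilon|^2\mathbf{v}^\varepsilon,\psi_1(\varepsilon s,Y^\varepsilon)\,dW_1\rangle$.

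To make the Gronwall step uniform in $\varepsilon$ I would control the coefficient $\int_0^t\|Y^\varepsilon\|^2\,ds$ by the $L^2$-energy estimate for (\ref{eq-6}), obtained by the same method: It\^o's formula for $|Y^\varepsilon|^2$, the $L^2$-cancellation $\langle B(Y^\varepsilon,Y^\varepsilon),Y^\varepsilon\rangle=0$ and Gronwall yield $\varepsilon\int_0^\Upsilon\|Y^\varepsilon\|^2\,ds\le C(1+\sqrt{\varepsilon}\sup_{t\le\Upsilon}|M'_t|)$, where $M'_t=\int_0^t\langle\psi(\varepsilon s,Y^\varepsilon)\,dW,Y^\varepsilon\rangle$ has quadratic variation controlled, uniformly in $\varepsilon$, through $\sup_t|Y^\varepsilon|^2$ (the self-reference being closed by the same estimate). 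One can then follow Proposition~\ref{lem-1} line by line: take $2p$-th moments of $\sup_{t\le\Upsilon}|\mathbf{v}^\varepsilon|_4^4$, estimate the stochastic-integral contributions $\sqrt{\varepsilon}\sup_t|M_t|$ and $\sqrt{\varepsilon}\sup_t|M'_t|$ by the Davis--Burkholder bound (\ref{eq-11}) (closing the ensuing self-references through the quadratic-variation bounds exactly as in (\ref{e-5})), and set $p=1/\varepsilon$ so that every $\varepsilon p$-factor stays bounded; this gives $\big(E(\sup_{t\le\Upsilon}|\mathbf{v}^\varepsilon|_4^4)^{2p}\big)^{1/(2p)}\le C$ uniformly in $\varepsilon\in(0,1]$. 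Chebyshev's inequality then yields $\varepsilon\log P(\sup_{t\le\Upsilon}|\mathbf{v}^\varepsilon|_4^4>K)\le2\log(C/K)$, and letting $K\to\infty$ finishes the proof.

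The main difficulty is the $L^4$ a priori estimate itself: one must extract the cancellation of the transport term while \emph{simultaneously} taming the barotropic pressure $\nabla p_b^\varepsilon$ (which genuinely contributes here) and the buoyancy coupling, using only the $\varepsilon$-degenerate dissipation $\varepsilon D(t)$; the resulting bounds are not uniform in $\varepsilon$ in isolation, and it is only by combining them with the $L^2$-energy estimate, carefully tracking the $\varepsilon$-powers through every stochastic term, and choosing $p=1/\varepsilon$ that one recovers the uniform-in-$\varepsilon$ control required for the exponentially small probability estimate.
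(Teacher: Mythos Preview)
Your approach is \emph{different} from the paper's. Rather than applying It\^o directly to $|\mathbf v^\varepsilon|_4^4$, the authors split $Y^\varepsilon=\hat Y^\varepsilon+\breve Y^\varepsilon$, where $\breve Y^\varepsilon$ solves the \emph{linear} stochastic equation $d\breve Y^\varepsilon+\varepsilon A\breve Y^\varepsilon\,dt=\sqrt\varepsilon\,\psi(\varepsilon t,Y^\varepsilon)\,dW$ with $\breve Y^\varepsilon(0)=0$, so that $\hat Y^\varepsilon$ satisfies a purely deterministic equation. They then (i) prove the exponential tail for $\sup_t\|\breve Y^\varepsilon\|^2$ and the uniform moment bound $E\sup_t|A\breve Y^\varepsilon|^{2p}\le C(p)$ (this uses $\psi\in Bnd_u(V,\mathcal L_2(U;D(A)))$); (ii) invoke the \emph{deterministic} $L^4$ estimate of \cite[Prop.~4.3]{D-G-T-Z} for $\hat{\mathbf v}^\varepsilon$, in which the Gronwall coefficient is $H(t)=(1+\|Y^\varepsilon\|^2)(1+|A\breve Y^\varepsilon|^4)$; (iii) combine via $|\mathbf v^\varepsilon|_4^4\le C(\|\breve Y^\varepsilon\|^4+|\hat{\mathbf v}^\varepsilon|_4^4)$ and the embedding $V\hookrightarrow L^4$. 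The virtue of the splitting is that no It\^o formula in $L^4$ is needed, and the pressure/nonlinearity analysis is delegated entirely to the deterministic Cao--Titi machinery.

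Your direct route is workable in spirit but, as written, has a gap at the Gronwall step. You obtain a differential inequality whose coefficient $C\varepsilon(1+\|Y^\varepsilon\|^2)$ is random; after Gronwall this produces the factor $\exp\big(C\varepsilon\int_0^\Upsilon\|Y^\varepsilon\|^2\,ds\big)$, and substituting the pathwise $L^2$ bound $\varepsilon\int_0^\Upsilon\|Y^\varepsilon\|^2\,ds\le C(1+\sqrt\varepsilon\,M'^{*}_\Upsilon)$ still leaves a \emph{random} exponential. Taking $(2p)$-th moments with $p=1/\varepsilon$ then forces you to control $E\exp(C\varepsilon^{-1/2}M'^{*}_\Upsilon)$, which is not available. ``Following Proposition~\ref{lem-1} line by line'' does not fix this, because there the exponential factor is rendered deterministic precisely by the stopping time $\tau_K$---and $\tau_K$ is defined via the very $L^4$ and $|\partial_zY^\varepsilon|$ quantities you are trying to estimate. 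The repair is to first establish the $H$-level exponential estimate $\lim_{K\to\infty}\sup_\varepsilon\varepsilon\log P\big(\sup_t|Y^\varepsilon|^2+\varepsilon\!\int_0^\Upsilon\|Y^\varepsilon\|^2>K\big)=-\infty$ (this is clean since $\langle B(Y,Y),Y\rangle=0$), introduce a \emph{new} stopping time $\sigma_K$ on that quantity, run your $L^4$ argument up to $\sigma_K$ so the Gronwall factor becomes $e^{C(K+\varepsilon\Upsilon)}$, and only then take moments and combine as in (\ref{e-10}). You should also check carefully that the martingale $\sqrt\varepsilon\,M_t=\sqrt\varepsilon\int_0^t\langle|\mathbf v^\varepsilon|^2\mathbf v^\varepsilon,\psi_1\,dW_1\rangle$ closes: its quadratic variation is of order $\int|\mathbf v^\varepsilon|_4^6(1+\|Y^\varepsilon\|^2)\,ds$, and absorbing this after (\ref{eq-11}) requires a further Young-inequality splitting under $\sigma_K$.
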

In order to prove Proposition \ref{lem-2}, we prove the following Lemma \ref{lem-4}- Lemma \ref{lem-9}. Now, we introduce the following auxiliary process $\breve{Y}^{\varepsilon}$ satisfying
\begin{eqnarray}\label{e-6}
\left\{
  \begin{array}{ll}
    d\breve{Y}^{\varepsilon}+\varepsilon A\breve{Y}^{\varepsilon}dt=\sqrt{\varepsilon}\psi(\varepsilon t,{Y}^{\varepsilon})dW,  \\
    \breve{Y}^{\varepsilon}(0)=0.
  \end{array}
\right.
\end{eqnarray}
Then, we have
\begin{lemma}\label{lem-4}
For any $p\geq 1$,
\begin{eqnarray}\label{eq-17}
\lim_{K\rightarrow \infty}\sup_{0<\varepsilon\leq1}\varepsilon\log P\Big(\sup_{t\in[0,\Upsilon]}\|\breve{Y}^{\varepsilon}(t)\|^{2p}> K\Big)=-\infty.
\end{eqnarray}
\end{lemma}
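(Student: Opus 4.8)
The plan is to prove the exponential tail bound for the auxiliary Ornstein--Uhlenbeck type process $\breve Y^\varepsilon$ by obtaining a sharp moment estimate on $\sup_{t\in[0,\Upsilon]}\|\breve Y^\varepsilon(t)\|^{2p}$ whose constant is uniform in $\varepsilon$ and polynomial in $p$, and then optimizing over $p$ by choosing $p\sim 1/\varepsilon$, exactly as in the proof of Proposition \ref{lem-1}. First I would apply the It\^o formula to $\|\breve Y^\varepsilon\|^2=|A^{1/2}\breve Y^\varepsilon|^2$; since $A$ is self-adjoint and positive definite and \eqref{e-6} has \emph{no} nonlinear term, the drift contributes $-2\varepsilon|A\breve Y^\varepsilon|^2\,dt\le 0$ and the only terms to track are the It\^o correction $\varepsilon\|\psi(\varepsilon t,Y^\varepsilon)\|^2_{\mathcal L_2(U;V)}\,dt$ and the stochastic integral $2\sqrt\varepsilon\langle A^{1/2}\psi(\varepsilon t,Y^\varepsilon)\,dW,\,A^{1/2}\breve Y^\varepsilon\rangle$. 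By Hypothesis H the correction term is bounded by $\varepsilon L^2(1+\|Y^\varepsilon\|^2)$, which after integrating in $t$ and raising to the $p/2$ power is controlled by $C_p\,\varepsilon^{p/2}\big(\Upsilon^{p/2}+E(\int_0^\Upsilon\|Y^\varepsilon\|^2ds)^{p/2}\big)$; here Remark \ref{r-1} (valid for $Y^\varepsilon$ since only the constant coefficients differ) gives a finite bound $C(p)$ for \emph{every} $p\ge1$, crucially \emph{independent of} $K$.

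Next I would estimate the supremum of the martingale term using the Burkholder--Davis--Gundy inequality in the dimension-free form \eqref{eq-11}: $\big(E\sup_{t\le\Upsilon}|\int_0^t\langle A^{1/2}\psi\,dW,A^{1/2}\breve Y^\varepsilon\rangle|^p\big)^{1/p}\le C\sqrt p\,\big(E(\int_0^\Upsilon\|\breve Y^\varepsilon\|^2\|\psi(\varepsilon s,Y^\varepsilon)\|^2_{\mathcal L_2(U;V)}ds)^{p/2}\big)^{1/p}$. Using Hypothesis H to replace $\|\psi\|^2_{\mathcal L_2(U;V)}$ by $L^2(1+\|Y^\varepsilon\|^2)$, a Cauchy--Schwarz split separates a factor involving $\sup_{t\le\Upsilon}\|\breve Y^\varepsilon(t)\|^{2}$ (which can be absorbed by Young's inequality into the left side, since it is multiplied by $\sqrt\varepsilon$) from a factor $\big(E(\int_0^\Upsilon(1+\|Y^\varepsilon\|^2)^2ds)^{p/2}\big)^{1/p}\le C(p)$ again by Remark \ref{r-1}. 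Collecting everything yields $\big(E\sup_{t\le\Upsilon}\|\breve Y^\varepsilon(t)\|^{2p}\big)^{1/p}\le C\,\varepsilon\,p\cdot C(p)$ type bound; the delicate point is that the $K$-dependent stopping times of Proposition \ref{lem-1} are \emph{not} used here, so the bound is genuinely uniform over $0<\varepsilon\le1$ once $p$ is fixed.

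Finally I would take $p=1/\varepsilon$ in the moment bound, apply Chebyshev's inequality to $P(\sup_{t\le\Upsilon}\|\breve Y^\varepsilon(t)\|^{2p}>K^{?})$ — more precisely to $P(\sup_{t\le\Upsilon}\|\breve Y^\varepsilon(t)\|^{2}>K^{1/p})$ rewritten appropriately so the statement \eqref{eq-17} matches — and conclude $\varepsilon\log P(\cdot)\le -\log K+\log\big(E\sup_t\|\breve Y^\varepsilon\|^{2p}\big)^{1/p}\le -\log K + (\text{a quantity bounded uniformly in }\varepsilon)$, which tends to $-\infty$ as $K\to\infty$ uniformly in $0<\varepsilon\le1$. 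Since the claim must hold for every $p\ge1$, I would carry the general exponent $p$ through from the start (writing the It\^o formula for $\|\breve Y^\varepsilon\|^{2p}$ directly, or bootstrapping from the $\|\breve Y^\varepsilon\|^2$ estimate via $\big(E\sup\|\breve Y^\varepsilon\|^{2pq}\big)^{1/q}$ with a second application of \eqref{eq-11}). The main obstacle is bookkeeping: ensuring that no constant secretly depends on $K$ and that the $p$-dependence stays polynomial so that, after setting $p=1/\varepsilon$, the right-hand side of the Chebyshev bound remains bounded as $\varepsilon\to0$; once that is arranged the argument is a streamlined version of the proof of Proposition \ref{lem-1} with the nonlinear term and stopping times deleted.
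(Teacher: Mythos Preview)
Your proposal is correct and follows essentially the same route as the paper: apply It\^o's formula to $\|\breve Y^\varepsilon\|^2$, drop the dissipative drift $-2\varepsilon|A\breve Y^\varepsilon|^2$, bound the It\^o correction via Hypothesis~H and Remark~\ref{r-1}, control the martingale term by the sharp BDG inequality \eqref{eq-11} together with an absorption argument, and then finish by Chebyshev with $p=1/\varepsilon$ exactly as in Proposition~\ref{lem-1}. Your observation that no stopping times are needed here because \eqref{e-6} carries no nonlinear term is precisely the simplification the paper exploits.
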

\begin{proof}
 Applying It\^{o} formula to $\| \breve{Y}^{\varepsilon}(t)\|^2$, we have
\begin{eqnarray*}
\| \breve{Y}^{\varepsilon}(t)\|^2+2\varepsilon\int^t_0|A \breve{Y}^{\varepsilon}(s)|^2ds
=2\sqrt{\varepsilon}\int^t_0\langle A^{\frac{1}{2}} \breve{Y}^{\varepsilon}, A^{\frac{1}{2}}\psi(\varepsilon s, \ {Y}^{\varepsilon})dW(s)\rangle
+\varepsilon \int^t_0\|\psi(\varepsilon s,  {Y}^{\varepsilon})\|^2_{\mathcal{L}_2(U;V)}ds.
\end{eqnarray*}
By Hypothesis H, we get
\begin{eqnarray*}
\| \breve{Y}^{\varepsilon}(t)\|^2+\varepsilon\int^t_0|A \breve{Y}^{\varepsilon}(s)|^2ds
\leq 2\sqrt{\varepsilon}\int^t_0\langle A^{\frac{1}{2}} \breve{Y}^{\varepsilon},A^{\frac{1}{2}} \psi(\varepsilon s, \ {Y}^{\varepsilon})dW(s)\rangle
+\varepsilon L^2t+\varepsilon L^2\int^t_0\|{Y}^{\varepsilon}(s)\|^2ds.
\end{eqnarray*}
Let
\[
M_t:=2\sqrt{\varepsilon}\int^t_0\langle A^{\frac{1}{2}} \breve{Y}^{\varepsilon}, A^{\frac{1}{2}}\psi(\varepsilon s,  {Y}^{\varepsilon})dW(s)\rangle,\quad M^*_T:=\sup_{t\in[0,\Upsilon]}|M(t)|.
\]
By H\"{o}lder inequality, we have for any $p\geq 1$,
\begin{eqnarray}\label{e-7}
(E\sup_{t\in[0,\Upsilon]}\|\breve{Y}^{\varepsilon}(t)\|^{2p})^{\frac{1}{p}}\leq (E|M^*_\Upsilon|^{p})^{\frac{1}{p}}+\varepsilon L^2 \Upsilon+\varepsilon L^2 \Big(E\big(\int^\Upsilon_0\|{Y}^{\varepsilon}\|^2ds\big)^p\Big)^{\frac{1}{p}}.
\end{eqnarray}
For $(E|M^*_\Upsilon|^{p})^{\frac{1}{p}}$ in (\ref{e-7}), by (\ref{eq-11}), we have
\begin{eqnarray}\notag
(E|M^*_\Upsilon|^{p})^{\frac{1}{p}}
&\leq& 2C\sqrt{\varepsilon p}L\Big(E\big(\int^\Upsilon_0\|\breve{Y}^{\varepsilon}\|^2(1+\|{Y}^{\varepsilon}\|^2)ds\big)^{\frac{p}{2}}\Big)^{\frac{1}{p}}
\\ \notag
&\leq& 2C\sqrt{\varepsilon p}L\Big[E\Big(\eta\sup_{t\in [0,\Upsilon]}\|\breve{Y}^{\varepsilon}\|^4+C(\int^\Upsilon_0(1+\|{Y}^{\varepsilon}\|^2)ds)^2\Big)^{\frac{p}{2}}\Big]^{\frac{1}{p}}
\\ \notag
&\leq& 2C\sqrt{\varepsilon p}L\eta(E\sup_{t\in [0,\Upsilon]}\|\breve{Y}^{\varepsilon}\|^{2p})^{\frac{1}{p}}+2C\sqrt{\varepsilon p}L\Big(E\big(\int^\Upsilon_0(1+\|{Y}^{\varepsilon}\|^2)ds\big)^p\Big)^{\frac{1}{p}}\\
\label{eq-16}
&\leq& \frac{1}{2}(E\sup_{t\in [0,\Upsilon]}\|\breve{Y}^{\varepsilon}\|^{2p})^{\frac{1}{p}}+2C\sqrt{\varepsilon p}L\Big(E\big(\int^\Upsilon_0(1+\|{Y}^{\varepsilon}\|^2)ds\big)^p\Big)^{\frac{1}{p}},
\end{eqnarray}
where $\eta $ is chosen to be sufficiently small such that $2C\sqrt{\varepsilon p}L\eta<\frac{1}{2}$.

As a result of (\ref{e-7}) and (\ref{eq-16}), we have
\begin{eqnarray*}\notag
&&(E\sup_{t\in[0,\Upsilon]}\|\breve{Y}^{\varepsilon}(t)\|^{2p})^{\frac{1}{p}}\\
&\leq& 2\varepsilon L\Upsilon+2\varepsilon L \Big(E(\int^\Upsilon_0\|{Y}^{\varepsilon}\|^2ds)^p\Big)^{\frac{1}{p}}+4C\sqrt{\varepsilon p}L\Big(E(\int^\Upsilon_0(1+\|{Y}^{\varepsilon}\|^2)ds)^p\Big)^{\frac{1}{p}}.
\end{eqnarray*}
By (\ref{eq-29}), we obtain
\begin{eqnarray}\label{eq-31}
(E\sup_{t\in[0,\Upsilon]}\|\breve{Y}^{\varepsilon}(t)\|^{2p})^{\frac{1}{p}}
\leq 2\varepsilon L\Upsilon+2\varepsilon C(p) L +4C(p)\sqrt{\varepsilon p}L.
\end{eqnarray}
Using the same argument as Proposition \ref{lem-1}, we conclude the result.
\end{proof}

\

Applying It\^{o} formula to $A\breve{Y}^{\varepsilon}(t)$ and using Hypothesis H, we can easily show that
\begin{lemma}\label{lem-10}
For any $p\geq 2$,
\begin{eqnarray*}
E\sup_{t\in[0,\Upsilon]}|A\breve{Y}^{\varepsilon}(t)|^{2p}\leq C(p),
\end{eqnarray*}
where $C(p)$ is independent of $\varepsilon$.
\end{lemma}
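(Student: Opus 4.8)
The plan is to carry out the energy estimate of Lemma~\ref{lem-4} one Sobolev level higher, working with $A\breve{Y}^{\varepsilon}$ in place of $\breve{Y}^{\varepsilon}$. By Hypothesis~H (the $Bnd_u(V,\mathcal{L}_2(U;D(A)))$ part) and the equivalence $|A\cdot|\cong|\cdot|_{H^2(\mathcal{O})}$, the integrand satisfies $A\psi(\varepsilon t,Y^{\varepsilon})\in\mathcal{L}_2(U;H)$ with $\|A\psi(\varepsilon t,Y^{\varepsilon})\|_{\mathcal{L}_2(U;H)}\leq CL(1+\|Y^{\varepsilon}\|)$, and by Proposition~\ref{prp-2} the process $s\mapsto A\psi(\varepsilon s,Y^{\varepsilon}(s))$ is square-integrable and predictable. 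Since $A$ is self-adjoint and positive definite, applying $A$ to (\ref{e-6}) (formally; rigorously one works on the finite-dimensional spectral truncations and then passes to the limit, using that $A$ has compact resolvent) shows that $A\breve{Y}^{\varepsilon}$ solves $d(A\breve{Y}^{\varepsilon})+\varepsilon A(A\breve{Y}^{\varepsilon})\,dt=\sqrt{\varepsilon}\,A\psi(\varepsilon t,Y^{\varepsilon})\,dW$ with $A\breve{Y}^{\varepsilon}(0)=0$. Applying the It\^{o} formula to $|A\breve{Y}^{\varepsilon}(t)|^{2}$ and discarding the nonnegative dissipation $2\varepsilon\int_{0}^{t}|A^{3/2}\breve{Y}^{\varepsilon}|^{2}\,ds$, I obtain
\[
|A\breve{Y}^{\varepsilon}(t)|^{2}\leq 2\sqrt{\varepsilon}\int_{0}^{t}\langle A\breve{Y}^{\varepsilon},A\psi(\varepsilon s,Y^{\varepsilon})\,dW(s)\rangle+\varepsilon\int_{0}^{t}\|A\psi(\varepsilon s,Y^{\varepsilon})\|^{2}_{\mathcal{L}_2(U;H)}\,ds.
\]

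Next I would take the supremum over $t\in[0,\Upsilon]$, raise to the power $p$, take expectations, and estimate the two terms on the right separately. For the trace term, the bound on $\|A\psi\|_{\mathcal{L}_2(U;H)}$ above together with Minkowski's inequality and estimate (\ref{eq-29}) of Remark~\ref{r-1} (applicable because $\gamma\in L^{\infty}(V)$ and Proposition~\ref{prp-2} holds for $Y^{\varepsilon}$) gives
\[
\Big(E\Big(\varepsilon\int_{0}^{\Upsilon}\|A\psi(\varepsilon s,Y^{\varepsilon})\|^{2}_{\mathcal{L}_2(U;H)}\,ds\Big)^{p}\Big)^{\frac{1}{p}}\leq C\varepsilon L^{2}\Big(\Upsilon+\Big(E\Big(\int_{0}^{\Upsilon}\|Y^{\varepsilon}\|^{2}\,ds\Big)^{p}\Big)^{\frac{1}{p}}\Big)\leq C(p)\varepsilon.
\]

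For the stochastic term, write $N_{t}$ for that martingale and set $M^{*}:=\sup_{t\in[0,\Upsilon]}|A\breve{Y}^{\varepsilon}(t)|^{2}$, so that $M^{*}\leq N^{*}_{\Upsilon}+\varepsilon\int_{0}^{\Upsilon}\|A\psi(\varepsilon s,Y^{\varepsilon})\|^{2}_{\mathcal{L}_2(U;H)}\,ds$. Combining the Burkholder--Davis--Gundy inequality (\ref{eq-11}), the elementary bound $\langle N\rangle_{\Upsilon}\leq 4\varepsilon\,M^{*}\int_{0}^{\Upsilon}\|A\psi(\varepsilon s,Y^{\varepsilon})\|^{2}_{\mathcal{L}_2(U;H)}\,ds$, and the Young-type splitting $ab\leq\eta a^{2}+\frac{1}{4\eta}b^{2}$ applied exactly as in (\ref{eq-16}) bounds $(E|N^{*}_{\Upsilon}|^{p})^{\frac{1}{p}}$ by $C\sqrt{\varepsilon p}\,\eta^{\frac{1}{2}}(E(M^{*})^{p})^{\frac{1}{p}}$ plus a constant multiple of the trace term already estimated. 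Taking $\eta=\eta(p)$ small enough that $C\sqrt{\varepsilon p}\,\eta^{\frac{1}{2}}\leq\frac{1}{2}$ (possible since $\varepsilon\leq1$ and $p$ is fixed) and absorbing this term into the left-hand side, I conclude
\[
\Big(E\sup_{t\in[0,\Upsilon]}|A\breve{Y}^{\varepsilon}(t)|^{2p}\Big)^{\frac{1}{p}}\leq C(p),
\]
with $C(p)$ independent of $\varepsilon$, which is the assertion.

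The lemma carries no real difficulty beyond the machinery already in place; the points that deserve attention are the $D(A)$-valued regularity of $\breve{Y}^{\varepsilon}$ and the rigorous justification of the It\^{o} formula for $|A\breve{Y}^{\varepsilon}|^{2}$. Both are dispatched by running the whole computation on the spectral Galerkin approximations $AP_{N}\breve{Y}^{\varepsilon}$ — where every moment used above, in particular the $E(M^{*})^{p}$ that must be finite for the absorption step to be meaningful, is finite by standard estimates for finite-dimensional stochastic convolutions — and then letting $N\to\infty$ with Fatou's lemma. One final bookkeeping point: the absorption constant $\eta$ can be taken independent of $\varepsilon$ precisely because $\varepsilon\leq1$ and, unlike in the proof of Proposition~\ref{lem-1}, here $p$ is fixed rather than coupled to $\varepsilon$.
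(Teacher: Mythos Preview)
Your proof is correct and follows exactly the approach the paper indicates (``Applying It\^{o} formula to $A\breve{Y}^{\varepsilon}(t)$ and using Hypothesis H''), supplying the details the paper omits; the key inputs --- the $Bnd_u(V,\mathcal{L}_2(U;D(A)))$ bound on $\psi$, the Burkholder--Davis--Gundy inequality (\ref{eq-11}), the absorption argument of (\ref{eq-16}), and the moment bound (\ref{eq-29}) --- are precisely those intended. One cosmetic point: after the Young splitting the coefficient in front of $(E(M^*)^p)^{1/p}$ should be $C\sqrt{\varepsilon p}\,\eta$ rather than $C\sqrt{\varepsilon p}\,\eta^{1/2}$, but this does not affect the argument.
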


\

Let $\hat{Y}^{\varepsilon}:={Y}^{\varepsilon}-\breve{Y}^{\varepsilon}=(\hat{\mathbf{v}}^{\varepsilon},\hat{T}^{\varepsilon}) $.
From (\ref{eq-6}) and (\ref{e-6}), we deduce that $\hat{Y}^{\varepsilon}$ satisfies
\begin{eqnarray}\label{e-6-1}
\left\{
  \begin{array}{ll}
    d\hat{Y}^{\varepsilon}+\varepsilon A\hat{Y}^{\varepsilon}dt+\varepsilon B(\hat{Y}^{\varepsilon}+\breve{Y}^{\varepsilon},\hat{Y}^{\varepsilon}+\breve{Y}^{\varepsilon} )dt+\varepsilon G(\hat{Y}^{\varepsilon}+\breve{Y}^{\varepsilon})dt=0,  \\
    \hat{Y}^{\varepsilon}(0)=\gamma.
  \end{array}
\right.
\end{eqnarray}

\begin{lemma}\label{lem-9}
For any $p\geq 1$,
\begin{eqnarray}\label{eq-15}
\lim_{K\rightarrow \infty}\sup_{0<\varepsilon\leq1}\varepsilon\log P\Big(|\hat{\mathbf{v}}^{\varepsilon}(t)|^{4p}_4>K\Big)=-\infty.
\end{eqnarray}
\end{lemma}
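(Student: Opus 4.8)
The plan is to derive a closed energy inequality for $|\hat{\mathbf v}^{\varepsilon}|_4^4$ directly from the equation \eqref{e-6-1} satisfied by $\hat Y^{\varepsilon}=Y^{\varepsilon}-\breve Y^{\varepsilon}$, and then convert moment bounds into the desired exponential estimate exactly as in Proposition \ref{lem-1}. Since \eqref{e-6-1} is a random PDE with no stochastic integral (the noise has been absorbed into $\breve Y^{\varepsilon}$), the argument is deterministic for each fixed $\omega$: multiply the $\hat{\mathbf v}^{\varepsilon}$-component of \eqref{e-6-1} by $|\hat{\mathbf v}^{\varepsilon}|^2\hat{\mathbf v}^{\varepsilon}$ and integrate over $\mathcal O$. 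First I would treat the viscous term $\langle \varepsilon A\hat{\mathbf v}^{\varepsilon},|\hat{\mathbf v}^{\varepsilon}|^2\hat{\mathbf v}^{\varepsilon}\rangle$, which produces a good dissipative term $c\,\varepsilon\big(|\,|\hat{\mathbf v}^{\varepsilon}|\nabla\hat{\mathbf v}^{\varepsilon}\,|^2+|\,|\hat{\mathbf v}^{\varepsilon}|\,\partial_z\hat{\mathbf v}^{\varepsilon}\,|^2\big)$ plus a gradient of $|\hat{\mathbf v}^{\varepsilon}|^2$ controlled term, using the homogeneous boundary conditions on $\Gamma_l,\Gamma_u,\Gamma_b$.

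The heart of the estimate is the nonlinear term $\varepsilon\,\langle B(\hat Y^{\varepsilon}+\breve Y^{\varepsilon},\hat Y^{\varepsilon}+\breve Y^{\varepsilon})\,,\,|\hat{\mathbf v}^{\varepsilon}|^2\hat{\mathbf v}^{\varepsilon}\rangle$. Expanding $B$ in the two arguments gives four groups of terms; the crucial cancellation is $\langle (\hat{\mathbf v}^{\varepsilon}\cdot\nabla)\hat{\mathbf v}^{\varepsilon}+\Phi(\hat{\mathbf v}^{\varepsilon})\partial_z\hat{\mathbf v}^{\varepsilon},|\hat{\mathbf v}^{\varepsilon}|^2\hat{\mathbf v}^{\varepsilon}\rangle=0$, which holds precisely because $\mathrm{div}\,\hat{\mathbf v}^{\varepsilon}+\partial_z\Phi(\hat{\mathbf v}^{\varepsilon})=0$ and $\hat{\mathbf v}^{\varepsilon}$ vanishes on $\Gamma_l$ while $\Phi(\hat{\mathbf v}^{\varepsilon})=0$ on $\Gamma_u,\Gamma_b$ — this is the $L^p$-type cancellation available for the primitive equations that is \emph{not} available in $V\subset H^1$, and is exactly why we work with $|\mathbf v^\varepsilon|_4$. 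The remaining three groups all carry at least one factor of $\breve{\mathbf v}^{\varepsilon}$ (or $\Phi(\breve{\mathbf v}^{\varepsilon})$), so after integration by parts and Hölder/Gagliardo–Nirenberg one bounds them by
\[
\tfrac{c}{2}\,\varepsilon\,|\,|\hat{\mathbf v}^{\varepsilon}|\nabla\hat{\mathbf v}^{\varepsilon}\,|^2+\tfrac{c}{2}\,\varepsilon\,|\,|\hat{\mathbf v}^{\varepsilon}|\,\partial_z\hat{\mathbf v}^{\varepsilon}\,|^2+\varepsilon\,C\big(1+\|\breve Y^{\varepsilon}\|^2+|A\breve Y^{\varepsilon}|^2\big)\,|\hat{\mathbf v}^{\varepsilon}|_4^4+\varepsilon\,C\big(1+\|\breve Y^{\varepsilon}\|_{\text{higher}}^{q}\big),
\]
absorbing the first two into the dissipation. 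The Coriolis and pressure term $\varepsilon\langle G(\hat Y^{\varepsilon}+\breve Y^{\varepsilon}),|\hat{\mathbf v}^{\varepsilon}|^2\hat{\mathbf v}^{\varepsilon}\rangle$ is linear in the solution and is handled by Young's inequality, producing $\varepsilon C(1+\|\hat T^{\varepsilon}\|^2+\|\breve T^{\varepsilon}\|^2)|\hat{\mathbf v}^{\varepsilon}|_4^4+\varepsilon C$ (one may also need the companion $|\hat T^{\varepsilon}|^2$ or $\|\hat T^{\varepsilon}\|^2$ estimate, obtained in parallel from the $\hat T^{\varepsilon}$-equation, to close the loop; this is routine and can be cited from the $H^1$-estimates underlying Proposition \ref{prp-2}).

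Collecting everything yields
\[
\frac{d}{dt}|\hat{\mathbf v}^{\varepsilon}|_4^4\le \varepsilon\,R^{\varepsilon}(t)\,|\hat{\mathbf v}^{\varepsilon}|_4^4+\varepsilon\,S^{\varepsilon}(t),
\]
with $R^{\varepsilon},S^{\varepsilon}$ polynomial in $\|\breve Y^{\varepsilon}\|$, $|A\breve Y^{\varepsilon}|$, $\|\hat T^{\varepsilon}\|$ and $\int_0^\Upsilon\|Y^{\varepsilon}\|^2dt$. Gronwall then gives
\[
\sup_{t\in[0,\Upsilon]}|\hat{\mathbf v}^{\varepsilon}|_4^4\le |\gamma_1|_4^4\,\exp\Big(\varepsilon\!\int_0^\Upsilon\! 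R^{\varepsilon}\Big)+\varepsilon\!\int_0^\Upsilon\! S^{\varepsilon}\,\exp\Big(\varepsilon\!\int_0^\Upsilon\! R^{\varepsilon}\Big),
\]
and since $\gamma\in L^\infty(V)$, every random factor on the right has finite moments of all orders that are bounded uniformly in $\varepsilon\le1$: this uses \eqref{eq-29}, Lemma \ref{lem-4}, Lemma \ref{lem-10}, and the $H^1$-bounds for $\hat T^{\varepsilon}=T^{\varepsilon}-\breve T^{\varepsilon}$ (which follow from Proposition \ref{prp-2} and Lemma \ref{lem-4}). Hence for every $p\ge1$, $E\big(\sup_{t\in[0,\Upsilon]}|\hat{\mathbf v}^{\varepsilon}|_4^{4p}\big)\le C(p)$ uniformly in $\varepsilon$; choosing $p=1/\varepsilon$ and applying Chebyshev's inequality, exactly as at the end of the proof of Proposition \ref{lem-1}, gives
\[
\varepsilon\log P\big(\sup_{t\in[0,\Upsilon]}|\hat{\mathbf v}^{\varepsilon}(t)|_4^{4}>K\big)\le -\log K + C,
\]
which tends to $-\infty$ as $K\to\infty$, uniformly in $0<\varepsilon\le1$. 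The main obstacle is the nonlinear term: one must carefully track which of the four pieces of the expanded $B$-term genuinely cancel and which survive, and make sure the surviving pieces are controlled by the $|\hat{\mathbf v}^{\varepsilon}|_4^4$-dissipation together with \emph{moment-bounded} (not merely finite) quantities involving $\breve Y^{\varepsilon}$ and $\hat T^{\varepsilon}$, since the final $p=1/\varepsilon$ trick is only as good as the $\varepsilon$-uniformity of those moments.
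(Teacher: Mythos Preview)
Your overall strategy --- pass to the shifted process $\hat Y^{\varepsilon}=Y^{\varepsilon}-\breve Y^{\varepsilon}$, exploit the $L^4$ cancellation $\langle (\hat{\mathbf v}^{\varepsilon}\cdot\nabla)\hat{\mathbf v}^{\varepsilon}+\Phi(\hat{\mathbf v}^{\varepsilon})\partial_z\hat{\mathbf v}^{\varepsilon},|\hat{\mathbf v}^{\varepsilon}|^2\hat{\mathbf v}^{\varepsilon}\rangle=0$, and reduce to a pathwise energy inequality for $|\hat{\mathbf v}^{\varepsilon}|_4^4$ --- is exactly the mechanism behind the paper's proof, which simply cites Proposition~4.3 of \cite{D-G-T-Z} for that inequality. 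So the analytic part of your proposal is on target.

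The gap is in how you close. From your differential inequality you apply the standard Gronwall lemma and obtain the pathwise bound
\[
\sup_{t\in[0,\Upsilon]}|\hat{\mathbf v}^{\varepsilon}|_4^4\le \Big(|\gamma_1|_4^4+\varepsilon\!\int_0^\Upsilon\! S^{\varepsilon}\Big)\exp\Big(\varepsilon\!\int_0^\Upsilon\! R^{\varepsilon}\Big),
\]
and then assert that ``every random factor on the right has finite moments of all orders uniformly in $\varepsilon$'' by citing \eqref{eq-29}, Lemma~\ref{lem-4} and Lemma~\ref{lem-10}. But those results give only \emph{polynomial} moments of $\|\breve Y^{\varepsilon}\|$, $|A\breve Y^{\varepsilon}|$, $\int\|Y^{\varepsilon}\|^2$, etc.; they do not give exponential moments, and there is no reason $E\exp\!\big(p\varepsilon\int_0^\Upsilon R^{\varepsilon}\big)$ should be finite, let alone controlled when $p=1/\varepsilon$ (which is precisely the regime you need for the Chebyshev step). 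In particular, $R^{\varepsilon}$ contains $|A\breve Y^{\varepsilon}|^2$, and $\breve Y^{\varepsilon}$ is driven by $\sqrt{\varepsilon}\,\psi(\varepsilon t,Y^{\varepsilon})\,dW$ with an unbounded $\psi$, so one cannot expect Gaussian-type exponential integrability here. Consequently the bound $E\big(\sup_t|\hat{\mathbf v}^{\varepsilon}|_4^{4p}\big)\le C(p)$ with $C(p)$ growing slowly enough for the $p=1/\varepsilon$ trick is not justified by your argument.

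The paper avoids this exponential altogether: after obtaining (via Proposition~4.3 of \cite{D-G-T-Z}) the \emph{sup-form} inequality
\[
\sup_{t\le\Upsilon}|\hat{\mathbf v}^{\varepsilon}|_4^4\le C\|\gamma\|^4+C\varepsilon\Big(1+\sup_{t\le\Upsilon}|\hat{\mathbf v}^{\varepsilon}|_4^4+\sup_{t\le\Upsilon}|\breve{\mathbf v}^{\varepsilon}|_4^4\Big)\int_0^\Upsilon H\,dt,
\]
it invokes Proposition~A.2 of \cite{D-G-T-Z} (a continuation/absorption lemma) to remove the $\sup|\hat{\mathbf v}^{\varepsilon}|_4^4$ from the right-hand side \emph{without} introducing an exponential, yielding a purely polynomial pathwise bound. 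Taking $L^p$ norms then gives $(E\sup_t|\hat{\mathbf v}^{\varepsilon}|_4^{4p})^{1/p}\le C\|\gamma\|^4+C(p)\varepsilon$, and now the $p=1/\varepsilon$ Chebyshev argument goes through. To repair your proof, replace the naive Gronwall step by this absorption argument (or equivalently cite Propositions~4.3 and~A.2 of \cite{D-G-T-Z}); the rest of your outline is fine.
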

\begin{proof}
Referring to Proposition 4.3 in \cite{D-G-T-Z}, we obtain
\begin{eqnarray*}
\sup_{t\in[0,\Upsilon]}|\hat{\mathbf{v}}^{\varepsilon}(t)|^4_4\leq C\|\gamma\|^4+C\varepsilon(1+\sup_{t\in[0,\Upsilon]}|\hat{\mathbf{v}}^{\varepsilon}(t)|^4_4+\sup_{t\in[0,\Upsilon]}|\breve{\mathbf{v}}^{\varepsilon}(t)|^4_4)
\int^{\Upsilon}_0H(t)dt,
\end{eqnarray*}
where
$
H(t)=(1+\|Y^{\varepsilon}(t)\|^2)(1+|A\breve{Y}^\varepsilon(t)|^4)
$
satisfying  $H(t)\in L^1([0,\Upsilon])$.

Utilizing Proposition A.2 in \cite{D-G-T-Z}, it gives that
\begin{eqnarray*}
\sup_{t\in[0,\Upsilon]}|\hat{\mathbf{v}}^{\varepsilon}(t)|^4_4\leq C\|\gamma\|^4+C\varepsilon(1+\sup_{t\in[0,\Upsilon]}|\breve{\mathbf{v}}^{\varepsilon}(t)|^4_4)
\int^{\Upsilon}_0H(t)dt.
\end{eqnarray*}
By H\"{o}lder inequality, we deduce that for any $p\geq1$,
\begin{eqnarray}\label{eee-2}
(E\sup_{t\in[0,\Upsilon]}|\hat{\mathbf{v}}^{\varepsilon}(t)|^{4p}_4)^{\frac{1}{p}}\leq C\varepsilon+C\|\gamma\|^4+C\varepsilon(E\sup_{t\in[0,\Upsilon]}\|\breve{\mathbf{v}}^{\varepsilon}(t)\|^{8p})^{\frac{1}{p}}+
C\varepsilon\Big(E(\int^{\Upsilon}_0H(t)dt)^{2p}\Big)^{\frac{1}{p}}.
\end{eqnarray}
With the aid of the Young's inequality, we have
\begin{eqnarray}\notag
\Big(E\big(\int^{\Upsilon}_0H(t)dt\big)^{2p}\Big)^{\frac{1}{p}}
&\leq& \Big(E\big(\int^{\Upsilon}_0(1+\|Y^\varepsilon (t)\|^2)(1+|A\breve{Y}^\varepsilon|^4)dt\big)^{2p}\Big)^{\frac{1}{p}}\\ \notag
&\leq& \Big(E\big(\sup_{t\in [0,\Upsilon]}(1+|A\breve{Y}^\varepsilon|^4)\int^{\Upsilon}_0(1+\|Y^\varepsilon (t)\|^2)dt\big)^{2p}\Big)^{\frac{1}{p}}\\
\label{eee-1}
&\leq&C\Big(E\sup_{t\in [0,\Upsilon]}(1+|A\breve{Y}^\varepsilon|^4)^{4p}\Big)^{\frac{1}{p}}+C\Big(E\big(\int^{\Upsilon}_0(1+\|Y^\varepsilon (t)\|^2)dt\big)^{4p}\Big)^{\frac{1}{p}}.
\end{eqnarray}
Based on (\ref{eee-2}) and (\ref{eee-1}), we get
\begin{eqnarray*}\notag
(E\sup_{t\in[0,\Upsilon]}|\hat{\mathbf{v}}^{\varepsilon}(t)|^{4p}_4)^{\frac{1}{p}}
&\leq &C\varepsilon+C\|\gamma\|^4+C\varepsilon(E\sup_{t\in[0,\Upsilon]}\|\breve{\mathbf{v}}^{\varepsilon}(t)\|^{8p})^{\frac{1}{p}}+
C\varepsilon\Big(E\sup_{t\in [0,\Upsilon]}(1+|A\breve{Y}^\varepsilon|^4)^{4p}\Big)^{\frac{1}{p}}\\
&&+C\varepsilon\Big(E\big(\int^\Upsilon_0(1+\|Y^\varepsilon (t)\|^2)dt\big)^{4p}\Big)^{\frac{1}{p}}.
\end{eqnarray*}
By (\ref{eq-29}) and Lemma \ref{lem-10}, we have
\begin{eqnarray}\label{eq-30}
(E\sup_{t\in[0,\Upsilon]}|\hat{\mathbf{v}}^{\varepsilon}(t)|^{4p}_4)^{\frac{1}{p}}
\leq C\|\gamma\|^4+C(p)\varepsilon.
\end{eqnarray}
Using the same argument as the proof of Proposition \ref{lem-1}, we conclude the result.
\end{proof}

Based on Lemma \ref{lem-4}-Lemma \ref{lem-9}, we are ready to prove Proposition \ref{lem-2}.

\begin{flushleft}
\textbf{Proof of Proposition \ref{lem-2}.} \quad
 Since $H^1(\mathcal{O})$ is embedded in $L^4(\mathcal{O})$, we have
\[
\varepsilon \log P\Big(\sup_{t\in[0,\Upsilon]}|\mathbf{v}^{\varepsilon}|^4_4>K\Big)\leq \varepsilon \log P\Big( \sup_{t\in[0,\Upsilon]}\|\breve{Y}^{\varepsilon}(t)\|^4 +\sup_{t\in[0,\Upsilon]}|\hat{\mathbf{v}}^\varepsilon(t)|^4_4>K \Big).
\]
Thus, it suffices to show that
\begin{eqnarray*}
\lim_{K\rightarrow \infty}\sup_{0<\varepsilon\leq1}\varepsilon\log P\Big(\sup_{t\in[0,\Upsilon]}\|\breve{Y}^{\varepsilon}(t)\|^4 +\sup_{t\in[0,\Upsilon]}|\hat{\mathbf{v}}^\varepsilon(t)|^4_4>K\Big)=-\infty.
\end{eqnarray*}
Notice that
\begin{eqnarray*}
&&(E(\sup_{t\in[0,\Upsilon]}|\hat{\mathbf{v}}^\varepsilon(t)|^4_4+\sup_{t\in[0,\Upsilon]}\|\breve{Y}^{\varepsilon}(t)\|^4)^p)^{\frac{1}{p}} \\
&\leq& C(E\sup_{t\in[0,\Upsilon]}|\hat{\mathbf{v}}^\varepsilon(t)|^{4p}_4)^{\frac{1}{p}}
+C(E\sup_{t\in[0,\Upsilon]}\|\breve{Y}^{\varepsilon}(t)\|^{4p})^{\frac{1}{p}}.
\end{eqnarray*}
By (\ref{eq-31}), (\ref{eq-30}) and using the same argument as Proposition \ref{lem-1}, we complete the proof.
\end{flushleft}

$\hfill\blacksquare$

Now, we aim to prove (\ref{eq-13}). Let $\partial_z Y^{\varepsilon}=(\partial_z \mathbf{v}^{\varepsilon}, \partial_z T^{\varepsilon})$. Applying $\partial_z$  to (\ref{eq-6}), it follows that
 \begin{eqnarray*}
&d \partial_z \mathbf{v}^\varepsilon+\varepsilon\partial_z[(\mathbf{v}^\varepsilon\cdot \nabla)\mathbf{v}^\varepsilon+\Phi(\mathbf{v}^\varepsilon)\frac{\partial \mathbf{v}^\varepsilon}{\partial z}]dt+\varepsilon (f{k}\times \partial_z\mathbf{v}^\varepsilon -\nabla T^\varepsilon) dt +\varepsilon L_1 \partial_z\mathbf{v}^\varepsilon dt =\sqrt{\varepsilon}\partial_z\psi_1(\varepsilon t,Y^\varepsilon)dW_1(t),&\\
&d \partial_z T^\varepsilon+\varepsilon\partial_z[(\mathbf{v}^\varepsilon\cdot\nabla)T^\varepsilon+\Phi(\mathbf{v}^\varepsilon)\frac{\partial T^\varepsilon}{\partial z}]dt+ \varepsilon L_2\partial_zT^\varepsilon dt=\sqrt{\varepsilon}\partial_z\psi_2(\varepsilon t,Y^\varepsilon)dW_2(t),&\\
&\int^{0}_{-1}\nabla\cdot \mathbf{v}^\varepsilon  dz=0.&
\end{eqnarray*}
We claim that
\begin{prp}\label{lem-3}
\begin{eqnarray*}
\lim_{K\rightarrow \infty}\sup_{0<\varepsilon\leq1}\varepsilon\log P\Big(\sup_{t\in[0,\Upsilon]}|\partial_z Y^{\varepsilon}|^2+\varepsilon \int^\Upsilon_0 \|\partial_z Y^{\varepsilon}\|^2dt> K\Big)=-\infty.
\end{eqnarray*}
\end{prp}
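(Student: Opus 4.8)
The plan is to derive a closed energy inequality for $\partial_z Y^\varepsilon$ by applying the It\^o formula to $|\partial_z Y^\varepsilon(t)|^2$ using the three evolution equations displayed above, and then to run the same exponential-moment machinery used in Proposition \ref{lem-1}. Concretely, I would first note that, because of the boundary conditions (\ref{eq-8-1})--(\ref{eq-9-1}) ($\partial_z \mathbf{v}^\varepsilon = \partial_z T^\varepsilon = 0$ on $\Gamma_u$ and $\Gamma_b$), the operators $L_1,L_2$ applied after $\partial_z$ still produce good dissipation: testing $\partial_z$ of the momentum and temperature equations against $\partial_z \mathbf{v}^\varepsilon$ and $\partial_z T^\varepsilon$ respectively gives $+2\varepsilon\|\partial_z Y^\varepsilon\|^2$ on the left. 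The Coriolis term $fk\times\partial_z\mathbf{v}^\varepsilon$ is antisymmetric and drops out; the term $\varepsilon(\nabla T^\varepsilon, \partial_z\mathbf{v}^\varepsilon)$ is controlled by $\tfrac14$ of the dissipation plus $C\varepsilon\|Y^\varepsilon\|^2$. The stochastic terms contribute a martingale $M_t := 2\sqrt\varepsilon\int_0^t\langle \partial_z\psi(\varepsilon s,Y^\varepsilon)\,dW, \partial_z Y^\varepsilon\rangle$ and an It\^o correction $\varepsilon\int_0^t\|\partial_z\psi(\varepsilon s,Y^\varepsilon)\|^2_{\mathcal L_2(U;\cdot)}\,ds$, which by Hypothesis H (the $Lip_u(V,\mathcal L_2(U;V))$ part, hence $\|\partial_z\psi\|_{\mathcal L_2}\le \|\psi\|_{\mathcal L_2(U;V)}\le L(1+\|Y^\varepsilon\|)$) is bounded by $\varepsilon L^2\int_0^t(1+\|Y^\varepsilon\|^2)\,ds$.

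The main obstacle is the nonlinear term $\varepsilon\,\partial_z\!\big[(\mathbf{v}^\varepsilon\cdot\nabla)\mathbf{v}^\varepsilon + \Phi(\mathbf{v}^\varepsilon)\partial_z\mathbf{v}^\varepsilon\big]$ tested against $\partial_z\mathbf{v}^\varepsilon$ (and the analogous temperature term). Expanding the $\partial_z$ and integrating by parts in $z$, the genuinely dangerous pieces are of the form $\int (\partial_z\mathbf{v}^\varepsilon\cdot\nabla)\mathbf{v}^\varepsilon\cdot\partial_z\mathbf{v}^\varepsilon$ and $\int(\nabla\cdot\mathbf{v}^\varepsilon)(\partial_z\mathbf{v}^\varepsilon)^2$, exactly the quantities that are handled in \cite{D-G-T-Z} (Proposition 4.3 / the $\partial_z$-estimates there); I would cite that computation to obtain
\[
\big|\varepsilon\langle \partial_z B(Y^\varepsilon,Y^\varepsilon),\partial_z Y^\varepsilon\rangle\big|
\le \tfrac12\varepsilon\|\partial_z Y^\varepsilon\|^2 + C\varepsilon\big(1+|\mathbf{v}^\varepsilon|^4_4 + \|Y^\varepsilon\|^2\big)|\partial_z Y^\varepsilon|^2,
\]
using $H^1\hookrightarrow L^4$ and the fact that the top-order part of the gradient is absorbed by dissipation. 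This is the step where the absence of a cancellation property forces the extra factor $|\mathbf{v}^\varepsilon|^4_4 + \|Y^\varepsilon\|^2$ in front of $|\partial_z Y^\varepsilon|^2$.

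Collecting everything yields, after dropping the nonnegative $\varepsilon\int_0^t\|\partial_z Y^\varepsilon\|^2$ where convenient and keeping it where needed,
\[
|\partial_z Y^\varepsilon(t)|^2 + \varepsilon\int_0^t\|\partial_z Y^\varepsilon\|^2\,ds
\le |\partial_z\gamma|^2 + C\varepsilon\!\int_0^t(1+\|Y^\varepsilon\|^2)\,ds
+ C\varepsilon\!\int_0^t\!\big(1+|\mathbf{v}^\varepsilon|^4_4+\|Y^\varepsilon\|^2\big)|\partial_z Y^\varepsilon|^2\,ds + |M_t|.
\]
Now apply the Gronwall inequality with the random coefficient $C\varepsilon(1+|\mathbf{v}^\varepsilon|^4_4+\|Y^\varepsilon\|^2)$: its time-integral over $[0,\Upsilon]$ has all exponential moments uniformly in $\varepsilon$ by Remark \ref{r-1}, Proposition \ref{lem-2} (just proved) and the bound $E\sup_t|\mathbf{v}^\varepsilon|^4_4$ being finite — or, to be safe, one may again localize with the stopping times $\tau_K$ as in Proposition \ref{lem-1} and note that on $[0,\Upsilon\wedge\tau_K]$ the coefficient is bounded by $C\varepsilon(1+K+\|Y^\varepsilon\|^2)$ with $\int_0^{\Upsilon\wedge\tau_K}\|\partial_z Y^\varepsilon\|^2$ already $\le K$. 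Then estimate the $p$-th moment of the right-hand side: the initial term $|\partial_z\gamma|^2\le\|\gamma\|^2$ is deterministic and finite since $\gamma\in L^\infty(V)$; the drift terms are handled by Remark \ref{r-1} and Proposition \ref{lem-2}; and $(E|M^*_\Upsilon|^p)^{1/p}$ is controlled via the Burkholder inequality (\ref{eq-11}) by $C\sqrt{\varepsilon p}\,L\,(E(\int_0^\Upsilon|\partial_z Y^\varepsilon|^2(1+\|Y^\varepsilon\|^2)ds)^{p/2})^{1/p}$, which is absorbed into $\tfrac12 E\sup_t|\partial_z Y^\varepsilon|^{2p}$ plus lower-order terms exactly as in (\ref{eq-16}). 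One arrives at
\[
\Big(E\big(\sup_{t\in[0,\Upsilon]}|\partial_z Y^\varepsilon|^2+\varepsilon\!\int_0^\Upsilon\|\partial_z Y^\varepsilon\|^2dt\big)^{p}\Big)^{1/p}\le C(p)
\]
with $C(p)$ growing polynomially in $p$ (linearly, as in Proposition \ref{lem-1}). Finally set $p=1/\varepsilon$ and apply the Chebyshev inequality: $\varepsilon\log P(\,\cdots>K)\le -\log K + \log C(1/\varepsilon)$, and since $\log C(1/\varepsilon)=O(\log(1/\varepsilon))$ stays bounded after the $\sup_{0<\varepsilon\le 1}$ is replaced by the corresponding uniform bound — more precisely one checks, as in (\ref{e-1}), that the right-hand side tends to $-\infty$ as $K\to\infty$ uniformly in $\varepsilon\in(0,1]$ — the claim follows. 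The only real work beyond Proposition \ref{lem-1} is the nonlinear $\partial_z$-estimate, which is quoted from \cite{D-G-T-Z}.
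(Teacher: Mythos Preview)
Your overall architecture is right (It\^{o} on $|\partial_z Y^\varepsilon|^2$, the nonlinear $\partial_z$-estimate quoted from \cite{D-G-T-Z}, Burkholder as in (\ref{eq-16}), then Chebyshev with $p=1/\varepsilon$), and it matches the paper's strategy. But there is a genuine gap at the step where you write ``apply the Gronwall inequality with the random coefficient $C\varepsilon(1+|\mathbf{v}^\varepsilon|^4_4+\|Y^\varepsilon\|^2)$.'' After Gronwall the right-hand side carries the factor
\[
\exp\Big\{C\varepsilon\!\int_0^\Upsilon\big(1+|\mathbf{v}^\varepsilon|_4^4+\|Y^\varepsilon\|^2\big)\,ds\Big\},
\]
and to get the $p$-th moment bound with $p=1/\varepsilon$ you would need a \emph{uniform exponential} moment of $\int_0^\Upsilon(|\mathbf{v}^\varepsilon|_4^4+\|Y^\varepsilon\|^2)\,ds$. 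Remark~\ref{r-1} and Proposition~\ref{lem-2} only give polynomial moments (with unspecified growth in $p$), not exponential ones. Your ``safe'' alternative of localizing with $\tau_K$ is circular: $\tau_K=\tau_K^{(1)}\wedge\tau_K^{(2)}$ and $\tau_K^{(2)}$ is defined through $|\partial_z Y^\varepsilon|^2$ and $\varepsilon\int\|\partial_z Y^\varepsilon\|^2$, precisely the quantities you are trying to control; localizing with $\tau_K^{(1)}$ alone still leaves the random factor $\exp\{C\varepsilon\int\|Y^\varepsilon\|^2\}$.

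The paper fixes this by \emph{not} applying Gronwall. Starting from the same pathwise inequality (it cites Propositions 5.2--5.3 in \cite{D-G-T-Z}, where the coefficient is $|\mathbf{v}^\varepsilon|_4^8$ rather than $|\mathbf{v}^\varepsilon|_4^4$), it takes the $L^p(\Omega)$-norm directly and splits the dangerous product by Young/H\"older,
\[
\varepsilon\int_0^\Upsilon|\mathbf{v}^\varepsilon|_4^8\,|\partial_z Y^\varepsilon|^2\,ds
\;\le\; C\varepsilon\,\sup_{[0,\Upsilon]}|\mathbf{v}^\varepsilon|_4^{16}
\;+\;C\varepsilon\Big(\int_0^\Upsilon|\partial_z Y^\varepsilon|^2\,ds\Big)^{2}.
\]
The point that makes this close is the trivial but crucial observation $|\partial_z Y^\varepsilon|^2\le\|Y^\varepsilon\|^2$, so $\big(E(\int_0^\Upsilon|\partial_z Y^\varepsilon|^2ds)^{2p}\big)^{1/p}$ is bounded by (\ref{eq-29}) \emph{without} any self-reference. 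Combined with the moment bound for $\sup|\mathbf{v}^\varepsilon|_4^{16p}$ from the proof of Proposition~\ref{lem-2}, one obtains $(E(\cdots)^p)^{1/p}\le C+C(p)\varepsilon+C(p)\sqrt{\varepsilon p}$ and the Chebyshev argument of (\ref{e-1}) finishes the job. (The temperature component requires an additional term $\varepsilon\int|T^\varepsilon|_4^8\|\partial_z\mathbf{v}^\varepsilon\|^2$, handled by the already-established velocity bound and Lemma~5.1 in \cite{D-G-T-Z}; treating $\mathbf{v}^\varepsilon$ and $T^\varepsilon$ together as you do hides this small bootstrap.) In short: replace the random-Gronwall step by the decoupling $|\partial_z Y^\varepsilon|^2\le\|Y^\varepsilon\|^2$ plus Young's inequality, and your proof goes through.
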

\begin{proof}
Note that there exists a constant $C_p$ independent of $\varepsilon$ such that
\begin{eqnarray*}
&&\Big(E\big(\sup_{t\in[0,\Upsilon]}|\partial_z Y^{\varepsilon}|^2+\varepsilon\int^\Upsilon_0 \|\partial_z Y^{\varepsilon}\|^2dt\big)^p\Big)^{\frac{1}{p}} \\
&\leq&
C_p\Big(E\big(\sup_{t\in[0,\Upsilon]}|\partial_z \mathbf{v}^{\varepsilon}|^2+\varepsilon\int^\Upsilon_0 \|\partial_z \mathbf{v}^{\varepsilon}\|^2dt\big)^p\Big)^{\frac{1}{p}}+C_p\Big(E\big(\sup_{t\in[0,\Upsilon]}|\partial_z T^{\varepsilon}|^2+\varepsilon\int^\Upsilon_0 \|\partial_z T^{\varepsilon}\|^2dt\big)^p\Big)^{\frac{1}{p}}\\
&:=&C_p( I_1+I_2).
\end{eqnarray*}
Let the initial value $y=(\gamma_1,\gamma_2)$. Referring to Proposition 5.2 in \cite{D-G-T-Z}, we have
\begin{eqnarray*}
&&\sup_{t\in[0,\Upsilon]}|\partial_z \mathbf{v}^{\varepsilon}|^2+\varepsilon\int^\Upsilon_0 \|\partial_z \mathbf{v}^{\varepsilon}\|^2dt\\
&\leq& C|\partial_z \gamma_1|^2+\varepsilon\int^\Upsilon_0|\mathbf{v}^{\varepsilon}|^8_4|\partial_z \mathbf{v}^{\varepsilon}|^2dt
+C\varepsilon\int^\Upsilon_0(1+\|Y^{\varepsilon}\|^2)dt\\
&&+2\sqrt{\varepsilon}\sup_{t\in[0,\Upsilon]}|\int^t_0\langle \partial_z \psi_1(\varepsilon s,Y^{\varepsilon})dW,\partial_z \mathbf{v}^{\varepsilon} \rangle|\\
&=&C|\partial_z \gamma_1|^2+\varepsilon\int^\Upsilon_0|\mathbf{v}^{\varepsilon}|^8_4|\partial_z \mathbf{v}^{\varepsilon}|^2dt
+C\varepsilon\int^\Upsilon_0(1+\|Y^{\varepsilon}\|^2)dt+2\sqrt{\varepsilon}M^{\mathbf{v}*}_\Upsilon,
\end{eqnarray*}
where
\[
M^{\mathbf{v}*}_\Upsilon=\sup_{t\in[0,\Upsilon]}|\int^t_0\langle \partial_z \psi_1(\varepsilon s, Y^{\varepsilon})dW(s), \ \partial_z \mathbf{v}^{\varepsilon} \rangle|.
\]
Similar to the proof of  (\ref{eq-16}), we obtain
\begin{eqnarray}\label{e-8}
(E|M^{\mathbf{v}*}_\Upsilon|^p)^{\frac{1}{p}}\leq2C\sqrt{\varepsilon p}L\eta\Big(E\sup_{t\in[0,\Upsilon]}|\partial_z \mathbf{v}^{\varepsilon}|^{2p}\Big)^{\frac{1}{p}}+2C\sqrt{\varepsilon p}L\Big(E\big(\int^\Upsilon_0(1+|\partial_z Y^\varepsilon|^2)dt\big)^p\Big)^{\frac{1}{p}}.
\end{eqnarray}
Then, by H\"{o}lder inequality and (\ref{e-8}), it follows that
\begin{eqnarray*}
&&\Big(E\big(\sup_{t\in[0,\Upsilon]}|\partial_z \mathbf{v}^{\varepsilon}|^2+\varepsilon\int^\Upsilon_0 \|\partial_z \mathbf{v}^{\varepsilon}\|^2dt\big)^p\Big)^{\frac{1}{p}}\\
&\leq& C|\partial_z \gamma_1|^2+C\varepsilon(E\sup_{t\in[0,\Upsilon]}|\mathbf{v}^{\varepsilon}|^{16p}_4)^{\frac{1}{p}}+C\varepsilon\Big(E\big(\int^\Upsilon_0|\partial_z \mathbf{v}^{\varepsilon}|^2dt\big)^{2p}\Big)^{\frac{1}{p}}
\\
&&+C\varepsilon\Big(E\big(\int^\Upsilon_0(1+\|Y^{\varepsilon}\|^2)dt\big)^p\Big)^{\frac{1}{p}}+2C\sqrt{\varepsilon p}L\eta\Big(E\sup_{t\in[0,\Upsilon]}|\partial_z \mathbf{v}^{\varepsilon}|^{2p}\Big)^{\frac{1}{p}}\\
&&+2C\sqrt{\varepsilon p}L\Big(E\big(\int^\Upsilon_0(1+|\partial_z Y^\varepsilon|^2)dt\big)^p\Big)^{\frac{1}{p}}.
\end{eqnarray*}
Choosing $\eta$ sufficiently small such that $2C\sqrt{\varepsilon p}L\eta<\frac{1}{2}$, we deduce that
\begin{eqnarray}\notag
I_1&\leq& C|\partial_z \gamma_1|^2+2C\sqrt{\varepsilon p}L+C\varepsilon(E\sup_{t\in[0,\Upsilon]}|\mathbf{v}^{\varepsilon}|^{16p}_4)^{\frac{1}{p}}\\
\label{eq-34}
&&
+(C\varepsilon+2C\sqrt{\varepsilon p}L)\Big(E\big(\int^\Upsilon_0|\partial_z \mathbf{v}^{\varepsilon}|^2dt\big)^{2p}\Big)^{\frac{1}{p}}+C\varepsilon\Big(E\big(\int^\Upsilon_0(1+\|Y^{\varepsilon}\|^2)dt\big)^p\Big)^{\frac{1}{p}},
\end{eqnarray}
It follows from Proposition \ref{lem-2} and (\ref{eq-29}) that
\begin{eqnarray}\label{eq-35}
I_1\leq C+C(p)\varepsilon+2C(p)\sqrt{\varepsilon p}L.
\end{eqnarray}
Referring to Proposition 5.3 in \cite{D-G-T-Z}, we have
\begin{eqnarray*}
&&\sup_{t\in[0,\Upsilon]}|\partial_z T^{\varepsilon}|^2+\varepsilon\int^\Upsilon_0 \|\partial_z T^{\varepsilon}\|^2dt\\
&\leq& C|\partial_z \gamma_2|^2+C\varepsilon\int^\Upsilon_0(1+|\mathbf{v}^{\varepsilon}|^8_4)|\partial_z T^{\varepsilon}|^2dt+C\varepsilon\int^\Upsilon_0(1+|T^{\varepsilon}|^8_4)\|\partial_z \mathbf{v}^{\varepsilon}\|^2dt\\
&&+C\varepsilon\int^\Upsilon_0(1+\|Y^{\varepsilon}\|^2)dt+2\sqrt{\varepsilon}\sup_{t\in[0,\Upsilon]}|\int^t_0\langle \partial_z \psi_2(\varepsilon s,Y^{\varepsilon})dW,\ \partial_z T^{\varepsilon} \rangle|.
\end{eqnarray*}
Similar to $I_1$, we obtain
\begin{eqnarray*}
&&\Big(E\big(\sup_{t\in[0,\Upsilon]}|\partial_z T^{\varepsilon}|^2+\varepsilon\int^\Upsilon_0 \|\partial_z T^{\varepsilon}\|^2dt\big)^p\Big)^{\frac{1}{p}}\\
&\leq& C|\partial_z \gamma_2|^2+C\varepsilon\left(E\Big(\int^\Upsilon_0(1+\|Y^{\varepsilon}\|^2)dt\Big)^p\right)^{\frac{1}{p}}\\
&&+C\varepsilon\Big(E(1+\sup_{t\in[0,\Upsilon]}|\mathbf{v}^{\varepsilon}|^{16p}_4)\Big)^{\frac{1}{p}}+C\varepsilon\Big(E(\int^\Upsilon_0|\partial_z T^{\varepsilon}|^2dt)^{2p}\Big)^{\frac{1}{p}}\\
&&+C\varepsilon\Big(E(1+\sup_{t\in[0,\Upsilon]}|T^{\varepsilon}|^{16p}_4)\Big)^{\frac{1}{p}}+C\varepsilon\Big(E(\int^\Upsilon_0\|\partial_z \mathbf{v}^{\varepsilon}\|^2dt)^{2p}\Big)^{\frac{1}{p}}\\
&&+2\sqrt{\varepsilon p}L\eta(E\sup_{t\in[0,\Upsilon]}|\partial_z T^{\varepsilon}|^{2p})^{\frac{1}{p}}+2C\sqrt{\varepsilon p}L\Big(E(\int^\Upsilon_0(1+\|Y^{\varepsilon}\|^2)dt)^p\Big)^{\frac{1}{p}}.
\end{eqnarray*}
Taking $\eta$ to be small enough such that $2\sqrt{\varepsilon p}L\eta<\frac{1}{2}$, we arrive at
\begin{eqnarray}\notag
I_2&\leq& C|\partial_z \gamma_2|^2+C\varepsilon\Big(E(\int^\Upsilon_0(1+\|Y^{\varepsilon}\|^2)dt)^p\Big)^{\frac{1}{p}}\\ \notag
&&+C\varepsilon\Big(E(1+\sup_{t\in[0,\Upsilon]}|\mathbf{v}^{\varepsilon}|^{16p}_4)\Big)^{\frac{1}{p}}+C\varepsilon\Big(E(\int^\Upsilon_0|\partial_z T^{\varepsilon}|^2dt)^{2p}\Big)^{\frac{1}{p}}\\
\notag
&&+C\varepsilon\Big(E(1+\sup_{t\in[0,\Upsilon]}|T^{\varepsilon}|^{16p}_4)\Big)^{\frac{1}{p}}+C\varepsilon\Big(E(\int^\Upsilon_0\|\partial_z \mathbf{v}^{\varepsilon}\|^2dt)^{2p}\Big)^{\frac{1}{p}}\\
\label{e-24}
&&+2C\sqrt{\varepsilon p}L\Big(E(\int^\Upsilon_0(1+\|Y^{\varepsilon}\|^2)dt)^p\Big)^{\frac{1}{p}}.
\end{eqnarray}
Utilizing (\ref{eq-34}), Proposition \ref{lem-2}, Proposition \ref{prp-2} and Lemma 5.1 in \cite{D-G-T-Z}, we deduce from (\ref{e-24}) that
\begin{eqnarray}\label{e-9}
I_2\leq C+C(p)\varepsilon+2C(p)\sqrt{\varepsilon p}L.
\end{eqnarray}
From (\ref{eq-35}) and (\ref{e-9}), we have
\begin{eqnarray*}
\Big(E\big(\sup_{t\in[0,\Upsilon]}|\partial_z Y^{\varepsilon}|^2+\varepsilon\int^\Upsilon_0 \|\partial_z Y^{\varepsilon}\|^2dt\big)^p\Big)^{\frac{1}{p}} \leq C(p)(C+C\varepsilon+2C\sqrt{\varepsilon p}L).
\end{eqnarray*}
Applying the same method as (\ref{e-1}) in Proposition \ref{lem-1}, we complete the proof.
\end{proof}

 \begin{flushleft}
 \textbf{Proof of Theorem \ref{thm-2}}.
\quad In view of (\ref{e-10}), (\ref{eeee-2}) can be easily deduced by Proposition \ref{lem-1}, Proposition \ref{lem-2} and Proposition \ref{lem-3}.
\end{flushleft}
$\hfill\blacksquare$

\subsection{Proof of (\ref{eq-8})} \label{sect3.3}
To prove (\ref{eq-8}), we need the following Lemma \ref{lem-5}-Lemma \ref{lem-8}.
Since $D(A)$ is dense in $V$, there exists a sequence $\{\gamma_n\}_{n\geq1}\subset D(A)$ such that
\[
\lim_{n\rightarrow \infty}\|\gamma_n-\gamma\|=0.
\]
Denote by $Z^\varepsilon_n(\cdot)=(Z^{1,\varepsilon}_n, Z^{2,\varepsilon}_n)$ the solution of (\ref{eq-7}) with the initial value $\gamma_n\in D(A)$.
\begin{lemma}\label{lem-5}
For any $n\in \mathbb{Z}^+$,
\begin{eqnarray*}
\lim_{M\rightarrow \infty}\sup_{0<\varepsilon\leq1}\varepsilon\log P\Big(\sup_{t\in[0,\Upsilon]}|AZ^{\varepsilon}_n(t)|^2>M\Big)=-\infty.
\end{eqnarray*}
\end{lemma}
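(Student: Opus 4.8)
The plan is to run an energy estimate at the level of $D(A)$ for the \emph{linear} stochastic equation (\ref{eq-7}) (with initial value $\gamma_n\in D(A)$), exactly in the spirit of Proposition \ref{lem-1}, and then convert the resulting polynomial moment bounds into the claimed super-exponential decay by the Chebyshev-plus-optimize-in-$p$ trick. First I would apply the It\^o formula to $|AZ^\varepsilon_n|^2$; since (\ref{eq-7}) has no drift other than the implicit $A$-free structure (indeed $Z^\varepsilon_n$ solves $dZ^\varepsilon_n=\sqrt\varepsilon\,\psi(\varepsilon s,Z^\varepsilon_n)\,dW$), there is in fact no $AZ^\varepsilon_n$ dissipation term and no nonlinear term to fight — one simply gets
\begin{eqnarray*}
|AZ^\varepsilon_n(t)|^2 &=& |A\gamma_n|^2 + 2\sqrt\varepsilon\int^t_0\langle AZ^\varepsilon_n(s),\, A\psi(\varepsilon s,Z^\varepsilon_n(s))\,dW(s)\rangle \\
&& {}+ \varepsilon\int^t_0\|A\psi(\varepsilon s,Z^\varepsilon_n(s))\|^2_{\mathcal{L}_2(U;H)}\,ds.
\end{eqnarray*}
By Hypothesis H, $\psi\in Bnd_u(V,\mathcal{L}_2(U;D(A)))$, so the last integrand is bounded by $L^2(1+\|Z^\varepsilon_n\|^2)$, and more crucially $\|A\psi(\varepsilon s,Z^\varepsilon_n)\|^2_{\mathcal{L}_2(U;H)}\le L^2(1+\|Z^\varepsilon_n\|^2)$ controls the quadratic variation of the martingale term as well. (Note the estimate only needs the $V$-norm of $Z^\varepsilon_n$, not its $D(A)$-norm — this is what makes the argument close without a Gronwall loop on $|AZ^\varepsilon_n|$.)

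Next I would take $2p$-th moments of the supremum over $[0,\Upsilon]$. For the martingale term I invoke the Burkholder-type inequality (\ref{eq-11}) with the universal constant, giving
\[
\Big(E\sup_{t\in[0,\Upsilon]}\Big|\int^t_0\langle AZ^\varepsilon_n, A\psi(\varepsilon s,Z^\varepsilon_n)\,dW\rangle\Big|^p\Big)^{1/p}
\le C\sqrt p\,\Big(E\Big(\int^\Upsilon_0 |AZ^\varepsilon_n|^2\,L^2(1+\|Z^\varepsilon_n\|^2)\,ds\Big)^{p/2}\Big)^{1/p},
\]
and then split by Young's inequality into $\tfrac12 E\sup_t|AZ^\varepsilon_n|^{2p}$ (absorbed on the left) plus a term involving $E(\int^\Upsilon_0(1+\|Z^\varepsilon_n\|^2)\,ds)^p$, exactly as in (\ref{eq-16}). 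The remaining deterministic integral is handled the same way. Here I need moment bounds of the form $E(\int^\Upsilon_0\|Z^\varepsilon_n\|^2\,ds)^p\le C(p)$ uniformly in $\varepsilon\in(0,1]$ and for the fixed $n$; these follow since $Z^\varepsilon_n$ is just a stochastic convolution-type process driven by a noise with linear growth started from $\gamma_n\in D(A)\subset V$, so Proposition \ref{prp-2}/Remark \ref{r-1} (which apply to this $\varepsilon$-scaled equation by the remark preceding Section \ref{Sect3.2}) yield $E(\int^\Upsilon_0\|Z^\varepsilon_n\|^2dt)^p\le C(p)$. Combining, one obtains
\[
\big(E\sup_{t\in[0,\Upsilon]}|AZ^\varepsilon_n(t)|^{2p}\big)^{1/p}\le C\Big(|A\gamma_n|^2 + \varepsilon C(p) + \sqrt{\varepsilon p}\,C(p)\Big),
\]
where every constant is independent of $\varepsilon$.

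Finally, as in the end of Proposition \ref{lem-1}, I take $p=1/\varepsilon$ and apply Chebyshev's inequality:
\[
\varepsilon\log P\Big(\sup_{t\in[0,\Upsilon]}|AZ^\varepsilon_n(t)|^2>M\Big)
\le -\log M + \log\big(E\sup_{t}|AZ^\varepsilon_n|^{2p}\big)^{1/p}
\le -\log M + \log\big(C(|A\gamma_n|^2 + C(n) )\big),
\]
the right-hand side being bounded above uniformly in $\varepsilon\in(0,1]$ by a constant depending only on $n$. Letting $M\to\infty$ gives the claim. The only point requiring a little care — the "main obstacle," such as it is — is the bookkeeping that keeps all constants independent of $\varepsilon$ while allowing dependence on $n$ through $|A\gamma_n|$, and making sure the Young-splitting constant $\eta$ and the absorbed $\tfrac12$ are chosen before sending $p=1/\varepsilon\to\infty$; since $2C\sqrt{\varepsilon p}L\eta = 2CL\eta$ when $p=1/\varepsilon$, this is a fixed smallness condition on $\eta$, not one depending on $\varepsilon$, so the argument is legitimate.
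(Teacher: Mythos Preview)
Your proposal is correct and follows essentially the same route as the paper, which simply instructs the reader to apply It\^{o}'s formula to $|AZ^{\varepsilon}_n|^2$ and argue as in Lemma~3.2 of \cite{X-Z}. One small imprecision: Proposition~\ref{prp-2}/Remark~\ref{r-1} are stated for $Y$ and $Y^\varepsilon$, not for $Z^\varepsilon_n$, so you cannot literally invoke them for the $V$-moments of $Z^\varepsilon_n$; however, since (\ref{eq-7}) is strictly simpler than (\ref{eq-6}) the required bound follows by the same (easier) It\^{o}--Burkholder--Gronwall argument at the $V$-level, or alternatively you can bypass it entirely by using $\|Z^\varepsilon_n\|\le C|AZ^\varepsilon_n|$ and closing the estimate self-containedly via Gronwall on $|AZ^\varepsilon_n|^2$.
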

\begin{proof}
 It can be proved by applying It\^{o} formula to $|AZ^{\varepsilon}_n|^2$ and using the same argument as Lemma 3.2 in \cite{X-Z}.
\end{proof}
Now, we establish the exponential convergence of $Z^\varepsilon-Z^\varepsilon_n$.
\begin{lemma}\label{lem-7}
For any $\delta>0$,
\begin{eqnarray}\label{eq-24}
\lim_{n\rightarrow \infty} \sup_{0<\varepsilon\leq 1}\varepsilon \log P\Big(\sup_{t\in [0,\Upsilon]}\|Z^\varepsilon(t)-Z^\varepsilon_n(t)\|^2> \delta\Big)=-\infty.
\end{eqnarray}
\end{lemma}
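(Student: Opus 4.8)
The plan is to derive a closed energy inequality for the difference $R^\varepsilon_n:=Z^\varepsilon-Z^\varepsilon_n$, which by \eqref{eq-7} satisfies
\begin{eqnarray*}
R^\varepsilon_n(t)=(\gamma-\gamma_n)+\sqrt{\varepsilon}\int^t_0\big(\psi(\varepsilon s,Z^\varepsilon(s))-\psi(\varepsilon s,Z^\varepsilon_n(s))\big)\,dW(s),
\end{eqnarray*}
and then to push the resulting moment bound through the Chebyshev/Borell type argument already used in Proposition \ref{lem-1}. First I would apply the It\^{o} formula to $\|R^\varepsilon_n(t)\|^2$. Since there is no drift, one gets
\begin{eqnarray*}
\|R^\varepsilon_n(t)\|^2=\|\gamma-\gamma_n\|^2+\varepsilon\int^t_0\|\psi(\varepsilon s,Z^\varepsilon)-\psi(\varepsilon s,Z^\varepsilon_n)\|^2_{\mathcal{L}_2(U;V)}\,ds+2\sqrt{\varepsilon}\,N_t,
\end{eqnarray*}
where $N_t=\int^t_0\langle A^{1/2}(\psi(\varepsilon s,Z^\varepsilon)-\psi(\varepsilon s,Z^\varepsilon_n))\,dW,\,A^{1/2}R^\varepsilon_n\rangle$. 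By the $Lip_u(V,\mathcal{L}_2(U;V))$ part of Hypothesis H the Lebesgue integrand is bounded by $L^2\|R^\varepsilon_n(s)\|^2$, and the quadratic variation of $N$ is bounded by $L^2\int^t_0\|R^\varepsilon_n\|^4\,ds$. Taking $\sup_{t\le\Upsilon}$, then $2p$-th moments for $p\ge2$, and invoking the Davis/Burkholder inequality \eqref{eq-11} for $N$, the martingale term produces $C\sqrt{p}L(E(\int^\Upsilon_0\|R^\varepsilon_n\|^4\,ds)^p)^{1/(2p)}$, which after a Young's inequality splits into $\tfrac12(E\sup_{t\le\Upsilon}\|R^\varepsilon_n(t)\|^{2p})^{1/p}$ plus a remainder controlled by $C\varepsilon p L^2\int^\Upsilon_0(E\sup_{s'\le s}\|R^\varepsilon_n(s')\|^{2p})^{1/p}\,ds$. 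Absorbing the $\tfrac12$ term on the left and applying Gronwall's inequality in the variable $s\mapsto(E\sup_{r\le s}\|R^\varepsilon_n(r)\|^{2p})^{1/p}$ yields
\begin{eqnarray*}
\big(E\sup_{t\in[0,\Upsilon]}\|R^\varepsilon_n(t)\|^{2p}\big)^{1/p}\le C\|\gamma-\gamma_n\|^2\exp\{C\varepsilon p L^2\Upsilon\},
\end{eqnarray*}
with $C$ independent of $\varepsilon$, $p$ and $n$.

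Next I would choose $p=1/\varepsilon$, exactly as in the final display of Proposition \ref{lem-1}, so the exponential factor becomes $e^{CL^2\Upsilon}$, a constant. Chebyshev's inequality then gives
\begin{eqnarray*}
\varepsilon\log P\Big(\sup_{t\in[0,\Upsilon]}\|R^\varepsilon_n(t)\|^2>\delta\Big)\le-\log\delta+\log\big(C e^{CL^2\Upsilon}\|\gamma-\gamma_n\|^2\big)=-\log\frac{\delta}{C e^{CL^2\Upsilon}\|\gamma-\gamma_n\|^2}.
\end{eqnarray*}
The right-hand side is uniform in $\varepsilon\in(0,1]$, so taking $\sup_{0<\varepsilon\le1}$ and then letting $n\to\infty$ — using $\|\gamma_n-\gamma\|\to0$, which is how the sequence $\{\gamma_n\}$ was constructed — sends the bound to $-\infty$. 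This establishes \eqref{eq-24}.

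I do not expect a serious obstacle here; the proof is structurally a simplified version of Proposition \ref{lem-1} because the difference equation has no $A$, no $B$, and no $G$ term, hence no need for the stopping times $\tau_K$ at all. The one point that needs a little care is the Young's-inequality split of the martingale term: one must make sure the coefficient in front of $\sup_{t\le\Upsilon}\|R^\varepsilon_n\|^{2p}$ can be made exactly $\tfrac12$ (or any constant $<1$) \emph{before} the factor $\sqrt{\varepsilon p}=1$ is fixed, i.e. the splitting parameter $\eta$ may be taken independent of $p$ and $\varepsilon$ here since $\sqrt{\varepsilon p}=1$; this is the same device used in \eqref{eq-16}. The other mild subtlety is that the final constant must not blow up with $n$, which is automatic since the only $n$-dependence sits in the prefactor $\|\gamma-\gamma_n\|^2\to0$.
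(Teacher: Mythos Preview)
Your proposal is correct and follows essentially the same approach as the paper: apply It\^{o}'s formula to $\|Z^\varepsilon-Z^\varepsilon_n\|^2$, use the $Lip_u(V,\mathcal{L}_2(U;V))$ part of Hypothesis~H, control the martingale term via \eqref{eq-11} with a Young-type split, apply Gronwall, then set $p\sim 1/\varepsilon$ and use Chebyshev together with $\|\gamma-\gamma_n\|\to0$. The only cosmetic difference is that the paper carries the estimate at the level $(E(\cdot)^p)^{2/p}$ (hence the prefactor $2\|\gamma-\gamma_n\|^4$ and exponent $2L^2\varepsilon^2\Upsilon+8C\varepsilon pL^2\Upsilon$) while you work at the level $(E(\cdot)^p)^{1/p}$; both yield a bound uniform in $\varepsilon\in(0,1]$ once $\varepsilon p$ is fixed.
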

\begin{proof}
From (\ref{eq-7}), we have
 \begin{eqnarray*}
 Z^\varepsilon(t)-Z^\varepsilon_n(t)=\gamma-\gamma_n+\sqrt{\varepsilon}\int^t_0(\psi(\varepsilon s,Z^\varepsilon )-\psi(\varepsilon s,Z^\varepsilon_n))dW(s).
 \end{eqnarray*}
 Applying It\^{o} formula to $\|Z^\varepsilon(t)-Z^\varepsilon_n(t)\|^2$, we have
\begin{eqnarray*}
&&\|Z^\varepsilon(t)-Z^\varepsilon_n(t)\|^2\\
&=&\|\gamma-\gamma_n\|^2+2\sqrt{\varepsilon}\int^t_0\langle A^{\frac{1}{2}}(Z^\varepsilon-Z^\varepsilon_n),
A^{\frac{1}{2}}(\psi(\varepsilon s,Z^\varepsilon)-\psi(\varepsilon s, Z^\varepsilon_n))dW(s)\rangle\\
&&\ +\varepsilon \int^t_0\|\psi(\varepsilon s,Z^\varepsilon)-\psi(\varepsilon s, Z^\varepsilon_n)\|^2_{\mathcal{L}_2(U;V)}ds.
\end{eqnarray*}
By H\"{o}lder inequality, Hypothesis H and (\ref{eq-11}), we obtain
\begin{eqnarray*}
&&\Big(E(\sup_{0\leq s\leq t}\|Z^\varepsilon(s)-Z^\varepsilon_n(s)\|^{2p})\Big)^{\frac{2}{p}}\\
&\leq & 2\|\gamma-\gamma_n\|^4+(2L^2\varepsilon^2+8C\varepsilon p L^2)\int^t_0\Big(E(\sup_{0\leq r\leq s}\|Z^\varepsilon(r)-Z^\varepsilon_n(r)\|^{2p})\Big)^{\frac{2}{p}}ds.
\end{eqnarray*}
Utilizing Gronwall inequality, we get
\begin{eqnarray*}
\Big(E(\sup_{0\leq t\leq \Upsilon}\|Z^\varepsilon(t)-Z^\varepsilon_n(t)\|^{2p})\Big)^{\frac{2}{p}}
\leq 2\|\gamma-\gamma_n\|^4\exp\left\{2L^2\varepsilon^2\Upsilon+8C\varepsilon p L^2\Upsilon\right\}.
\end{eqnarray*}
Applying the same argument as the proof of (\ref{e-1}) in Proposition \ref{lem-1}, we complete the proof.
\end{proof}

Let $Y^\varepsilon_n(\cdot)$ be the solution of (\ref{eq-6}) with the initial value $\gamma_n$. It follows from Theorem \ref{thm-2} that
\begin{eqnarray}\label{eq-18}
\lim_{M\rightarrow \infty}\sup_{0<\varepsilon\leq1}\varepsilon\log P\Big((|Y^{\varepsilon}_n|_V(\Upsilon))^2>M\Big)=-\infty \quad \forall \  n\in \mathbb{Z}^+.
\end{eqnarray}
Then, we verify the exponential convergence of $Y^\varepsilon-Y^\varepsilon_n$.
\begin{lemma}\label{lem-6}
For any $\delta >0$,
\begin{eqnarray*}
\lim_{n\rightarrow \infty}\sup_{0<\varepsilon\leq1}\varepsilon\log P\Big(\sup_{t\in [0,\Upsilon]}\|Y^\varepsilon(t)-Y^\varepsilon_n(t)\|^2> \delta\Big)=-\infty.
\end{eqnarray*}
\end{lemma}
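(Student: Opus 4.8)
The plan is to estimate the difference $R^\varepsilon_n := Y^\varepsilon - Y^\varepsilon_n$ directly by an It\^o/energy argument, but localized on the event where the $V$-norms of $Y^\varepsilon$ and $Y^\varepsilon_n$ stay bounded, so that the non-cancelling nonlinear terms of the 3D primitive equations can be controlled. Subtracting the equations (\ref{eq-6}) for $Y^\varepsilon$ and for $Y^\varepsilon_n$, $R^\varepsilon_n$ satisfies
\begin{eqnarray*}
dR^\varepsilon_n + \varepsilon A R^\varepsilon_n\,dt + \varepsilon\big(B(Y^\varepsilon,Y^\varepsilon)-B(Y^\varepsilon_n,Y^\varepsilon_n)\big)dt + \varepsilon\big(G(Y^\varepsilon)-G(Y^\varepsilon_n)\big)dt = \sqrt{\varepsilon}\big(\psi(\varepsilon t,Y^\varepsilon)-\psi(\varepsilon t,Y^\varepsilon_n)\big)dW,
\end{eqnarray*}
with $R^\varepsilon_n(0) = \gamma - \gamma_n$. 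Writing $B(Y^\varepsilon,Y^\varepsilon)-B(Y^\varepsilon_n,Y^\varepsilon_n) = B(R^\varepsilon_n,Y^\varepsilon) + B(Y^\varepsilon_n,R^\varepsilon_n)$, applying It\^o's formula to $\|R^\varepsilon_n\|^2$, and using the bilinear estimate behind Theorem 3.2 of \cite{D-G-T-Z} together with Hypothesis H for the Lipschitz bound on $\psi$ in $V$, I would arrive at an inequality of the shape
\begin{eqnarray*}
\|R^\varepsilon_n(t)\|^2 + \varepsilon\int^t_0 |A R^\varepsilon_n|^2 ds \leq \|\gamma-\gamma_n\|^2 + \varepsilon C\int^t_0 \Lambda(s)\,\|R^\varepsilon_n\|^2 ds + 2\sqrt{\varepsilon}\,\big|\textstyle\int^t_0\langle A^{1/2}(\psi(\varepsilon s,Y^\varepsilon)-\psi(\varepsilon s,Y^\varepsilon_n))dW, A^{1/2}R^\varepsilon_n\rangle\big|,
\end{eqnarray*}
where $\Lambda(s)$ is a polynomial in the quantities $(|Y^\varepsilon|_V(s))^2$ and $(|Y^\varepsilon_n|_V(s))^2$ (and in the auxiliary norms $|\mathbf v^\varepsilon|_4^4$, $|\partial_z Y^\varepsilon|^2$, etc.) that appear in the nonlinear estimates.

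To make this usable, I would localize: define the stopping time $\sigma_M = \sigma_M^\varepsilon \wedge \sigma_M^{\varepsilon,n}$ where $\sigma_M^\varepsilon = \inf\{t : (|Y^\varepsilon|_V(t))^2 > M\}$ and similarly for $Y^\varepsilon_n$, so that on $[0,\Upsilon\wedge\sigma_M]$ one has $\Lambda(s) \leq P(M)$ for a fixed polynomial $P$. Then Gronwall gives
\begin{eqnarray*}
(|R^\varepsilon_n|_V(\Upsilon\wedge\sigma_M))^2 \leq \Big(\|\gamma-\gamma_n\|^2 + 2\sqrt{\varepsilon}\sup_{t\leq\Upsilon\wedge\sigma_M}\big|\textstyle\int^t_0\langle\cdots\rangle\big|\Big)\exp\{\varepsilon C P(M)\Upsilon\},
\end{eqnarray*}
and, exactly as in the proof of Proposition \ref{lem-1}, I would take $L^{2p}$-moments, use the Burkholder-type inequality (\ref{eq-11}) to absorb the stochastic term (its quadratic variation is $\leq \varepsilon L^2\int_0^{\Upsilon\wedge\sigma_M}\|R^\varepsilon_n\|^2(1+\|R^\varepsilon_n\|^2)ds$, which near $\sigma_M$ is bounded by a constant times $\int\|R^\varepsilon_n\|^2$), apply Gronwall again, set $p = 1/\varepsilon$, and invoke Chebyshev. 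This yields
\begin{eqnarray*}
\varepsilon\log P\Big(\sup_{t\leq\Upsilon\wedge\sigma_M}\|R^\varepsilon_n(t)\|^2 > \delta\Big) \leq -\log\frac{\delta}{\text{(const)}} + \log\|\gamma-\gamma_n\|^2 + \varepsilon C(M),
\end{eqnarray*}
so letting first $\varepsilon\to0$ and then $n\to\infty$ (using $\|\gamma-\gamma_n\|\to0$) sends this to $-\infty$ for each fixed $M$.

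Finally I would remove the localization by the standard splitting
\begin{eqnarray*}
P\Big(\sup_{t\leq\Upsilon}\|R^\varepsilon_n\|^2>\delta\Big) \leq P\Big(\sup_{t\leq\Upsilon\wedge\sigma_M}\|R^\varepsilon_n\|^2>\delta\Big) + P\big((|Y^\varepsilon|_V(\Upsilon))^2>M\big) + P\big((|Y^\varepsilon_n|_V(\Upsilon))^2>M\big),
\end{eqnarray*}
using $\varepsilon\log(a_\varepsilon+b_\varepsilon+c_\varepsilon)\leq \varepsilon\log 3 + \max$, then letting $\varepsilon\to0$, $n\to\infty$, and at the end $M\to\infty$: the last two terms go to $-\infty$ by Theorem \ref{thm-2} and (\ref{eq-18}), and the first by the localized estimate just obtained. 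The main obstacle I anticipate is the nonlinear term $\varepsilon\langle B(Y^\varepsilon,Y^\varepsilon)-B(Y^\varepsilon_n,Y^\varepsilon_n), AR^\varepsilon_n\rangle$: controlling $\langle B(Y^\varepsilon_n,R^\varepsilon_n),AR^\varepsilon_n\rangle$ and $\langle B(R^\varepsilon_n,Y^\varepsilon),AR^\varepsilon_n\rangle$ requires exactly the anisotropic $|\mathbf v|_4$-type estimates that force the introduction of the auxiliary bounds in Theorem \ref{thm-2}, and one must check that the resulting coefficient $\Lambda$ is genuinely bounded on the localizing event (which is why $\sigma_M$ must involve $(|Y^\varepsilon|_V)^2$, not merely $\sup\|Y^\varepsilon\|^2$), so that the $\exp\{\varepsilon C P(M)\Upsilon\}$ factor stays harmless after taking $p=1/\varepsilon$.
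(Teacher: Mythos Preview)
Your strategy matches the paper's: localize by a stopping time that caps both $\sup_t\|Y^\varepsilon\|^2$ and $\varepsilon\int_0^t|AY^\varepsilon|^2$ (and likewise for $Y^\varepsilon_n$), derive a $V$-energy inequality for $R^\varepsilon_n$, control the bilinear term, take high moments with (\ref{eq-11}), apply Chebyshev with $p\sim 1/\varepsilon$, and finally remove the localization via Theorem~\ref{thm-2} and (\ref{eq-18}). The paper differs only cosmetically: instead of Gronwall after localization, it applies It\^o directly to the weighted quantity $\chi(t)\|R^\varepsilon_n(t)\|^2$ with $\chi(t)=\exp\{-l\varepsilon\int_0^t(\|Y^\varepsilon\|^2|AY^\varepsilon|^2+\|Y^\varepsilon_n\|^2|AY^\varepsilon_n|^2)ds\}$, and for the nonlinear difference it cites the ready-made estimate (5.9) of \cite{D-G-T},
\[
|\langle B(Y^\varepsilon,Y^\varepsilon)-B(Y^\varepsilon_n,Y^\varepsilon_n),A R^\varepsilon_n\rangle|\le \tfrac14|AR^\varepsilon_n|^2+C(\|Y^\varepsilon\|^2|AY^\varepsilon|^2+\|Y^\varepsilon_n\|^2|AY^\varepsilon_n|^2)\|R^\varepsilon_n\|^2,
\]
rather than splitting via $B(R,Y^\varepsilon)+B(Y^\varepsilon_n,R)$ and Theorem~3.2 of \cite{D-G-T-Z}.

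Two imprecisions worth tightening. First, the coefficient $\Lambda(s)$ coming from the bilinear estimate involves $|AY^\varepsilon(s)|^2$ \emph{pointwise}; it is not a polynomial in $(|Y^\varepsilon|_V(s))^2$ and is not pointwise bounded by $P(M)$ on your event. What is bounded is the integral $\varepsilon\int_0^{\sigma_M}\Lambda(s)\,ds\le CM^2$, so the Gronwall factor is $\exp\{CM^2\}$ rather than $\exp\{\varepsilon CP(M)\Upsilon\}$. This is harmless: after Chebyshev with $p\sim 1/\varepsilon$ one still obtains $\varepsilon\log P(\cdots)\le C(M)-\log\delta+\log\|\gamma-\gamma_n\|^2+O(1)$, which tends to $-\infty$ as $n\to\infty$, uniformly in $\varepsilon\in(0,1]$. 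Second, the lemma requires $\lim_{n\to\infty}\sup_{0<\varepsilon\le1}$, so the phrasing ``first $\varepsilon\to0$ then $n\to\infty$'' is the wrong order; fortunately the bound you derive is already uniform in $\varepsilon$, so the correct order follows immediately.
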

\begin{proof}
 For any $n$ and $K>0$, define stopping times
\begin{eqnarray*}
\tau_{\varepsilon,K}&:=&\inf\Big\{t: \varepsilon \int^t_0 |A Y^{\varepsilon}(r)|^2dr>K, \ {\rm{or}}\ \|Y^{\varepsilon}(t)\|^2>K\Big\},\\
\tau^n_{\varepsilon,K}&:=&\inf\Big\{t: \varepsilon \int^t_0 |A Y^{\varepsilon}_n(r)|^2dr>K, \ {\rm{or}} \ \|Y^{\varepsilon}_n(t)\|^2>K\Big\},\\
\tau^n_\varepsilon&:=& \tau_{\varepsilon,K}\wedge\tau^n_{\varepsilon,K}.
\end{eqnarray*}
Clearly,
\begin{eqnarray}\notag
&&P\Big(\sup_{t\in [0,\Upsilon]}\|Y^\varepsilon(t)-Y^\varepsilon_n(t)\|^2> \delta, \ (|Y^\varepsilon|_V(\Upsilon))^2 \leq K, \ (|Y^\varepsilon_n|_V(\Upsilon))^2 \leq K\Big)\\
\label{e-14}
&\leq& P\Big(\sup_{t\in [0,\Upsilon\wedge \tau^n_\varepsilon]}\|Y^\varepsilon(t)-Y^\varepsilon_n(t)\|^2> \delta\Big).
\end{eqnarray}
Let $l$ be a positive constant. Applying It\^{o} formula to
\[
{\rm{e}}^{-l\varepsilon \int^{t\wedge \tau^n_\varepsilon}_0(\|Y^\varepsilon\|^2|AY^\varepsilon|^2+\|Y^\varepsilon_n\|^2|AY^\varepsilon_n|^2)ds}
\|Y^\varepsilon(t\wedge \tau^n_\varepsilon)-Y^\varepsilon_n(t\wedge \tau^n_\varepsilon)\|^2:=\chi(t\wedge \tau^n_\varepsilon)\|Y^\varepsilon(t\wedge \tau^n_\varepsilon)-Y^\varepsilon_n(t\wedge \tau^n_\varepsilon)\|^2,
\]
 we get
\begin{eqnarray*}
&&\chi(t\wedge \tau^n_\varepsilon)\|Y^\varepsilon(t\wedge \tau^n_\varepsilon)-Y^\varepsilon_n(t\wedge \tau^n_\varepsilon)\|^2
+2\varepsilon  \int^{t\wedge \tau^n_\varepsilon}_0\chi(s)|AY^\varepsilon-AY^\varepsilon_n|^2ds\\
&=&\|\gamma-\gamma_n\|^2-l\varepsilon\int^{t\wedge \tau^n_\varepsilon}_0\chi(s)(\|Y^\varepsilon\|^2|AY^\varepsilon|^2+\|Y^\varepsilon_n\|^2|AY^\varepsilon_n|^2)\|Y^\varepsilon-Y^\varepsilon_n\|^2ds\\
&&-2\varepsilon\int^{t\wedge \tau^n_\varepsilon}_0\chi(s)\langle B(Y^\varepsilon,Y^\varepsilon )-B(Y^\varepsilon_n,Y^\varepsilon_n ),\ A(Y^\varepsilon-Y^\varepsilon_n)\rangle ds\\
&&-2\varepsilon\int^{t\wedge \tau^n_\varepsilon}_0\chi(s)\langle G(Y^\varepsilon)-G(Y^\varepsilon_n), A(Y^\varepsilon-Y^\varepsilon_n)\rangle ds\\
&&+\varepsilon\int^{t\wedge \tau^n_\varepsilon}_0\chi(s)\|\psi(\varepsilon s,Y^\varepsilon(s) )-\psi(\varepsilon s,Y^\varepsilon_n(s) )\|^2_{\mathcal{L}_2(U;V)}ds\\
&&+2\sqrt{\varepsilon}\int^{t\wedge \tau^n_\varepsilon}_0\chi(s)\langle (\psi(\varepsilon s,Y^\varepsilon(s) )-\psi(\varepsilon s,Y^\varepsilon_n(s) ))dW(s),\  A(Y^\varepsilon-Y^\varepsilon_n) \rangle.
\end{eqnarray*}
Recall (5.9) in \cite{D-G-T}, it gives
\begin{eqnarray}\notag
&&|\langle B(Y^\varepsilon,Y^\varepsilon )-B(Y^\varepsilon_n,Y^\varepsilon_n ), A(Y^\varepsilon-Y^\varepsilon_n)\rangle|\\
\label{e-11}
&\leq& \frac{1}{4}|A(Y^\varepsilon-Y^\varepsilon_n)|^2
+C(\|Y^\varepsilon\|^2|AY^\varepsilon|^2+\|Y^\varepsilon_n\|^2|AY^\varepsilon_n|^2)\|Y^\varepsilon-Y^\varepsilon_n\|^2.
\end{eqnarray}
By H\"{o}lder inequality and the Young's inequality, we have
\begin{eqnarray}\label{e-12}
|\langle G(Y^\varepsilon)-G(Y^\varepsilon_n), A(Y^\varepsilon-Y^\varepsilon_n)\rangle|
\leq \frac{1}{2}|AY^\varepsilon-AY^\varepsilon_n|^2+C\|Y^\varepsilon-Y^\varepsilon_n\|^2.
\end{eqnarray}
Thus, we deduce from Hypothesis H, (\ref{e-11}) and (\ref{e-12}) that
\begin{eqnarray}\notag
&&\chi(t\wedge \tau^n_\varepsilon)\|Y^\varepsilon(t\wedge \tau^n_\varepsilon)-Y^\varepsilon_n(t\wedge \tau^n_\varepsilon)\|^2
+2\varepsilon  \int^{t\wedge \tau^n_\varepsilon}_0\chi(s)|AY^\varepsilon-AY^\varepsilon_n|^2ds\\ \notag
&\leq&\|\gamma-\gamma_n\|^2-l\varepsilon\int^{t\wedge \tau^n_\varepsilon}_0\chi(s)(\|Y^\varepsilon\|^2|AY^\varepsilon|^2+\|Y^\varepsilon_n\|^2|AY^\varepsilon_n|^2)\|Y^\varepsilon-Y^\varepsilon_n\|^2ds\\ \notag
&&+\frac{\varepsilon}{2}\int^{t\wedge \tau^n_\varepsilon}_0\chi(s)|AY^\varepsilon-AY^\varepsilon_n|^2ds+2\varepsilon L\int^{t\wedge \tau^n_\varepsilon}_0\chi(s)\|Y^\varepsilon-Y^\varepsilon_n\|^2ds\\
\notag
&&+2\varepsilon C\int^{t\wedge \tau^n_\varepsilon}_0\chi(s)(\|Y^\varepsilon\|^2|AY^\varepsilon|^2+\|Y^\varepsilon_n\|^2|AY^\varepsilon_n|^2)\|Y^\varepsilon-Y^\varepsilon_n\|^2ds\\
\notag
&&+\varepsilon\int^{t\wedge \tau^n_\varepsilon}_0\chi(s)|AY^\varepsilon-AY^\varepsilon_n|^2 ds
+ \varepsilon C \int^{t\wedge \tau^n_\varepsilon}_0\chi(s)\|Y^\varepsilon -Y^\varepsilon_n\|^2ds\\
\label{e-13}
&&+2\sqrt{\varepsilon}\int^{t\wedge \tau^n_\varepsilon}_0\chi(s)\langle (\psi(\varepsilon s,Y^\varepsilon(s) )-\psi(\varepsilon s,Y^\varepsilon_n(s) ))dW(s),\  A(Y^\varepsilon-Y^\varepsilon_n) \rangle.
\end{eqnarray}
Choosing $l>2C$ in (\ref{e-13}), we obtain
\begin{eqnarray*}
&&\chi(t\wedge \tau^n_\varepsilon)\|Y^\varepsilon(t\wedge \tau^n_\varepsilon)-Y^\varepsilon_n(t\wedge \tau^n_\varepsilon)\|^2\\
&\leq&\|\gamma-\gamma_n\|^2+2\varepsilon L\int^{t\wedge \tau^n_\varepsilon}_0\chi(s)\|Y^\varepsilon-Y^\varepsilon_n\|^2ds+C\varepsilon \int^{t\wedge \tau^n_\varepsilon}_0\chi(s)\|Y^\varepsilon-Y^\varepsilon_n\|^2ds\\
&&+2\sqrt{\varepsilon}\int^{t\wedge \tau^n_\varepsilon}_0\chi(s)\langle (\psi(\varepsilon s,Y^\varepsilon(s) )-\psi(\varepsilon s,Y^\varepsilon_n(s) ))dW(s),\  A(Y^\varepsilon-Y^\varepsilon_n) \rangle.
\end{eqnarray*}
Utilizing H\"{o}lder inequality, we get
\begin{eqnarray*}
&&\Big(E\big(\sup_{0\leq s\leq t\wedge \tau^n_\varepsilon}(\chi(s)\|Y^\varepsilon(s)-Y^\varepsilon_n(s)\|^2)\big)^p\Big)^{\frac{2}{p}}\\
&\leq&2\|\gamma-\gamma_n\|^4+4\varepsilon^2 L^2\int^{t}_0\Big(E\big(\sup_{0\leq r\leq s\wedge \tau^n_\varepsilon}(\chi(r)\|Y^\varepsilon-Y^\varepsilon_n\|^2)^p\big)\Big)^{\frac{2}{p}}ds\\
&&+C\varepsilon^2 \int^{t}_0\Big(E\big(\sup_{0\leq r\leq s\wedge \tau^n_\varepsilon}(\chi(r)\|Y^\varepsilon-Y^\varepsilon_n\|^2)^p\big)\Big)^{\frac{2}{p}}ds+ 16C\varepsilon p L^2 \int^t_0\Big(E\big(\sup_{0\leq r\leq s\wedge \tau^n_\varepsilon}(\chi^2(r)\|Y^\varepsilon-Y^\varepsilon_n\|^4\big)^{\frac{p}{2}}\Big)^{\frac{2}{p}}ds\\
&\leq&2\|\gamma-\gamma_n\|^4+4\varepsilon^2 L^2\int^{t}_0\Big(E\big(\sup_{0\leq r\leq s\wedge \tau^n_\varepsilon}(\chi(r)\|Y^\varepsilon-Y^\varepsilon_n\|^2)^p\big)\Big)^{\frac{2}{p}}ds\\
&&+C\varepsilon^2 \int^{t}_0\Big(E\big(\sup_{0\leq r\leq s\wedge \tau^n_\varepsilon}(\chi(r)\|Y^\varepsilon-Y^\varepsilon_n\|^2)^p\big)\Big)^{\frac{2}{p}}ds+ 16C\varepsilon p L^2 \int^{t}_0\Big(E\big(\sup_{0\leq r\leq s\wedge \tau^n_\varepsilon}(\chi(r)\|Y^\varepsilon-Y^\varepsilon_n\|^2)^p\big)\Big)^{\frac{2}{p}}ds.
\end{eqnarray*}
Applying Gronwall inequality, we have
\begin{eqnarray*}
\Big(E\big(\sup_{0\leq t\leq \Upsilon\wedge \tau^n_\varepsilon}(\chi(t)\|Y^\varepsilon(t)-Y^\varepsilon_n(t)\|^2)\big)^p\Big)^{\frac{2}{p}}
\leq 2\|\gamma-\gamma_n\|^4 {\rm{e}}^{4\varepsilon^2 L^2\Upsilon+16C\varepsilon p L^2\Upsilon+C\varepsilon^2 \Upsilon}.
\end{eqnarray*}
Thus, we get
\begin{eqnarray}\notag
&&\Big(E(\sup_{0\leq t\leq \Upsilon\wedge \tau^n_\varepsilon}\|Y^\varepsilon(t)-Y^\varepsilon_n(t)\|^2\big)^p\Big)^{\frac{2}{p}}\\ \notag
&\leq&  (E[\sup_{0\leq t\leq \Upsilon\wedge \tau^n_\varepsilon}(\chi(t)\|Y^\varepsilon(t)-Y^\varepsilon_n(t)\|^2)^p\chi^{-p}( \Upsilon\wedge \tau^n_\varepsilon)])^{\frac{2}{p}}\\ \notag
&\leq&  {\rm{e}}^{8lK^2}\Big(E\sup_{0\leq t\leq \Upsilon\wedge \tau^n_\varepsilon}\big(\chi(t)\|Y^\varepsilon(t)-Y^\varepsilon_n(t)\|^2\big)^p\Big)^{\frac{2}{p}}\\
\label{e-23}
&\leq& 2{\rm{e}}^{8lK^2}\|\gamma-\gamma_n\|^4{\rm{e}}^{4\varepsilon^2 L^2\Upsilon+16C\varepsilon p L^2\Upsilon+C\varepsilon^2 \Upsilon}.
\end{eqnarray}
Taking $p=\frac{2}{\varepsilon}$ in (\ref{e-23}),
\begin{eqnarray}\notag
&&\sup_{0<\varepsilon \leq 1}\varepsilon \log P\Big(\sup_{t\in [0,\Upsilon\wedge \tau^n_\varepsilon]}\|Y^\varepsilon(t)-Y^\varepsilon_n(t)\|^2> \delta\Big)\\ \notag
&\leq& \sup_{0<\varepsilon \leq 1}\varepsilon \log \frac{E(\sup_{0\leq t\leq \Upsilon\wedge \tau^n_\varepsilon}\|Y^\varepsilon(t)-Y^\varepsilon_n(t)\|^2)^p}{\delta^p}\\
\label{eq-19}
&\leq& 8lK^2+4 L^2\Upsilon+32C L^2\Upsilon+C\Upsilon-2\log \delta +\log (2\|\gamma-\gamma_n\|^4).
\end{eqnarray}
From  (\ref{e-14}) and (\ref{eq-19}), letting $n\rightarrow \infty$ on both sides of (\ref{eq-19}), we have
\begin{eqnarray}\label{e-15}
\sup_{0<\varepsilon \leq 1}\varepsilon \log P\Big(\sup_{t\in [0,\Upsilon]}\|Y^\varepsilon(t)-Y^\varepsilon_n(t)\|^2> \delta, \ (|Y^\varepsilon|_V(\Upsilon))^2 \leq K, \ (|Y^\varepsilon_n|_V(\Upsilon))^2 \leq K\Big)\rightarrow -\infty \quad \forall K.
\end{eqnarray}
Recall Theorem \ref{thm-2}, for any $R>0$, there exists a constant $M_1$ such that
 \begin{eqnarray}\label{eq-21}
\sup_{0<\varepsilon \leq 1}\varepsilon \log P((|Y^\varepsilon|_V(\Upsilon))^2>M_1)\leq -R.
 \end{eqnarray}
Moreover, for such $R$, by (\ref{eq-18}), there exists a constant $M_2$ such that
\begin{eqnarray}\label{eq-22}
\sup_{0<\varepsilon \leq 1}\varepsilon \log P((|Y^\varepsilon_n|_V(\Upsilon))^2>M_2)\leq -R \quad \forall n,
\end{eqnarray}
and by (\ref{e-15}), there exists a positive integer $N$, such that for any $n\geq N$,
\begin{eqnarray}\label{e-17}
\sup_{0<\varepsilon \leq 1}\varepsilon \log P\Big(\sup_{t\in [0,\Upsilon]}\|Y^\varepsilon(t)-Y^\varepsilon_n(t)\|^2> \delta, \ (|Y^\varepsilon|_V(\Upsilon))^2 \leq K, \ (|Y^\varepsilon_n|_V(\Upsilon))^2 \leq K\Big)\leq -R \quad \forall K.
\end{eqnarray}
Let $n>N$ in (\ref{eq-22}) and $K=M:=M_1\vee M_2$ in (\ref{e-17}), we deduce that for any $n\geq N$,
\begin{eqnarray*}
&&\sup_{0<\varepsilon \leq 1}\varepsilon \log P\Big(\sup_{t\in [0,\Upsilon]}\|Y^\varepsilon(t)-Y^\varepsilon_n(t)\|^2> \delta\Big)\\
&\leq&  \sup_{0<\varepsilon \leq 1}\varepsilon \log P\Big(\sup_{t\in [0,\Upsilon]}\|Y^\varepsilon(t)-Y^\varepsilon_n(t)\|^2> \delta, \ (|Y^\varepsilon|_V(\Upsilon))^2 \leq M, \ (|Y^\varepsilon_n|_V(\Upsilon))^2 \leq M\Big)\\
&&+\sup_{0<\varepsilon \leq 1}\varepsilon \log P((|Y^\varepsilon|_V(\Upsilon))^2>M)
+\sup_{0<\varepsilon \leq 1}\varepsilon \log P((|Y^\varepsilon_n|_V(\Upsilon))^2>M)\\
&\leq&-3R.
\end{eqnarray*}
Since $R$ is arbitrary, we complete the proof.
\end{proof}

Finally, we study the exponential convergence of $Y^\varepsilon_n-Z^\varepsilon_n$.
\begin{lemma}\label{lem-8}
For any $\delta>0$ and every positive integer $n$,
\begin{eqnarray}\label{eq-25}
\lim_{\varepsilon\rightarrow 0} \varepsilon \log P\Big(\sup_{t\in [0,\Upsilon]}\|Y^\varepsilon_n(t)-Z^\varepsilon_n(t)\|^2> \delta\Big)=-\infty.
\end{eqnarray}
\end{lemma}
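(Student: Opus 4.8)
The plan is to follow the strategy of Lemma \ref{lem-6}; the new feature is that $Z^\varepsilon_n$ carries no drift, so the difference $R^\varepsilon_n:=Y^\varepsilon_n-Z^\varepsilon_n$ solves
\[
dR^\varepsilon_n+\varepsilon\big(AY^\varepsilon_n+B(Y^\varepsilon_n,Y^\varepsilon_n)+G(Y^\varepsilon_n)\big)dt=\sqrt\varepsilon\big(\psi(\varepsilon t,Y^\varepsilon_n)-\psi(\varepsilon t,Z^\varepsilon_n)\big)dW(t),\qquad R^\varepsilon_n(0)=0,
\]
so the entire discrepancy is produced by an $O(\varepsilon)$ drift starting from zero. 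First I would localize. For $K>0$ set
\[
\sigma^{(1)}_{\varepsilon,K}:=\inf\Big\{t:\ \|Y^\varepsilon_n(t)\|^2>K\ \text{or}\ \varepsilon\int_0^t|AY^\varepsilon_n|^2ds>K\Big\},\qquad \sigma^{(2)}_{\varepsilon,K}:=\inf\big\{t:\ |AZ^\varepsilon_n(t)|^2>K\big\},
\]
and $\sigma_{\varepsilon,K}:=\sigma^{(1)}_{\varepsilon,K}\wedge\sigma^{(2)}_{\varepsilon,K}$. Then
\[
P\big(\sup_{t\in[0,\Upsilon]}\|R^\varepsilon_n\|^2>\delta\big)\le P\big(\sup_{t\in[0,\Upsilon\wedge\sigma_{\varepsilon,K}]}\|R^\varepsilon_n\|^2>\delta\big)+P\big(\sigma^{(1)}_{\varepsilon,K}<\Upsilon\big)+P\big(\sigma^{(2)}_{\varepsilon,K}<\Upsilon\big),
\]
and since $(|Y^\varepsilon_n|_V(\Upsilon))^2$ dominates both $\sup_t\|Y^\varepsilon_n\|^2$ and $\varepsilon\int_0^\Upsilon|AY^\varepsilon_n|^2dt$, the second term on the right is controlled by (\ref{eq-18}) and the third by Lemma \ref{lem-5}; given any $R>0$, one fixes $K$ so large that $\sup_{0<\varepsilon\le1}\varepsilon\log\big(P(\sigma^{(1)}_{\varepsilon,K}<\Upsilon)+P(\sigma^{(2)}_{\varepsilon,K}<\Upsilon)\big)\le-R$.

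On the stopped interval I would run the weighted energy estimate of Lemma \ref{lem-6}. With $l>0$ to be chosen and $\chi(t):=\exp\{-l\varepsilon\int_0^t(\|Y^\varepsilon_n\|^2|AY^\varepsilon_n|^2+\|Z^\varepsilon_n\|^2|AZ^\varepsilon_n|^2)ds\}$, apply It\^{o}'s formula to $\chi(t\wedge\sigma_{\varepsilon,K})\|R^\varepsilon_n(t\wedge\sigma_{\varepsilon,K})\|^2$ (justified by the $D(A)$-regularity of $Y^\varepsilon_n$ and of $Z^\varepsilon_n$, cf. \cite{D-G-T-Z}). The drift is handled term by term. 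Writing $AY^\varepsilon_n=AR^\varepsilon_n+AZ^\varepsilon_n$, the term $-2\varepsilon\langle AY^\varepsilon_n,AR^\varepsilon_n\rangle$ yields $-2\varepsilon|AR^\varepsilon_n|^2$ plus $-2\varepsilon\langle AZ^\varepsilon_n,AR^\varepsilon_n\rangle\le\frac{\varepsilon}{4}|AR^\varepsilon_n|^2+C\varepsilon|AZ^\varepsilon_n|^2$, and $|AZ^\varepsilon_n|^2\le K$ on $[0,\sigma_{\varepsilon,K}]$. For the nonlinearity I split $B(Y^\varepsilon_n,Y^\varepsilon_n)=\big(B(Y^\varepsilon_n,Y^\varepsilon_n)-B(Z^\varepsilon_n,Z^\varepsilon_n)\big)+B(Z^\varepsilon_n,Z^\varepsilon_n)$: the estimate (5.9) of \cite{D-G-T} bounds $|\langle B(Y^\varepsilon_n,Y^\varepsilon_n)-B(Z^\varepsilon_n,Z^\varepsilon_n),AR^\varepsilon_n\rangle|$ by $\frac14|AR^\varepsilon_n|^2+C\big(\|Y^\varepsilon_n\|^2|AY^\varepsilon_n|^2+\|Z^\varepsilon_n\|^2|AZ^\varepsilon_n|^2\big)\|R^\varepsilon_n\|^2$, and choosing $l>2C$ the cubic term is killed by the $\|R^\varepsilon_n\|^2\,d\chi$ produced by the weight; the remainder $B(Z^\varepsilon_n,Z^\varepsilon_n)$ satisfies $|B(Z^\varepsilon_n,Z^\varepsilon_n)|\le C\|Z^\varepsilon_n\|\,|AZ^\varepsilon_n|\le C(K)$ pathwise on $[0,\sigma_{\varepsilon,K}]$ (here $\gamma_n\in D(A)$ and Lemma \ref{lem-5} enter, via $\|Z^\varepsilon_n\|\le C|AZ^\varepsilon_n|$), so $-2\varepsilon\langle B(Z^\varepsilon_n,Z^\varepsilon_n),AR^\varepsilon_n\rangle\le\frac{\varepsilon}{4}|AR^\varepsilon_n|^2+C(K)\varepsilon$. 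Since $G$ is linear and bounded from $V$ to $H$, $-2\varepsilon\langle G(Y^\varepsilon_n),AR^\varepsilon_n\rangle\le\frac{\varepsilon}{4}|AR^\varepsilon_n|^2+C\varepsilon(1+\|Y^\varepsilon_n\|^2)\le\frac{\varepsilon}{4}|AR^\varepsilon_n|^2+C(K)\varepsilon$, and Hypothesis H gives $\varepsilon\|\psi(\varepsilon t,Y^\varepsilon_n)-\psi(\varepsilon t,Z^\varepsilon_n)\|^2_{\mathcal{L}_2(U;V)}\le\varepsilon L^2\|R^\varepsilon_n\|^2$. Absorbing the $|AR^\varepsilon_n|^2$ contributions into the left side and using $\|R^\varepsilon_n(0)\|=\|\gamma_n-\gamma_n\|=0$, one arrives at
\[
\chi(t\wedge\sigma_{\varepsilon,K})\|R^\varepsilon_n(t\wedge\sigma_{\varepsilon,K})\|^2\le\varepsilon C(K,\Upsilon)+\varepsilon L^2\int_0^{t\wedge\sigma_{\varepsilon,K}}\chi\|R^\varepsilon_n\|^2ds+\int_0^{t\wedge\sigma_{\varepsilon,K}}\chi\,dM_s,
\]
where $M$ is the stochastic integral.

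The rest is the routine machinery of Proposition \ref{lem-1} and Lemma \ref{lem-6}: take the $p$-th moment of the supremum, estimate the martingale by the moment inequality (\ref{eq-11}) (its quadratic variation is $\le\varepsilon L^2\int(\chi\|R^\varepsilon_n\|^2)^2ds$), apply Gronwall's inequality, and then pass from $\chi\|R^\varepsilon_n\|^2$ back to $\|R^\varepsilon_n\|^2$ using $\chi(\Upsilon\wedge\sigma_{\varepsilon,K})^{-1}\le e^{2lK^2}$, valid for small $\varepsilon$ because $\varepsilon\int\|Y^\varepsilon_n\|^2|AY^\varepsilon_n|^2ds\le K\cdot K$ and $\varepsilon\int\|Z^\varepsilon_n\|^2|AZ^\varepsilon_n|^2ds\le C(K)\varepsilon\Upsilon$ on $[0,\sigma_{\varepsilon,K}]$. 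This gives $\big(E\sup_{t\in[0,\Upsilon\wedge\sigma_{\varepsilon,K}]}\|R^\varepsilon_n\|^{2p}\big)^{1/p}\le C(K,\Upsilon)\,\varepsilon\,e^{C(\varepsilon^2+\varepsilon p)}$; choosing $p=2/\varepsilon$ and applying Chebyshev's inequality yields $\varepsilon\log P\big(\sup_{t\in[0,\Upsilon\wedge\sigma_{\varepsilon,K}]}\|R^\varepsilon_n\|^2>\delta\big)\le-2\log\delta+2\log\big(C(K,\Upsilon)\,\varepsilon\big)$, which tends to $-\infty$ as $\varepsilon\to0$. Combining with the stopping-time tails via $\varepsilon\log(a+b+c)\le\varepsilon\log3+\max\{\varepsilon\log a,\varepsilon\log b,\varepsilon\log c\}$ gives $\limsup_{\varepsilon\to0}\varepsilon\log P\big(\sup_{t\in[0,\Upsilon]}\|R^\varepsilon_n\|^2>\delta\big)\le-R$, and since $R$ is arbitrary, (\ref{eq-25}) follows.

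The step I expect to be the main obstacle is the control of the cubic contribution $C\varepsilon\|Y^\varepsilon_n\|^2|AY^\varepsilon_n|^2\|R^\varepsilon_n\|^2$ generated by the $V$-level energy identity. As the paper itself stresses, the 3D primitive nonlinearity has no cancellation in $V$, and $|AY^\varepsilon_n|$ is only known to be square-integrable in time (with $\varepsilon$-weighted bound $K$ after stopping), so this term can neither be absorbed by the dissipation nor handled by a Gronwall constant that stays bounded as $\varepsilon\to0$ once one passes to expectations; this is exactly what forces the exponential weight $\chi$ to be inserted already at the level of the It\^{o} formula, just as in Lemma \ref{lem-6}. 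The genuinely new term $B(Z^\varepsilon_n,Z^\varepsilon_n)$, absent from Lemma \ref{lem-6} where both processes solve the full equation, is by contrast harmless: the extra $D(A)$-regularity of $Z^\varepsilon_n$ inherited from $\gamma_n\in D(A)$ (Lemma \ref{lem-5}) makes it bounded pathwise after stopping, so it contributes only an $O(\varepsilon)$ term.
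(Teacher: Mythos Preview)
Your proof is correct and shares the same overall architecture as the paper's: localize via stopping times built from $(|Y^\varepsilon_n|_V(\Upsilon))^2$ and $\sup_t|AZ^\varepsilon_n|^2$, run a $V$-level energy estimate on the difference, control the stochastic integral by (\ref{eq-11}), apply Gronwall, set $p=2/\varepsilon$ and invoke Chebyshev, and finally remove the localization using (\ref{eq-18}) and Lemma \ref{lem-5}. The two arguments differ only in how the nonlinearity is decomposed and in whether the exponential weight is made explicit.

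The paper applies It\^o directly to $\|Y^\varepsilon_n-Z^\varepsilon_n\|^2$ (no $\chi$), splits $B(Y^\varepsilon_n,Y^\varepsilon_n)=B(Y^\varepsilon_n-Z^\varepsilon_n,Y^\varepsilon_n)+B(Z^\varepsilon_n,Y^\varepsilon_n)$, and derives ad hoc bounds for each piece; the part proportional to $\|Y^\varepsilon_n-Z^\varepsilon_n\|^2$ is then removed by a pathwise Gronwall step, which plays exactly the role of your weight. You instead carry over the weight $\chi$ from Lemma \ref{lem-6} verbatim and split $B(Y^\varepsilon_n,Y^\varepsilon_n)=\big(B(Y^\varepsilon_n,Y^\varepsilon_n)-B(Z^\varepsilon_n,Z^\varepsilon_n)\big)+B(Z^\varepsilon_n,Z^\varepsilon_n)$: the first summand is handled by (5.9) of \cite{D-G-T} so the cubic term is killed by $\chi$ exactly as before, and the second is bounded in $H$ pathwise because $Z^\varepsilon_n\in D(A)$ with $|AZ^\varepsilon_n|^2\le K$ after stopping. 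The gain of your route is that every forcing term left over is transparently $O(\varepsilon)$ on the stopped interval, so the $\varepsilon^2$ appearing before the final constant needs no further justification; in the paper's decomposition some of the residual forcing pieces (e.g.\ $|\partial_z Y^\varepsilon_n|^2|\nabla\partial_z Y^\varepsilon_n|^2$ and $\|T^\varepsilon_n\|^2|\Delta\mathbf{v}^\varepsilon_n|^2$) still contain $|AY^\varepsilon_n|^2$ without an accompanying $\|R^\varepsilon_n\|^2$ factor, and one has to track more carefully that the $\varepsilon$-weighted time integral produces the required smallness.
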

\begin{proof}
For any $K>0$ and $n$, define
\begin{eqnarray*}
\tau^{1,n}_{\varepsilon, K}&:=&\inf\Big\{t\geq 0: \|Y^\varepsilon_n(t)\|^2 >K, \ {\rm{or}}\ \varepsilon\int^t_0 |AY^\varepsilon_n(s)|^2ds >K\Big\},\\
\tau^{2,n}_{\varepsilon, K}&:=& \inf\Big\{t\geq 0: |AZ^\varepsilon_n(t)|^2 >K\Big\},\\
\tau^{n}_{\varepsilon, K}&:=&\tau^{1,n}_{\varepsilon, K}\wedge\tau^{2,n}_{\varepsilon, K},
\end{eqnarray*}
then, we have
\begin{eqnarray}\notag
&&P\Big(\sup_{t\in [0,\Upsilon]}\|Y^\varepsilon_n(t)-Z^\varepsilon_n(t)\|^2> \delta,\ (|Y^\varepsilon_n|_V(\Upsilon))^2\leq K,\ \sup_{t\in[0,\Upsilon]}|AZ^\varepsilon_n(t)|^2 \leq K\Big)\\
\label{e-16}
&\leq& P\Big(\sup_{t\in [0,\Upsilon\wedge \tau^{n}_{\varepsilon, K}]}\|Y^\varepsilon_n(t)-Z^\varepsilon_n(t)\|^2> \delta).
\end{eqnarray}
Applying It\^{o} formula to $\|Y^\varepsilon_n(t\wedge \tau^{n}_{\varepsilon, K})-Z^\varepsilon_n(t\wedge \tau^{n}_{\varepsilon, K})\|^2$, we obtain
\begin{eqnarray*}
&&\|Y^\varepsilon_n(t\wedge \tau^{n}_{\varepsilon, K})-Z^\varepsilon_n(t\wedge \tau^{n}_{\varepsilon, K})\|^2+2\varepsilon \int^{t\wedge \tau^{n}_{\varepsilon, K}}_0|A(Y^\varepsilon_n-Z^\varepsilon_n)|^2ds\\
&=&-2\varepsilon \int^{t\wedge \tau^{n}_{\varepsilon, K}}_0\langle AZ^\varepsilon_n, A(Y^\varepsilon_n-Z^\varepsilon_n)\rangle ds\\
&&-2\varepsilon \int^{t\wedge \tau^{n}_{\varepsilon, K}}_0\langle B(Y^\varepsilon_n,Y^\varepsilon_n), A(Y^\varepsilon_n-Z^\varepsilon_n)\rangle ds\\
&&-2\varepsilon \int^{t\wedge \tau^{n}_{\varepsilon, K}}_0\langle G(Y^\varepsilon_n), A(Y^\varepsilon_n-Z^\varepsilon_n)\rangle ds\\
&&+\varepsilon \int^{t\wedge \tau^{n}_{\varepsilon, K}}_0\|\psi(\varepsilon s,Y^\varepsilon_n)-\psi(\varepsilon s,Z^\varepsilon_n)\|^2_{\mathcal{L}_2(U;V)}ds\\
&&+2\sqrt{\varepsilon}\int^{t\wedge \tau^{n}_{\varepsilon, K}}_0\langle (\psi(\varepsilon s,Y^\varepsilon_n)-\psi(\varepsilon s,Z^\varepsilon_n))dW(s), \ A(Y^\varepsilon_n-Z^\varepsilon_n)\rangle.
\end{eqnarray*}
Note that
\[
\langle B(Y^\varepsilon_n,Y^\varepsilon_n), A(Y^\varepsilon_n-Z^\varepsilon_n)\rangle
=\langle B(Y^\varepsilon_n-Z^\varepsilon_n,Y^\varepsilon_n), A(Y^\varepsilon_n-Z^\varepsilon_n)\rangle
+\langle B(Z^\varepsilon_n,Y^\varepsilon_n), A(Y^\varepsilon_n-Z^\varepsilon_n)\rangle.
\]
By the Cauchy-Schwarz inequality and the Young's inequality, we have
\begin{eqnarray}\notag
&&|\langle B(Y^\varepsilon_n-Z^\varepsilon_n,Y^\varepsilon_n), \ A(Y^\varepsilon_n-Z^\varepsilon_n)\rangle|\\ \notag
&\leq& \frac{1}{8} |A(Y^\varepsilon_n-Z^\varepsilon_n)|^2+C\Big(|AY^\varepsilon_n|^2+|\frac{\partial T^\varepsilon_n}{\partial z}|^2|\nabla\frac{\partial T^\varepsilon_n}{\partial z}|^2\Big)\| \mathbf{v}^{\varepsilon}_n-Z^{1,\varepsilon}_n\|^2\\
\label{e-18}
&&+C|\frac{\partial \mathbf{v}^{\varepsilon}_n}{\partial z}|^2|\nabla\frac{\partial \mathbf{v}^{\varepsilon}_n}{\partial z}|^2,
\end{eqnarray}
and
\begin{eqnarray}\notag
&&|\langle B(Z^\varepsilon_n,Y^\varepsilon_n), A(Y^\varepsilon_n-Z^\varepsilon_n)\rangle|\\ \notag
&\leq& \frac{1}{8} |A(Y^\varepsilon_n-Z^\varepsilon_n)|^2+C|\nabla \mathbf{v}^{\varepsilon}_n ||\Delta \mathbf{v}^{\varepsilon}_n|\|Z^{1,\varepsilon}_n\|^2+C|\nabla Z^{1,\varepsilon}_n|^2|\Delta Z^{1,\varepsilon}_n |^2\\
\label{e-19}
&&+C|\frac{\partial Y^\varepsilon_n}{\partial z}|^2|\nabla\frac{\partial Y^\varepsilon_n}{\partial z}|^2+C\|T^\varepsilon_n\|^2|\Delta \mathbf{v}^{\varepsilon}_n|^2.
\end{eqnarray}
Based on (\ref{e-18}) and (\ref{e-19}), we deduce that
\begin{eqnarray*}
&&|\langle B(Y^\varepsilon_n,Y^\varepsilon_n), A(Y^\varepsilon_n-Z^\varepsilon_n)\rangle|\\
&\leq& \frac{1}{4}| A(Y^\varepsilon_n-Z^\varepsilon_n)|^2+C(|A Y^\varepsilon_n|^2+|\frac{\partial T^\varepsilon_n }{\partial z}|^2|\nabla \frac{\partial T^\varepsilon_n }{\partial z}|^2)\|\mathbf{v}^{\varepsilon}_n-Z^{1, \varepsilon}_n\|^2\\
&&+C|\frac{\partial Y^\varepsilon_n }{\partial z}|^2|\nabla \frac{\partial Y^\varepsilon_n }{\partial z}|^2
+C(\|Z^\varepsilon_n\|^2+\|T^\varepsilon_n\|^2)|\Delta \mathbf{v}^{\varepsilon}_n|^2+C|\nabla Z^{1,\varepsilon}_n |^2|\Delta Z^{1,\varepsilon}_n|^2.
\end{eqnarray*}
Moreover, by H\"{o}lder inequality and the Young's inequality, we get
\begin{eqnarray*}
|\langle G(Y^\varepsilon_n), A(Y^\varepsilon_n-Z^\varepsilon_n)\rangle|
&\leq& \frac{1}{4}|A(Y^\varepsilon_n-Z^\varepsilon_n)|^2+C\|Y^\varepsilon_n\|^2.
\end{eqnarray*}
Based on the above estimates and using Hypothesis H, we obtain
\begin{eqnarray*}
&&\|Y^\varepsilon_n(t\wedge \tau^{n}_{\varepsilon, K})-Z^\varepsilon_n(t\wedge \tau^{n}_{\varepsilon, K})\|^2+2\varepsilon \int^{t\wedge \tau^{n}_{\varepsilon, K}}_0|A(Y^\varepsilon_n-Z^\varepsilon_n)|^2ds\\
&\leq&2\varepsilon \int^{t\wedge \tau^{n}_{\varepsilon, K}}_0| AZ^\varepsilon_n| |A(Y^\varepsilon_n-Z^\varepsilon_n)|ds
+\varepsilon \int^{t\wedge \tau^{n}_{\varepsilon, K}}_0|A(Y^\varepsilon_n-Z^\varepsilon_n)|^2ds\\
&&+2\varepsilon C \int^{t\wedge \tau^{n}_{\varepsilon, K}}_0(|A Y^\varepsilon_n|^2+|\frac{\partial T^\varepsilon_n }{\partial z}|^2|\nabla \frac{\partial T^\varepsilon_n }{\partial z}|^2)\|Y^\varepsilon_n-Z^{ \varepsilon}_n\|^2ds\\
&&+2\varepsilon C \int^{t\wedge \tau^{n}_{\varepsilon, K}}_0 \Big[(\|Z^\varepsilon_n\|^2+\|T^\varepsilon_n\|^2)|\Delta \mathbf{v}^{\varepsilon}_n|^2+|\nabla Z^{1,\varepsilon}_n |^2|\Delta Z^{1,\varepsilon}_n|^2\Big]ds\\
&&+2\varepsilon C \int^{t\wedge \tau^{n}_{\varepsilon, K}}_0 |\frac{\partial Y^\varepsilon_n}{\partial z}|^2|\nabla\frac{\partial Y^\varepsilon_n}{\partial z}|^2ds\\
&&+\varepsilon C\int^{t\wedge \tau^{n}_{\varepsilon, K}}_0\|Y^\varepsilon_n\|^2 ds
+\varepsilon L^2 \int^{t\wedge \tau^{n}_{\varepsilon, K}}_0\|Y^\varepsilon_n - Z^\varepsilon_n\|^2ds\\
&&+2\sqrt{\varepsilon}\int^{t\wedge \tau^{n}_{\varepsilon, K}}_0\langle (\psi(\varepsilon s,Y^\varepsilon_n)-\psi(\varepsilon s,Z^\varepsilon_n))dW(s), \ A(Y^\varepsilon_n-Z^\varepsilon_n)\rangle.
\end{eqnarray*}
By the Cauchy-Schwarz inequality, it gives
\begin{eqnarray*}
&&\|Y^\varepsilon_n(t\wedge \tau^{n}_{\varepsilon, K})-Z^\varepsilon_n(t\wedge \tau^{n}_{\varepsilon, K})\|^2\\
&\leq&4\varepsilon \int^{t\wedge \tau^{n}_{\varepsilon, K}}_0| AZ^\varepsilon_n|^2ds
+2\varepsilon C \int^{t\wedge \tau^{n}_{\varepsilon, K}}_0(|A Y^\varepsilon_n|^2+|\frac{\partial T^\varepsilon_n }{\partial z}|^2|\nabla \frac{\partial T^\varepsilon_n }{\partial z}|^2)\|Y^\varepsilon_n-Z^{ \varepsilon}_n\|^2ds\\
&&+2\varepsilon C \int^{t\wedge \tau^{n}_{\varepsilon, K}}_0 [(\|Z^\varepsilon_n\|^2+\|T^\varepsilon_n\|^2)|\Delta \mathbf{v}^{\varepsilon}_n|^2+|\nabla Z^{1,\varepsilon}_n |^2|\Delta Z^{1,\varepsilon}_n|^2+|\frac{\partial Y^\varepsilon_n}{\partial z}|^2|\nabla\frac{\partial Y^\varepsilon_n}{\partial z}|^2]ds\\
&&+\varepsilon C\int^{t\wedge \tau^{n}_{\varepsilon, K}}_0\|Y^\varepsilon_n\|^2 ds
+\varepsilon L^2 \int^{t\wedge \tau^{n}_{\varepsilon, K}}_0\|Y^\varepsilon_n - Z^\varepsilon_n\|^2ds\\
&&+2\sqrt{\varepsilon}\int^{t\wedge \tau^{n}_{\varepsilon, K}}_0\langle (\psi(\varepsilon s,Y^\varepsilon_n)-\psi(\varepsilon s,Z^\varepsilon_n))dW(s), \ A(Y^\varepsilon_n-Z^\varepsilon_n)\rangle .
\end{eqnarray*}
Let
\[
M_t:=2\sqrt{\varepsilon}\int^{t\wedge \tau^{n}_{\varepsilon, K}}_0\langle (\psi(\varepsilon s,Y^\varepsilon_n)-\psi(\varepsilon s,Z^\varepsilon_n))dW(s),\ A(Y^\varepsilon_n-Z^\varepsilon_n)\rangle,\quad M^*_\Upsilon:=\sup_{t\in[0,\Upsilon\wedge \tau^{n}_{\varepsilon, K}]}|M_t|.
\]
Applying Gronwall inequality, we have
\begin{eqnarray*}
&&\|Y^\varepsilon_n(t\wedge \tau^{n}_{\varepsilon, K})-Z^\varepsilon_n(t\wedge \tau^{n}_{\varepsilon, K})\|^2\\
&\leq&\Big(4\varepsilon \int^{t\wedge \tau^{n}_{\varepsilon, K}}_0| AZ^\varepsilon_n|^2ds
+2\varepsilon C \int^{t\wedge \tau^{n}_{\varepsilon, K}}_0 [(\|Z^\varepsilon_n\|^2+\|T^\varepsilon_n\|^2)|\Delta \mathbf{v}^{\varepsilon}_n|^2
+|\nabla Z^{1,\varepsilon}_n |^2|\Delta Z^{1,\varepsilon}_n|^2+|\frac{\partial Y^\varepsilon_n}{\partial z}|^2|\nabla\frac{\partial Y^\varepsilon_n}{\partial z}|^2]ds\\
&&+\varepsilon C\int^{t\wedge \tau^{n}_{\varepsilon, K}}_0\|Y^\varepsilon_n\|^2 ds+M^*_\Upsilon\Big)
 \cdot \exp\Big\{2\varepsilon C \int^{t\wedge \tau^{n}_{\varepsilon, K}}_0(|A Y^\varepsilon_n|^2+|\frac{\partial T^\varepsilon_n }{\partial z}|^2|\nabla \frac{\partial T^\varepsilon_n }{\partial z}|^2)ds+\varepsilon L^2t\Big\}.
\end{eqnarray*}
By (\ref{eq-11}),
\begin{eqnarray}\notag
(E|M^*_\Upsilon|^{p})^{\frac{1}{p}}
&\leq& \frac{1}{2}(E\sup_{t\in [0,\Upsilon]}\|Y^\varepsilon_n-Z^\varepsilon_n\|^{2p})^{\frac{1}{p}}+2C\sqrt{\varepsilon p}L\Big(E\big(\int^\Upsilon_0\|Y^\varepsilon_n-Z^\varepsilon_n\|^2ds\big)^p\Big)^{\frac{1}{p}}.
\end{eqnarray}
Utilizing H\"{o}lder inequality, it yields
\begin{eqnarray*}
&&\Big(E\sup_{s\in[0,\Upsilon\wedge \tau^{n}_{\varepsilon, K}]}\|Y^\varepsilon_n(s)-Z^\varepsilon_n(s)\|^{2p}\Big)^{\frac{2}{p}}\\
&\leq&{\rm{e}}^{4\varepsilon CK+4\varepsilon CK^2+2\varepsilon L\Upsilon}\Big[16\varepsilon^2\Big(E( \int^{\Upsilon\wedge \tau^{n}_{\varepsilon, K}}_0| AZ^\varepsilon_n|^2ds)^p\Big)^{\frac{2}{p}}\\
&&
+4\varepsilon^2 C \left(E\big(\int^{\Upsilon\wedge \tau^{n}_{\varepsilon, K}}_0 \Big((\|Z^\varepsilon_n\|^2+\|T^\varepsilon_n\|^2)|\Delta \mathbf{v}^{\varepsilon}_n|^2
+|\nabla Z^{1,\varepsilon}_n |^2|\Delta Z^{1,\varepsilon}_n|^2+|\frac{\partial Y^\varepsilon_n}{\partial z}|^2|\nabla\frac{\partial Y^\varepsilon_n}{\partial z}|^2\Big)ds\big)^p\right)^{\frac{2}{p}}\\
&&+\varepsilon^2C\Big(E\big( \int^{\Upsilon\wedge \tau^{n}_{\varepsilon, K}}_0\|Y^\varepsilon_n\|^2 ds\big)^p\Big)^{\frac{2}{p}}+8C\varepsilon pL^2\int^\Upsilon_0\Big(E\sup_{r\in[0,s\wedge \tau^{n}_{\varepsilon, K}]}\|Y^\varepsilon_n(r)-Z^\varepsilon_n(r)\|^{2p}\Big)^{\frac{2}{p}}ds\Big]\\
&\leq&{\rm{e}}^{4\varepsilon CK+4\varepsilon CK^2+2\varepsilon L\Upsilon}\Big[64\varepsilon^2 C K^4+\varepsilon^2CK^2
+8C\varepsilon pL^2\int^\Upsilon_0(E\sup_{r\in[0,s\wedge \tau^{n}_{\varepsilon, K}]}\|Y^\varepsilon_n(r)-Z^\varepsilon_n(r)\|^{2p})^{\frac{2}{p}}ds\Big].
\end{eqnarray*}
Applying Gronwall inequality again, we get
\begin{eqnarray}\notag
&&(E\sup_{s\in[0,\Upsilon\wedge \tau^{n}_{\varepsilon, K}]}\|Y^\varepsilon_n(s)-Z^\varepsilon_n(s)\|^{2p})^{\frac{2}{p}} \\
\label{e-20}
&\leq&{\rm{e}}^{4\varepsilon CK+4\varepsilon CK^2+2\varepsilon L\Upsilon}(64\varepsilon^2 C K^4+\varepsilon^2CK^2)\cdot \exp\Big\{8C\varepsilon p\Upsilon L^4 {\rm{e}}^{4\varepsilon CK+4\varepsilon CK^2+2\varepsilon L\Upsilon}\Big\}.
\end{eqnarray}
Choosing $p=\frac{2}{\varepsilon}$ in (\ref{e-20}), we have for any $n$ and $K>0$,
\begin{eqnarray}\notag
&&\varepsilon \log P(\sup_{t\in[0,\Upsilon\wedge \tau^{n}_{\varepsilon, K}]}\|Y^\varepsilon_n(t)-Z^\varepsilon_n(t)\|^2>\delta)\\ \notag
&\leq& \log(E\sup_{s\in[0,\Upsilon\wedge \tau^{n}_{\varepsilon, K}]}\|Y^\varepsilon_n(s)-Z^\varepsilon_n(s)\|^{2p})^{\frac{2}{p}} -\log \delta^2\\ \notag
&\leq& 4\varepsilon CK+4\varepsilon CK^2+2\varepsilon L\Upsilon+\log(64\varepsilon^2 C K^4+\varepsilon^2CK^2)\\ \notag
&&+8C\varepsilon p\Upsilon L^2 {\rm{e}}^{4\varepsilon CK+4\varepsilon CK^2+2\varepsilon L\Upsilon}-\log \delta^2\\
\label{e-21}
&\rightarrow& -\infty \quad as\ \varepsilon \rightarrow 0.
\end{eqnarray}
Recall (\ref{eq-18}), for any $R>0$, there exists a positive constant $M_1$ such that
 \begin{eqnarray}\label{eq-27}
\sup_{0<\varepsilon \leq 1}\varepsilon \log P((|Y^\varepsilon_n|_V(\Upsilon))^2>M_1)&\leq& -R \quad \forall n.
\end{eqnarray}
Moreover, for such $R>0$, by Lemma \ref{lem-5}, there exists a positive constant $M_2$ such that
\begin{eqnarray}\label{eq-28}
\sup_{0<\varepsilon \leq 1}\varepsilon \log P(\sup_{t\in [0,\Upsilon]}|AZ^\varepsilon_n|^2>M_2)&\leq& -R\quad \forall n,
\end{eqnarray}
 and by (\ref{e-16}), (\ref{e-21}), there exists $\varepsilon_0$ such that for any $0<\varepsilon\leq \varepsilon_0$,
\begin{eqnarray}\label{e-22}
\varepsilon \log P(\sup_{t\in[0,\Upsilon]}\|Y^\varepsilon_n(t)-Z^\varepsilon_n(t)\|^2>\delta,\ (|Y^\varepsilon_n|(\Upsilon))^2\leq K,\ \sup_{t\in [0,\Upsilon]}|AZ^\varepsilon_n|^2\leq K )
\leq  -R \quad \forall n, \quad \forall K.
\end{eqnarray}
Let $K$ in (\ref{e-22}) be $M:=M_1\vee M_2$. From (\ref{eq-27})-(\ref{e-22}), we deduce that for any $n$ and $0<\varepsilon\leq \varepsilon_0$,
 \begin{eqnarray*}
 &&\varepsilon \log P(\sup_{t\in[0,\Upsilon]}\|Y^\varepsilon_n(t)-Z^\varepsilon_n(t)\|^2>\delta)\\
 &\leq& \varepsilon \log P(\sup_{t\in[0,\Upsilon]}\|Y^\varepsilon_n(t)-Z^\varepsilon_n(t)\|^2>\delta,\ (|Y^\varepsilon_n|_V(\Upsilon))^2\leq M,\ \sup_{t\in [0,\Upsilon]}|AZ^\varepsilon_n|^2\leq M)\\
 &&+\varepsilon \log P( (|Y^\varepsilon_n|_V(\Upsilon))^2> M)+\varepsilon \log P(\sup_{t\in [0,\Upsilon]}|AZ^\varepsilon_n|^2> M)\\
 &\leq & -3R.
\end{eqnarray*}
Since $R$ is arbitrary, we complete the proof.
\end{proof}

Up to now, we are ready to prove (\ref{eq-8}).

\begin{flushleft}
\textbf{Proof of (\ref{eq-8}).}\quad From Lemma \ref{lem-7} and Lemma \ref{lem-6}, for any $R>0$, there exists a positive constant $N_0$ satisfying
 \begin{eqnarray*}
 P(\sup_{t\in[0,\Upsilon]}\|Z^\varepsilon(t)-Z^\varepsilon_{N_0}(t)\|^2>\delta)
\leq {\rm{e}}^{-\frac{R}{\varepsilon}} \quad \forall  \ \varepsilon \in (0,1],
\end{eqnarray*}
and
\begin{eqnarray*}
 P(\sup_{t\in[0,\Upsilon]}\|Y^\varepsilon(t)-Y^\varepsilon_{N_0}(t)\|^2>\delta)
\leq {\rm{e}}^{-\frac{R}{\varepsilon}} \quad \forall \ \varepsilon \in (0,1].
\end{eqnarray*}
Moreover, in view of Lemma \ref{lem-8}, for such $N_0$, there exists $\varepsilon_0$ such that for any $0<\varepsilon\leq \varepsilon_0$,
 \begin{eqnarray*}
 P(\sup_{t\in[0,\Upsilon]}\|Y^\varepsilon_{N_0}(t)-Z^\varepsilon_{N_0}(t)\|^2>\delta)
\leq {\rm{e}}^{-\frac{R}{\varepsilon}}.
\end{eqnarray*}
As a result of the previous inequalities and by the triangle inequality, we have for any $0<\varepsilon\leq \varepsilon_0$,
\begin{eqnarray*}
 P(\sup_{t\in[0,\Upsilon]}\|Y^\varepsilon(t)-Z^\varepsilon(t)\|^2>\delta)
\leq 3 {\rm{e}}^{-\frac{R}{\varepsilon}}.
\end{eqnarray*}
Since $R$ is arbitrary, we conclude that
\begin{eqnarray*}
 \lim_{\varepsilon\rightarrow0}\varepsilon \log P(\sup_{t\in[0,\Upsilon]}\|Y^\varepsilon(t)-Z^\varepsilon(t)\|^2>\delta)=-\infty.
\end{eqnarray*}
\end{flushleft}
$\hfill\blacksquare$

\noindent{\small {\bf  Acknowledgements}\   This work was supported by National Natural Science Foundation of China (NSFC) (No. 11431014)},
Key Laboratory of Random Complex Structures and Data Science, Academy of Mathematics and Systems Science, Chinese Academy of Sciences(No. 2008DP173182).

\def\refname{ References}

\end{document}